\newtheoremstyle{definition}
{10pt}
{10pt}
{}
{}
{\bfseries}
{}
{.5em}
{}
\newtheoremstyle{plain}
{10pt}
{10pt}
{\itshape}
{}
{\bfseries}
{}
{.5em}
{}
\theoremstyle{plain}	
\newtheorem{thm}{Theorem}[section]
\newtheorem{lem}[thm]{Lemma}
\newtheorem{prop}[thm]{Proposition}
\newtheorem{cor}[thm]{Corollary}
\theoremstyle{definition}	
\newtheorem{definition}[thm]{Definition}
\newtheorem{remark}[thm]{Remark}
\newcommand{\RR}{\mathbb{R}}      
\newcommand{\vol}{\operatorname{vol}}
\newcommand{\Po}{\operatorname{Po}}
\newcommand{\intt}{\operatorname{int}}
\newcommand{\cl}{\operatorname{cl}}
\def\BB{\mathbb{B}}
\def\EE{\mathbb{E}}
\def\NN{\mathbb{N}}
\def\PP{\mathbb{P}}
\def\RR{\mathbb{R}}
\def\SS{\mathbb{S}}
\def\BBd{\mathbb{B}^d}
\def\SSd{\mathbb{S}^{d-1}}
\def\g{\gamma}
\def\la{\lambda}
\def\bv{\mathbf{v}}
\def\bx{\mathbf{x}}
\def\cB{\mathcal{B}}
\def\cC{\mathcal{C}}
\def\cF{\mathcal{F}}
\def\cM{\mathcal{M}}
\def\cP{\mathcal{P}}
\def\cX{\mathcal{X}}
\def\dint{\textup{d}}
\def\var{{\textup{var}}}
\def\cl{{\rm cl}}
\def\cone{{\rm cone}}
\def\ext{{\rm ext}}
\def\setk{\llbracket k\rrbracket}
\begin{document}

\title{\bfseries Gaussian polytopes:\ a cumulant-based approach}

\author{Julian Grote\footnotemark[1]\ \footnotemark[2]\ \ and Christoph Th\"ale\footnotemark[3]}

\date{}
\renewcommand{\thefootnote}{\fnsymbol{footnote}}
\footnotetext[1]{Ruhr University Bochum, Faculty of Mathematics, D-44780 Bochum, Germany. E-mail: julian.grote@rub.de}

\footnotetext[2]{The first author has been supported by the German Research Foundation (DFG) via Research Training Group RTG 2131 \textit{High dimensional Phenomena in Probability -– Fluctuations and Discontinuity}.}

\footnotetext[3]{Ruhr University Bochum, Faculty of Mathematics, D-44780 Bochum, Germany. E-mail:  christoph.thaele@rub.de}

\maketitle

\begin{abstract}
The random convex hull of a Poisson point process in $\mathbb{R}^d$ whose intensity measure is a multiple of the standard Gaussian measure on $\mathbb{R}^d$ is investigated. The purpose of this paper is to invent a new viewpoint on these Gaussian polytopes that is based on cumulants and the general large deviation theory of Saulis and Statulevi\v{c}ius. This leads to new and powerful concentration inequalities, moment bounds, Marcinkiewicz-Zygmund-type strong laws of large numbers, central limit theorems and moderate deviation principles for the volume and the face numbers. Corresponding results are also derived for the empirical measures induced by these key geometric functionals, taking thereby care of their spatial profiles.
\bigskip
\\
{\bf Keywords}. {Convex hulls, cumulants, concentration inequalities, large deviation probabilities, Gaussian polytopes, moderate deviation principles, Marcinkiewicz-Zygmund-type strong laws of large numbers, random polytopes, stochastic geometry.}\\
{\bf MSC}. Primary 52A22, 60F10; Secondary 52B05, 60D05, 60F15, 60G55.
\end{abstract}

\tableofcontents

\section{Introduction and main results}

\subsection{General introduction}

Random polytopes are among the most central objects studied in stochastic geometry, a branch of mathematics at the borderline between convex geometry and probability. One class of random polytopes that has attracted particular interest is that of Gaussian polytopes which arise as convex hulls of a collection of independent random points that are distributed in $\RR^d$ according to the standard Gaussian law, see the survey articles of Hug \cite{HugSurvey} and Reitzner \cite{ReitznerSurvey}. 

Gaussian polytopes are highly relevant in asymptotic convex geometry or the local theory of Banach spaces. Since the breakthrough paper of Gluskin \cite{Gluskin} it has been realized and is nowadays well known that Gaussian polytopes -- and their close relatives -- often serve as extremizers in geometric or analytic problems. For example, consider the two random convex hulls ${\rm conv}([-1,1]^d\cup\{\pm X_1^{(i)},\ldots,\pm X_d^{(i)}\})$, $i\in\{1,2\}$, formed by the union of $[-1,1]^d$ with two independent and symmetrized Gaussian polytopes in $\RR^d$, respectively, that arise from independent standard Gaussian points $X_1^{(i)},\ldots,X_d^{(i)}$, $i\in\{1,2\}$. Then, with probability tending to $1$ exponentially fast as the space dimension $d$ tends to infinity, these two random convex hulls -- or, equivalently, the random $d$-dimensional normed spaces that have these polytopes as their respective unit balls -- have Banach-Mazur distance bounded from below by a constant multiple of $d$. The value $d$ also constitutes an upper bound for this quantity by the classical John's theorem. For a generalization of this result to certain sub-Gaussian polytopes we refer to the work of Lata\l a, Mankiewicz, Oleszkiewicz and Tomczak-Jaegermann \cite{LatalaeEtAl} and also to the survey of Mankiewicz and Tomczak-Jaegermann \cite{MankiewiczHANDBOOK}. Further extremality results in this context are due to Gluskin and Litvak \cite{GluskinLitvak} or Szarek \cite{Szarek}, to name just a few.

In addition, Gaussian polytopes are prototypical examples of random convex sets that are known to satisfy the (probabilistic version of the) celebrated hyperplane conjecture. More precisely, it was shown by Klartag and Kozma \cite{KK} that the isotropic constant of the convex hull of $n\geq d+1$ independent Gaussian random points in $\RR^d$ is bounded by a universal constant with probability at least $1-e^{-cd}$, where $c\in (0,\infty)$ is another universal constant. In other words, Gaussian polytopes satisfy the hyperplane conjecture asymptotically almost surely, as $d\to\infty$. For other random polytope models satisfying this form of the hyperplane conjecture we refer to the works of Alonso-Guti\'errez \cite{AlonsoGutierrez.2008}, Dafnis, Gu\'edon and Giannopoulos \cite{DafnisGiannopGuedon}, H\"orrmann, Hug, Reitzner and Th\"ale \cite{HHRT} and H\"orrmann, Prochno and Th\"ale \cite{HPT}.

Gaussian polytopes are also of interest in some branches of coding theory because of the following interpretation that has been derived by Baryshnikov and Vitale \cite{BaryshnikovVitale}. Fix $n\geq d+1$ and let $\Delta_{n}$ be a regular simplex in $\RR^{n}$. Now, take a random rotation $\varrho$ in $\RR^{n}$ and let ${\rm pr}_d^{n}:\RR^n\to\RR^d$ be the projection onto the first $d$ coordinates. Then, up to an affine transformation, the randomly rotated and projected simplex ${\rm pr}_d^{n}(\varrho(\Delta_{n}))$ has the same distribution as a Gaussian polytope that arises as the convex hull of $n+1$ standard Gaussian random points in $\RR^d$. In the context of coding theory it is of interest whether the randomly rotated and projected simplices ${\rm pr}_d^{n}(\varrho(\Delta_{n}))$ are $k$-neighbourly in that the convex hull of the projection of $k$ vertices of $\Delta_{n}$ is always a $(k-1)$-face of ${\rm pr}_d^{n}(\varrho(\Delta_{n}))$. It turns out that, as $n\to\infty$, this is indeed the case, at least as long as $k$ and $d$ are both proportional to $n$. For more details in this direction we refer to the works of Candes, Rudelson, Tao and Vershynin \cite{CandesRudelsonTaoVershynin}, Candes and Tao \cite{CandesTao}, Donoho and Tanner \cite{DonohoTanner1,DonohoTanner2,DonohoTanner3}, and Vershik and Sporyshev \cite{VershikSporyshev}. 

Bearing in mind that spatial data is often being assumed to follow a Gaussian law, Gaussian polytopes have a clear relevance also in the area of multivariate statistics. For example, the vertices of a Gaussian polytope can be regarded as the multivariate extremes of the underlying sample. More generally, the convex hull of a spatial point configuration is also related to the notion of convex hull peeling data depth that allows an ordering of spatial data. For more information on this point we refer to the survey article of Cascos \cite{Cascos}.

\medskip

The purpose of the present paper is to introduce a new probabilistic viewpoint of Gaussian polytopes in $\RR^d$ that allows to gain new insights into their large scale asymptotic geometry. It is based on sharp bounds for cumulants and the large deviation theory of Saulis and Statulevi\v{c}ius. By means of these techniques we will derive a number of new and powerful results that were not within the reach of other methods available before.  The geometric characteristics we consider are on the one hand related to the metric and on the other hand related to the combinatorial structure of Gaussian polytopes and given by the volume as well as the face numbers, respectively. Using an underlying binomial point process in the construction of Gaussian polytopes, that is, a fixed number of independent random points that are distributed according to the standard Gaussian law in $\RR^d$, the expectation asymptotics of these parameters have been determined in a series of papers. It starts with the classical work of R\'enyi and Sulanke \cite{RenyiSulanke} for the vertex number in the planar case $d=2$ and is continued by the papers of Affentranger \cite{Affentranger} and Affentranger and Schneider \cite{AffentrangerSchneider}, dealing with the face numbers in higher dimensions. An integral-geometric approach to mean values for Gaussian polytopes has been developed by Hug, Munsonius and Reitzner \cite{HugMunsoniusReitzner}. Hug and Reitzner \cite{HugReitzner} also derived variance upper bounds and used them to show laws of large numbers. The central limit problem for Gaussian polytopes has first been treated by Hueter \cite{Hueter94,Hueter99} and finally been settled in the breakthrough paper of B\'ar\'any and Vu \cite{BaranyVu}. 

Throughout the present text we will assume that the random polytopes we deal with are generated as convex hulls of a Poisson point process whose intensity measure is given by a multiple $\la>0$ of the standard Gaussian measure in $\RR^d$. Iniciated in \cite{BaranyVu}, the line of research under this further randomization has recently been taken up in the remarkable work of Calka and Yukich \cite{CalkaYukich}, who computed the precise variance asymptotics and considered scaling limits by means of a scaling transformation that has its origins in previous works of Calka, Schreiber and Yukich \cite{CalkaSchreiberYukich} and Schreiber and Yukich \cite{SchreiberYukich} on random polytopes in the unit ball. We use this scaling transformation and its properties to obtain further probabilistic results for Gaussian polytopes generated by a Poisson point process. In particular, these are
\begin{itemize}
\item[--] concentration inequalities (Theorem \ref{ConcentrationEinleitung} and Theorem \ref{thm:ConcentrationEmpMeas}),
\item[--] bounds for the growth of moments and cumulants of all orders (Theorem \ref{MomentboundEinleitung} and Theorem \ref{cumulantestimate}),
\item[--] Marcinkiewicz-Zygmund-type strong laws of large numbers (Theorem \ref{StarkesGesetzVolumen}),
\item[--] central limit theorems and bounds on the relative error in the central limit theorems (Theorem \ref{DeviationprobabilityEinleitung} and Theorem \ref{thm:DeviationsEmpMeas}),
\item[--] moderate deviation principles (Theorem \ref{ModeratedeviationsEinleitung}, Theorem \ref{moderatedeviations} and Theorem \ref{thm:MDPMeasureGeneral})
\end{itemize}
for the key geometric characteristics discussed above as well as their measure-valued counterparts. We emphasize that the latter have the advantage to capture the spatial profile of the functionals we consider as well and not only their total masses. This naturally continues the recent line of research on the probabilistic analysis of large (or high-dimensional) geometric random systems.

\medbreak

Let us briefly outline the structure and the content of this paper. In Section \ref{subsec:MainResults} we present our main findings for the volume and the face numbers of Gaussian polytopes. Some preliminaries, especially the scaling transformation from \cite{CalkaYukich} as well as the measure-valued versions of our functionals are described in Section \ref{sec:Preliminaries}. We also introduce there a cluster measure representation of their cumulant measures, that will turn out to be crucial for later purposes. The main results for the empirical measures are stated in Section \ref{sec:MainResultsEmpirical}, while their proofs as well as the proofs of the results in Section 1.2 are the content of Section \ref{sec:ProofMain}. Their proofs are based on two auxiliary estimates. The first one is a moment bound that we provide in Section \ref{sec:MomentEstimates}, while the final Section \ref{sec:CumulantProof} contains the proof of the second ingredient, a cumulant bound, which is the main technical device we develop and apply in our text.

\subsection{Statement of the main results}\label{subsec:MainResults}

Fix a space dimension $d\ge 2$ and let $\gamma_d$ be the standard Gaussian measure on $\RR^d$ that has density
\begin{align}\label{DefinitionPhi}
\phi(x) := (2\pi)^{-d/2}\, \exp\big(-\|x\|^2/2\big), \qquad x\in \RR^d,
\end{align}
with respect to the Lebesgue measure on $\RR^d$. Here and in what follows, $\|\,\cdot\,\|$ stands for the Euclidean norm. By $\cP_\la$ we denote a Poisson point process on $\RR^d$ with intensity measure $\lambda\gamma_d$, $\la>0$. That is, if $N(\la)$ is a Poisson random variable with mean $\la$, $\cP_\la$ is a random set that consists of $N(\la)$ points in $\RR^d$ that are independently chosen according to the Gaussian law $\g_d$. By $K_\la$ we denote the Gaussian polytope that arises as the convex hull of $\cP_\la$. The volume and the number of $j$-dimensional faces of $K_\la$, $j\in\{0,\ldots,d-1\}$, are denoted by $\vol(K_\la)$ and $f_j(K_\la)$, respectively.

\medskip

We can now present our main results.  Let us start with a new and powerful concentration inequality for the volume $\vol(K_\la)$ and the number $f_j(K_\la)$ of $j$-dimensional faces. To streamline our presentation, we define the individual weights $z[f_0]:=d$ and $z[f_j]:=0$ if $j\in\{1,\ldots,d-1\}$. Here and in what follows, writing that a statement holds \textit{for sufficiently large $\la$} means that there exists a $\la_0>0$, depending on the dimension $d$ and the geometric functional under consideration, such that the statement is valid for all $\la\ge \la_0$.

\begin{thm}[Concentration inequalities]\label{ConcentrationEinleitung}
	\begin{description}
	\item[(i)] Let $y\geq 0$. Then,
		\begin{align*}
			&\PP\big(|\vol(K_\lambda)-\EE [\vol(K_\la)]|\geq y\, \sqrt{\var [\vol(K_\la)]}\,\big)\\
			&\qquad \qquad \le 2\exp\Big(-{1\over 4}\min\Big\{{y^2\over 2^{3d+5}},c_1\,  (\log \la)^{d-1\over 4(3d+5)}\, y^{1\over 3d+5}\Big\}\Big)
			\end{align*}
			for all sufficiently large $\la$ with a constant $c_1\in(0,\infty)$ only depending on $d$.	
	\item[(ii)] Let $y\geq 0$ and $j\in \{0,\ldots,d-1\}$. Then, 
		\begin{align*}
		&\PP\big(|f_j(K_\lambda)-\EE [f_j(K_\la)]|\geq y\, \sqrt{\var [f_j(K_\la)]}\,\big)\\
		&\qquad \qquad \le 2\exp\Big(-{1\over 4}\min\Big\{{y^2\over 2^{j(3d+1) + 5 + z[f_j]}},c_2\, (\log \la)^{d-1\over 4(j(3d+1) + 5 + z[f_j])}\, y^{1\over j(3d+1) + 5 + z[f_j]}\Big\}\Big)
		\end{align*}
	for all sufficiently large $\la$ with a constant $c_2\in(0,\infty)$ only depending on $d$ and $j$.
	\end{description}
\end{thm}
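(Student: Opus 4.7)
The plan is to deduce Theorem~\ref{ConcentrationEinleitung} from the general concentration lemma of Saulis and Statulevi\v{c}ius by feeding it the sharp cumulant bound that is stated later as Theorem~\ref{cumulantestimate}. My strategy is therefore (i) to treat that cumulant estimate as a black box, and (ii) to match its parameters against the Saulis--Statulevi\v{c}ius inequality, which converts polynomial growth of cumulants into sub-exponential concentration of the shape appearing in the statement.

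Concretely, recall that whenever a random variable $Y$ with variance $\s^2>0$ satisfies $|\cum_k(Y)|\le(k!)^{1+\gamma}\,\s^2/\Delta^{k-2}$ for every integer $k\ge 2$, the Saulis--Statulevi\v{c}ius lemma yields
\begin{align*}
	\PP\bigl(|Y-\EE Y|\ge y\s\bigr)\le 2\exp\Bigl(-\tfrac14\min\bigl\{y^2/2^{1+\gamma},\,(y\Delta/\s)^{1/(1+\gamma)}\bigr\}\Bigr),\qquad y\ge 0.
\end{align*}
For part~(i) I would apply this with $Y=\vol(K_\l)$. Theorem~\ref{cumulantestimate} supplies the value $1+\gamma=3d+5$ together with a rate $\Delta_\l$ such that $\Delta_\l/\sqrt{\var[\vol(K_\l)]}\asymp(\log\l)^{(d-1)/4}$ for all sufficiently large $\l$. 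The first term in the minimum is then exactly $y^2/2^{3d+5}$, while $(y\Delta_\l/\sqrt{\var[\vol(K_\l)]})^{1/(3d+5)}$ produces the claimed factor $(\log\l)^{(d-1)/(4(3d+5))}\,y^{1/(3d+5)}$ up to a multiplicative constant which I absorb into $c_1$. Part~(ii) is obtained by exactly the same reduction, carried out separately for each $j\in\{0,\ldots,d-1\}$: the cumulant bound now delivers the $j$-dependent exponent $1+\gamma=j(3d+1)+5+z[f_j]$ and a matching rate whose normalised power of $\log\l$ is again $(d-1)/4$, with the additional weight $z[f_0]=d$ encoding the distinct variance scaling of the vertex number.

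Since both parts of the theorem are thus obtained by a parameter-matching step, the main work is not here but in Theorem~\ref{cumulantestimate} itself, whose proof I expect to be the real obstacle. The strategy there, to be executed in Section~\ref{sec:CumulantProof}, is to exploit the Calka--Yukich scaling transformation to rewrite $\vol(K_\l)$ and $f_j(K_\l)$ as sums of stabilising scores on a rescaled Poisson process, to represent the $k$-th cumulants as integrals against the associated cluster measures, and to close the estimate via quantitative exponential decay-of-correlation bounds for these scores that hold uniformly in $\l$. Once that technical core is in place, Theorem~\ref{ConcentrationEinleitung} follows by the purely algebraic plug-in described above.
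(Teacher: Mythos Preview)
Your proposal is correct and follows essentially the same route as the paper: the concentration inequality is obtained by plugging the cumulant bound of Theorem~\ref{cumulantestimate} into the Saulis--Statulevi\v{c}ius lemma (stated in the paper as Lemma~\ref{VorbereitungKumulante}), with $\gamma=3d+4$ for the volume and $\gamma=j(3d+1)+4+z[f_j]$ for the face numbers, and $\Delta_\l$ of order $(\log\l)^{(d-1)/4}$ coming from the cumulant bound together with the B\'ar\'any--Vu variance lower bound. One small correction: the extra weight $z[f_0]=d$ does not encode a distinct variance scaling of the vertex number (all $f_j$ have variance of order $(\log\l)^{(d-1)/2}$) but rather reflects that the cumulant exponent in Theorem~\ref{cumulantestimate} for $\xi_{f_0}$ is $d+5$, which only matches the unified form $j(3d+1)+5+z[\xi]$ when $j=0$ if one inserts this additive correction.
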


\begin{remark}
\begin{itemize}
\item[(i)] The concentration inequalities in the previous theorem can be represented in an alternative form that avoids the minimum in the exponential exponent. For example, the volume of $K_\la$ satisfies
\begin{align*}
& \PP\big(|\vol(K_\lambda)-\EE [\vol(K_\la)]|\geq y\, \sqrt{\var [\vol(K_\la)]}\,\big)\\
&\qquad\qquad \leq 2\,\exp\Bigg(-{y^2\over 2\big(2^{3d+5}+c\, y(\log\la)^{(d-1)/(12(2d+3))}\big)^{3(2d+3)/(3d+5)}}\Bigg)
\end{align*}
with a constant $c\in(0,\infty)$ only depending on $d$ for all $y\geq 0$ and sufficiently large $\la$. This follows by combining the estimates we obtain in the proof of Theorem \ref{ConcentrationEinleitung} with \cite[Lemma 2.4]{SaulisBuch}. However, it turns out that this form is less suitable for our purposes compared to that of Theorem \ref{ConcentrationEinleitung}.
\item[(ii)] For small arguments $y$ the Gaussian exponent $-y^2$ is already optimal. To improve the (presumably non-optimal) exponent for larger values of $y$ by our method, which is based on sharp bounds for cumulants, one would need to improve the cumulant bound in Theorem \ref{cumulantestimate}. This point will further be discussed in Remark \ref{rem:Optimality} below.
\end{itemize}
\end{remark}

A strong law of large numbers dealing with the volume of Gaussian polytopes constructed from an underlying binomial point process has been derived by Hug and Reitzner \cite{HugReitzner} using the Chebychev inequality together with an upper bound on the variance obtained in the same paper. Using our concentration inequality from Theorem \ref{ConcentrationEinleitung} (i) we prove a stronger result for our Poisson point process-based model, which has the form of a Marcinkiewicz-Zygmund-type strong law. While the classical strong law of large numbers for the random variables $\vol(K_\la)$ says that
\begin{equation*}
	{\vol(K_{\la_k})-\EE[\vol(K_{\la_k})]\over (\log \la_k)^{d\over 2}}\longrightarrow 0\,,\qquad\text{as } k\to\infty\,,
\end{equation*}
with probability one, along all subsequences $\la_k$ of the form $\lambda_k:=a^k$, $a>1$, our Marcinkiewicz-Zygmund-type strong law makes a statement about the almost sure convergence to zero, as $k\to\infty$, of the random variables
$$
{\vol(K_{\la_k})-\EE[\vol(K_{\la_k})]\over (\log \la_k)^{p{d\over 2}}}
$$
\textit{for all} $p>{d-3\over 2d}$, again along all subsequences of the form $\la_k = a^k$, $a > 1$. While for $p\geq 1$ such a result is a consequence of a classical strong law, the situation for $p<1$ is not covered by such a result. We notice that for $p={d-3\over 2d}$ the denominator in the above expression equals $(\log \la_k)^{\frac{d-3}{4}}$, which is precisely the rescaling that is necessary in the central limit theorem for the volume of $K_\la$, see Corollary \ref{CLTEinleitung} below. Indeed, it holds that $\var[\vol(K_{\la_k})] \sim c (\log \la_k)^{\frac{d-3}{2}}$, as $k\rightarrow \infty$, where $c\in (0,\infty)$ is a constant just depending on $d$, see \cite{CalkaYukich}. This implies that our condition on $p$ is in fact optimal and covers the whole possible range of parameters $p$. In contrast to the volume functional and even in the case of an underlying binomial point process, a strong law of large numbers for the face numbers of Gaussian polytopes does not exist so far. In part (ii) of the next theorem we present the first such result. Again, our condition on the parameter $p$ is best possible.

\begin{thm}[Marcinkiewicz-Zygmund-type strong laws of large numbers]\label{StarkesGesetzVolumen}
Let $(\la_k)_{k\in \NN}$ be a sequence of real numbers defined by $\la_k = a^k$, $a>1$.
	\begin{description}
		\item[(i)] Fix $p > \frac{d-3}{2d}$. Then, as $k \rightarrow \infty$, one has that
		\begin{align*}
			\frac{\vol(K_{\la_k}) - \EE [\vol(K_{\la_k})]}{(\log \la_k)^{p \frac{d}{2}}} \longrightarrow 0 
		\end{align*}
		with probability one.
		\item[(ii)] Fix $j\in\{0,\ldots,d-1\}$ and $p > \frac{1}{2}$. Then, as $k\to\infty$, one has that
		\begin{align*}
			\frac{f_j(K_{\la_k}) - \EE [f_j(K_{\la_k})]}{(\log{\la_k})^{p \frac{d-1}{2}}} \longrightarrow 0 
		\end{align*}
		with probability one.
	\end{description}
\end{thm}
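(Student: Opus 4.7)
The plan is to combine the concentration inequality of Theorem \ref{ConcentrationEinleitung} with the Borel--Cantelli lemma along a carefully chosen deterministic subsequence $(\l_k)$, and, in part (i), to pass back to continuous $\l$ by monotonicity of the volume under a thinning coupling of the Poisson processes.

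For part (i), the case $p\ge 1$ follows from the classical (Chebychev-based) strong law of large numbers for $\vol(K_\l)$ together with the Calka--Yukich variance asymptotic $\var[\vol(K_\l)]\asymp(\log\l)^{(d-3)/2}$, so I may assume $p\in((d-3)/(2d),1)$. I would then set $\l_k:=\exp(k^\g)$ with $\g\in(0,2/(d(1-p)))$ and rewrite the threshold as $(\log\l_k)^{pd/2}=y_k\sqrt{\var[\vol(K_{\l_k})]}$, yielding $y_k\asymp k^{\g(pd/2-(d-3)/4)}$, which tends to infinity polynomially in $k$ precisely because $p>(d-3)/(2d)$. Theorem \ref{ConcentrationEinleitung}(i) then produces
\[
\PP\bigl(|\vol(K_{\l_k})-\EE[\vol(K_{\l_k})]|\ge(\log\l_k)^{pd/2}\bigr)\le 2\exp(-c\,k^\e)
\]
for some $\e>0$ and all large $k$, which is summable in $k$, so Borel--Cantelli gives the conclusion along $(\l_k)$. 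To pass to continuous $\l$, I would couple $\cP_\l$ to $\cP_{\l'}$ by thinning for $\l\le\l'$, so that $\vol(K_\l)$ is non-decreasing in $\l$ a.s.; sandwiching $\l\in[\l_k,\l_{k+1}]$ yields
\[
|\vol(K_\l)-\EE[\vol(K_\l)]|\le\max_{i\in\{k,k+1\}}|\vol(K_{\l_i})-\EE[\vol(K_{\l_i})]|+\bigl(\EE[\vol(K_{\l_{k+1}})]-\EE[\vol(K_{\l_k})]\bigr).
\]
The random part is $o((\log\l)^{pd/2})$ a.s.\ by the subsequence result, while by $\EE[\vol(K_\l)]\sim C(\log\l)^{d/2}$ (Affentranger) and $\log\l_{k+1}-\log\l_k\sim\g\,k^{\g-1}$ the deterministic expectation gap is of order $k^{\g d/2-1}$, which is $o(k^{\g pd/2})$ exactly when $\g<2/(d(1-p))$.

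For part (ii) no interpolation is required: the conclusion is stated only along the geometric subsequence $\l_k=a^k$, precisely because $f_j(K_\l)$ is not monotone in $\l$ (cf.\ Remark \ref{rem:SLLNMonotonicity}). I would apply Theorem \ref{ConcentrationEinleitung}(ii) with $(\log\l_k)^{p(d-1)/2}=y_k\sqrt{\var[f_j(K_{\l_k})]}$; using the Calka--Yukich asymptotic $\var[f_j(K_\l)]\asymp(\log\l)^{(d-1)/2}$ one obtains $y_k\asymp(k\log a)^{(d-1)(2p-1)/4}$, which grows polynomially in $k$ exactly under $p>1/2$. The resulting stretched-exponential probability bound is summable in $k$ and Borel--Cantelli closes the argument. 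The main obstacle is the interpolation in part (i): the subsequence $(\l_k)$ must simultaneously be sparse enough for Borel--Cantelli summability and dense enough to keep the deterministic expectation gap smaller than $(\log\l_k)^{pd/2}$, and it is this tension that forces the intermediate growth $\l_k=\exp(k^\g)$ with $\g<2/(d(1-p))$ rather than the naive geometric choice $\l_k=a^k$.
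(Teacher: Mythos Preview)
Your argument follows the paper's overall architecture --- Theorem~\ref{ConcentrationEinleitung} plus Borel--Cantelli along a deterministic subsequence, then a monotonicity coupling for the interpolation in part~(i) --- and part~(ii) is essentially identical to the paper's proof.

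For part~(i) the genuine difference is the subsequence. The paper takes $\lambda_k=e^k$ and sandwiches the \emph{uncentred} ratio $\vol(K_\lambda)/(\log\lambda)^{pd/2}$ between its values at $\lambda_{k-1},\lambda_k$, concluding from $\log\lambda_{k+1}/\log\lambda_k\to 1$ without ever isolating the deterministic expectation gap. You instead take $\lambda_k=\exp(k^\gamma)$ with $\gamma<2/(d(1-p))$ and make that gap explicit. This is more transparent, and your diagnosis is correct: with $\lambda_k=e^k$ the leading part of $\EE[\vol(K_{\lambda_{k+1}})]-\EE[\vol(K_{\lambda_k})]$ is of order $k^{d/2-1}$, which is \emph{not} $o(k^{pd/2})$ once $p\le(d-2)/d$, so the paper's interpolation step is at best incomplete in that range and your denser subsequence is a real refinement.

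One step in your sketch still needs care, however. You deduce $\EE[\vol(K_{\lambda_{k+1}})]-\EE[\vol(K_{\lambda_k})]=O(k^{\gamma d/2-1})$ from the first-order relation $\EE[\vol(K_\lambda)]\sim C(\log\lambda)^{d/2}$, but an asymptotic equivalence alone does not control differences of consecutive terms: if $\EE[\vol(K_\lambda)]=C(\log\lambda)^{d/2}+R(\lambda)$ with only $R(\lambda)=o((\log\lambda)^{d/2})$, then $R(\lambda_{k+1})-R(\lambda_k)$ could in principle be as large as $o(k^{\gamma d/2})$, which for $p<1$ is not $o(k^{\gamma pd/2})$. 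What is actually required is a Lipschitz-type estimate for $\log\lambda\mapsto\EE[\vol(K_\lambda)]$, e.g.\ that its derivative is $O((\log\lambda)^{d/2-1})$. Via \eqref{überzähligesRlambda} one may write $\EE[\vol(K_\lambda)]=\kappa_d R_\lambda^d-R_\lambda^{-1}\EE[H_\lambda^{\xi_V}]$; the first summand is an explicit smooth function of $\log\lambda$ and poses no problem, but you still need enough regularity of $\lambda\mapsto\EE[H_\lambda^{\xi_V}]$ (beyond the Calka--Yukich limit statement) to control its increments down to the scale $(\log\lambda)^{pd/2}$ for the full range $p>(d-3)/(2d)$. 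This is the same lacuna that the paper's sandwich leaves implicit.
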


\begin{remark}
\begin{itemize}
\item[(i)] In \cite{HugReitzner} the strong law of large numbers for the volume was proved for the binomial counterpart of the Gaussian random polytopes $K_{\lambda_k}$ along the subsequence $\lambda_k=2^k$ and then extended by monotonicity arguments to $\lambda_k=k$ (see \cite[Corollary 1.4]{HugReitzner}). In our set-up such an extension by monotonicity is not possible, since the centred random variables  $\vol(K_{\la_k}) - \EE [\vol(K_{\la_k})]$ and also $f_j(K_{\la_k}) - \EE [f_j(K_{\la_k})]$ ($j\in\{0,\ldots,d-1\}$) are not monotone in $k$.
\item[(ii)] As already pointed out in \cite{HugReitzner}, a classical strong law of large numbers for the volume of $K_{\la_k}$ can in principle also be deduced from the work of Geffroy \cite{Geffroy}. However, this is not the case for our Marcinkiewicz-Zygmund-type strong law. In addition, this approach also fails for the face numbers of Gaussian polytopes.
\end{itemize}
\end{remark}

As a consequence of the cumulant bound presented in Theorem \ref{cumulantestimate} we obtain upper and lower bounds for the $k$th moments of the volume and the face numbers of $K_\la$ that differ by a factor $c^k k!$, where $c\in (0,\infty)$ is a suitable constant.

\begin{thm}[Moment bounds]\label{MomentboundEinleitung}
\begin{description}
\item[(i)] There are constants $c_1,\ldots,c_4\in(0,\infty)$ only depending on $d$ such that
$$
c_1\, c_2^k\, (\log \la)^{k\frac{d}{2}} \le \EE[\vol(K_\la)^k] \le c_3\,c_4^k\, k!\, (\log \la)^{k\frac{d}{2}}
$$
for all sufficiently large $\la$ and $k\in \NN$.
\item[(ii)] Let $j\in \{0,\ldots,d-1\}$. Then there are constants $c_5,\ldots,c_{8}\in (0,\infty)$ that only depend on $d$ and $j$ such that 
$$
c_5\, c_6^k\, (\log \la)^{k\frac{d-1}{2}} \le \EE [f_j(K_\la)^k] \le c_7\, c_{8}^k\, k!\, (\log \la)^{k\frac{d-1}{2}} 
$$
for all sufficiently large $\la$ and $k\in \NN$.
\end{description}
\end{thm}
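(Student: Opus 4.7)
The lower bounds follow immediately from Jensen's inequality. Since $x \mapsto x^k$ is convex on $[0,\infty)$ for every $k \ge 1$, we have $\EE[X^k] \ge (\EE[X])^k$ for any nonnegative random variable $X$; it therefore suffices to recall the first-order mean asymptotics established in the literature. For the volume, the classical result of Affentranger (transferred routinely from the binomial to the Poissonized setting) yields $\EE[\vol(K_\l)] \asymp (\log\l)^{d/2}$, and for the face numbers the integral-geometric computation of Hug-Munsonius-Reitzner gives $\EE[f_j(K_\l)] \asymp (\log\l)^{(d-1)/2}$. Raising these to the $k$-th power produces the stated lower bounds with suitable constants $c_1,c_2,c_5,c_6$.

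For the upper bounds I would deploy the moment-cumulant formula
\begin{equation*}
\EE[X^k] \;=\; \sum_{\pi \in \cP(k)} \prod_{B \in \pi} \cum_{|B|}(X),
\end{equation*}
with $X \in \{\vol(K_\l), f_j(K_\l)\}$ and $\cP(k)$ the set of all partitions of $\{1,\ldots,k\}$. The ``diagonal'' partition into $k$ singletons contributes $(\EE[X])^k$, which already realises the claimed $\log\l$-order. Every other partition involves at least one block of size $\ge 2$, and for those I would invoke Theorem \ref{cumulantestimate} to bound $|\cum_m(X)|$ for $m \ge 2$ by a quantity whose $\log\l$-exponent is strictly smaller than $d/2$ (respectively $(d-1)/2$). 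Grouping partitions by their type $(a_1,a_2,\ldots)$, with $a_m$ the number of blocks of size $m$, and using the elementary count $k!/\prod_m (m!)^{a_m} a_m!$ for the number of set partitions of each type, the sum rearranges into a product over block sizes that can be estimated by a convergent geometric series in the block-size variable.

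The main obstacle will be to ensure that the combinatorial summation over partitions inflates the bound by no more than a single factor of $k!$, rather than $(k!)^{1+\delta}$ for some $\delta > 0$. This forces the cumulant bound of Theorem \ref{cumulantestimate} to be quasi-linear in $m!$ per block of size $m$ --- schematically $|\cum_m(X)| \le m!\, c^m (\log\l)^{\alpha}$ with $\alpha$ strictly smaller than the first-cumulant exponent --- so that the $(m!)^{a_m}$ appearing in the denominator of the partition count absorbs the combinatorial growth and leaves only one overall $k!$ after summation. The decisive input is the genuine gap in $\log\l$-order between $\cum_1(X) = \EE[X]$ and all higher cumulants: for the volume this is the drop from $(\log\l)^{d/2}$ down to $(\log\l)^{(d-3)/2}$ witnessed by the Calka-Yukich variance asymptotics, which propagates to all higher orders through the scaling transformation underlying Theorem \ref{cumulantestimate}. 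Provided this gap is preserved uniformly in $m$, the remaining estimates reduce to bookkeeping.
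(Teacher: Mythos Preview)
Your plan coincides with the paper's proof: Jensen's inequality for the lower bounds, and for the upper bounds the Bell-polynomial identity $\EE[X^k]=B_k(c^1[X],\ldots,c^k[X])$ together with Theorem~\ref{cumulantestimate}, arguing that the term $(c^1[X])^k$ dominates because every higher cumulant carries a strictly smaller power of $\log\lambda$, after which the paper bounds the number of integer-partition types by $\exp(\pi\sqrt{2k/3})$ and each multinomial coefficient by $k!$. The obstacle you single out---that Theorem~\ref{cumulantestimate} actually delivers $(m!)^{3dv[\xi]+u[\xi]+5+z[\xi]}$ rather than a single $m!$ per block---is genuine, and neither your sketch nor the paper's argument spells out in detail how the $\log\lambda$-gap absorbs this excess factorial growth uniformly; the paper, like you, treats the point as settled once $(c^1)^k$ is declared the dominating monomial.
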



One of the main results of the work of B\'ar\'any and Vu \cite{BaranyVu} is a central limit theorem for suitably normalized versions of the random variables $\vol(K_\la)$ and $f_j(K_\la)$. While we are able to recover their result with a weaker rate of convergence (see Theorem \ref{CLTempiricalmeasure} below and the discussion at the end of Section \ref{sec:MainResultsEmpirical}), our technique also delivers an estimate for the relative error in the central limit theorem that was not available before. To formulate our result, let $\Phi(x):=(2\pi)^{-1/2}\int_{-\infty}^x e^{-t^2/2}\,\dint t$, $x\in\RR$, be the distribution function of a standard Gaussian random variable.

\begin{thm}[Bounds on the relative errors in the central limit theorems]\label{DeviationprobabilityEinleitung}
\begin{description}
\item[(i)] Let us assume that $0\leq y\leq c_1\, (\log \la)^{d-1\over 4(6d+9)}$ and that $\la$ is sufficiently large. Then, one has that
	\begin{align*}
	\Bigg|\log{\PP\big(\vol(K_\la)-\EE [\vol(K_\la)]\geq y\, \sqrt{\var [\vol(K_\la)]}\,\big)\over 1-\Phi({y})}\Bigg| &\leq c_2\,(1+y^3)\, (\log \la)^{-{d-1\over 4(6d+9)}}\,,\\
	\Bigg|\log{\PP\big(\vol(K_\la)-\EE [\vol(K_\la)]\leq -y\, \sqrt{\var [\vol(K_\la)]}\,\big)\over \Phi(-{y})}\Bigg| &\leq c_2\,(1+y^3)\, (\log \la)^{-{d-1\over 4(6d+9)}}\,,
	\end{align*}
	with constants $c_1,c_2\in(0,\infty)$ only depending on $d$.
\item[(ii)]	For $0\leq y\leq c_3 (\log \la)^{d-1\over 4(2j(3d + 1) + 9 + 2z[f_j])}$, $j\in \{0,\ldots,d-1\}$ and sufficiently large $\la$ one has that 
	\begin{align*}
	\Bigg|\log{\PP\big(f_j(K_\la)-\EE [f_j(K_\la)]\geq y\, \sqrt{\var [f_j(K_\la)]}\,\big)\over 1-\Phi({y})}\Bigg| &\leq c_4\,(1+y^3)\, (\log \la)^{-{d-1\over 4(2j(3d + 1) + 9 + 2z[f_j])}}\,,\\
	\Bigg|\log{\PP\big(f_j(K_\la)-\EE [f_j(K_\la)]\leq -y\, \sqrt{\var [f_j(K_\la)]}\,\big)\over \Phi(-{y})}\Bigg| &\leq c_4\,(1+y^3)\, (\log \la)^{-{d-1\over 4(2j(3d + 1) + 9 + 2z[f_j])}}
	\end{align*}
	with constants $c_3,c_4\in(0,\infty)$ only depending on $d$ and $j$. 	
\end{description}
\end{thm}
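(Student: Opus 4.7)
The plan is to deduce Theorem~\ref{DeviationprobabilityEinleitung} from the cumulant bound of Theorem~\ref{cumulantestimate} by invoking the Cram\'er-type moderate-deviations lemma of Saulis and Statulevi\v{c}ius (Lemma~2.3 in \cite{SaulisBuch}). That result asserts that whenever a centred, unit-variance random variable $Y$ satisfies a cumulant inequality $|\Gamma_k(Y)|\le (k!)^{1+\gamma}\,\Delta^{-(k-2)}$ for every $k\ge 3$ with some $\gamma\ge 0$ and $\Delta>0$, then on the range $0\le y\le c\,\Delta^{1/(1+2\gamma)}$ both $|\log(\PP(Y\ge y)/(1-\Phi(y)))|$ and $|\log(\PP(Y\le -y)/\Phi(-y))|$ are bounded by $c'(1+y^3)\,\Delta^{-1/(1+2\gamma)}$, where the constants $c,c'\in(0,\infty)$ depend only on $\gamma$.

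First I would normalise by setting $\widetilde V_\l:=(\vol(K_\l)-\EE[\vol(K_\l)])/\sqrt{\var[\vol(K_\l)]}$ and likewise $\widetilde F_{j,\l}$ for the face numbers. By the homogeneity of cumulants one has $\Gamma_k(\widetilde V_\l)=\Gamma_k(\vol(K_\l))/\var[\vol(K_\l)]^{k/2}$, and analogously for $\widetilde F_{j,\l}$. Substituting the cumulant estimate of Theorem~\ref{cumulantestimate} and the precise variance asymptotics of Calka and Yukich \cite{CalkaYukich} (namely $\var[\vol(K_\l)]\asymp(\log\l)^{(d-3)/2}$ and $\var[f_j(K_\l)]\asymp(\log\l)^{(d-1)/2}$) into these identities produces Saulis-Statulevi\v{c}ius-type inequalities with explicit $\gamma$ and with an $\l$-dependent $\Delta=\Delta_\l$ that is a suitable power of $\log\l$.

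The bookkeeping is pinned down by the concentration inequality of Theorem~\ref{ConcentrationEinleitung}, which is the companion statement obtained from the very same cumulant bound via Lemma~2.4 of \cite{SaulisBuch}. For the volume, matching the transition between the two branches of the minimum in Theorem~\ref{ConcentrationEinleitung}(i) identifies $1+\gamma=3d+5$, hence $1+2\gamma=6d+9$, and $\Delta_\l\asymp(\log\l)^{(d-1)/4}$. Feeding these values into Lemma~2.3 recovers exactly the range $y\le c_1(\log\l)^{(d-1)/(4(6d+9))}$ and the relative error bound $(1+y^3)(\log\l)^{-(d-1)/(4(6d+9))}$ of part~(i). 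The argument for part~(ii) runs verbatim with $\gamma=j(3d+1)+4+z[f_j]$, which yields $1+2\gamma=2j(3d+1)+9+2z[f_j]$ and the same $\Delta_\l\asymp(\log\l)^{(d-1)/4}$, recovering the corresponding exponents in the statement.

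The main obstacle is not invoking Lemma~2.3, which is mechanical once the hypothesis is in hand, but verifying that the normalisation by the standard deviation yields precisely the prefactor $(k!)^{1+\gamma}$ and the decay $\Delta_\l^{-(k-2)}$ required by the lemma. Concretely, one must check that the variance lower bound from \cite{CalkaYukich} is exactly strong enough to cancel the $\l$-dependent combinatorial factors appearing in Theorem~\ref{cumulantestimate}, so that after normalisation $\Delta_\l$ carries precisely the power $(\log\l)^{(d-1)/4}$; any loss in this cancellation would propagate and degrade the exponent of $\log\l$ in the final moderate-deviations statement. Once this verification is in place, both parts of the theorem follow by direct substitution into Lemma~2.3.
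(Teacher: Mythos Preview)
Your proposal is correct and follows essentially the same route as the paper: the paper packages the Saulis--Statulevi\v{c}ius input as Lemma~\ref{VorbereitungKumulante}(ii), verifies the cumulant hypothesis by combining Theorem~\ref{cumulantestimate} with the variance lower bound (for the volume this is done directly in Section~\ref{sec:ProofMain} via $|c^k[\vol(K_\l)]|\le c_1c_2^k(\log\l)^{(d-1-k)/2}(k!)^{3d+5}$, and for the face numbers it comes through Theorem~\ref{thm:DeviationsEmpMeas} with $f\equiv 1$), and then reads off exactly the values $\gamma=3d+4$ respectively $\gamma=j(3d+1)+4+z[f_j]$ and $\Delta_\l\asymp(\log\l)^{(d-1)/4}$ that you identify. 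The only cosmetic difference is that the paper cites \cite{BaranyVu} rather than \cite{CalkaYukich} for the volume variance lower bound, and computes $\gamma$ directly from the exponent in Theorem~\ref{cumulantestimate} rather than reverse-engineering it from Theorem~\ref{ConcentrationEinleitung}.
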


The above theorem immediately implies that the random variables $\vol(K_\la)$ and $f_j(K_\la)$ satisfy a central limit theorem. Indeed, for fixed $y\in\RR$ not depending on $\la$, one has that the quantities on the right hand sides of the inequalities in Theorem \ref{DeviationprobabilityEinleitung} converge to zero, as $\la\to\infty$. Although this is known from \cite{BaranyVu}, as anticipated above, we formulate this observation as a corollary.

\begin{cor}[Central limit theorems]\label{CLTEinleitung}
	As $\la\to\infty$, the random variables
	$$
	{\vol(K_\la)-\EE [\vol(K_\la)]\over\sqrt{\var [\vol(K_\la)]}}\qquad\text{and}\qquad{f_j(K_\la)-\EE [f_j(K_\la)]\over\sqrt{\var [f_j(K_\la)]}}
	$$
	with $j\in\{0,\ldots,d-1\}$ satisfy a central limit theorem, that is, they converge in distribution to a standard Gaussian random variable.
\end{cor}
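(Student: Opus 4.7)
The plan is to deduce the corollary directly from Theorem \ref{DeviationprobabilityEinleitung} by specialising the bounds to a fixed $y\in\RR$ and letting $\l\to\infty$. Write $W_\l := (\vol(K_\l) - \EE[\vol(K_\l)])/\sqrt{\var[\vol(K_\l)]}$ and, analogously, $W_\l^{(j)}$ for the normalised face numbers. The key observation is that the admissibility window $0\leq y\leq c_1(\log\l)^{(d-1)/(4(6d+9))}$ in Theorem \ref{DeviationprobabilityEinleitung}(i) opens up as $\l\to\infty$, while the relative error $c_2(1+y^3)(\log\l)^{-(d-1)/(4(6d+9))}$ simultaneously closes; the two effects can be combined once $y$ is held fixed.

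Concretely, I would first fix an arbitrary $y\geq 0$. For all sufficiently large $\l$ the admissibility condition is met, so Theorem \ref{DeviationprobabilityEinleitung}(i) applies. Its right-hand side tends to $0$ as $\l\to\infty$, hence the two log-ratios vanish in the limit, and exponentiating gives
$$
\PP(W_\l\geq y)\longrightarrow 1-\Phi(y), \qquad \PP(W_\l\leq -y)\longrightarrow \Phi(-y).
$$
Since this holds for every $y\geq 0$ and $\Phi$ is continuous on $\RR$, the distribution function of $W_\l$ converges pointwise to $\Phi$ at every $y\in\RR$, which is precisely convergence in distribution of $W_\l$ to a standard Gaussian.

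The argument for $W_\l^{(j)}$, $j\in\{0,\ldots,d-1\}$, is identical after replacing Theorem \ref{DeviationprobabilityEinleitung}(i) by Theorem \ref{DeviationprobabilityEinleitung}(ii) and the exponent $(d-1)/(4(6d+9))$ by $(d-1)/(4(2j(3d+1)+9+2z[f_j]))$, which is strictly positive for every admissible $j$ since $z[f_j]\geq 0$ by definition. There is no substantive obstacle here: the corollary is a purely formal consequence of the quantitative moderate-deviation-type bounds of Theorem \ref{DeviationprobabilityEinleitung}. The only subtlety worth monitoring is that $y$ must be kept fixed while $\l\to\infty$, so that the admissibility window opens and the error term decays within the same limit; the log-ratio formulation of Theorem \ref{DeviationprobabilityEinleitung} then makes the passage to convergence of probabilities a one-line exponentiation.
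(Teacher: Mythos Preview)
Your proposal is correct and follows exactly the paper's approach: the paper simply remarks (in the paragraph preceding the corollary) that for fixed $y\in\RR$ the right-hand sides in Theorem \ref{DeviationprobabilityEinleitung} tend to zero as $\l\to\infty$, which forces the tail probabilities to match those of the standard Gaussian. Your write-up is a slightly more explicit rendering of the same one-line deduction.
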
 

After having investigated concentration inequalities, strong laws of large numbers and (the relative errors in) the central limit theorems, we turn now to moderate deviation principles for the volume and the face numbers of Gaussian polytopes. For convenience and to keep the paper self-contained, let us recall from \cite[Chapter III.1]{DenHollander} what this formally means.

\begin{definition}\label{def:LDPMDP}
A family $(\nu_\lambda)_{\lambda > 0}$ of probability measures on a topological space $E$ fulfils a large deviation principle with speed $a_\lambda$ and (good) rate function $I : E \rightarrow [0,\infty]$ if $I$ is lower semi-continuous, has compact level sets and if for every Borel set $B\subseteq E$,
\begin{align*}
-\inf\limits_{x\in \intt(B)} I(x) \leq \liminf\limits_{\lambda \rightarrow \infty} \frac{1}{a_\lambda} \log \nu_\lambda (B) \leq \limsup\limits_{\lambda \rightarrow \infty} \frac{1}{a_\lambda} \log \nu_\lambda (B) \leq -\inf\limits_{x\in \cl(B)} I(x)\,,
\end{align*}
where $\intt(B)$ and $\cl(B)$ stand for the interior and the closure of $B$, respectively. A family $(X_\lambda)_{\lambda > 0}$ of random elements in $E$ satisfies a large deviation principle with speed $a_\lambda$ and rate function $I : E \rightarrow [0,\infty]$, if the family of their distributions does. Moreover, if the involved random elements $(X_\lambda)_{\lambda > 0}$ satisfy a strong law of large numbers and a central limit theorem, and if the rescaling $a_\lambda$ lies between that of a law of large numbers and that of a central limit theorem, one usually speaks about a moderate deviation principle instead of a large deviation principle with speed $a_\la$ and rate function $I$.
\end{definition} 

Our main result in this context is a moderate deviation principle for the volume and one for the number of $j$-dimensional faces of the Gaussian polytopes $K_\la$ for all $j\in\{0,\ldots,d-1\}$. Although moderate (or large) deviations belong to the class of classical limit theorems in probability theory, to the best of our knowledge they have not been investigated in the context of Gaussian polytopes so far.

\begin{thm}[Moderate deviation principles]\label{ModeratedeviationsEinleitung}
\begin{description}
\item[(i)]  Let $(a_\lambda)_{\lambda > 0}$ be a sequence of real numbers with
	\begin{align*}
	\lim\limits_{\lambda \rightarrow \infty} a_\lambda = \infty \quad \text{and} \quad \lim\limits_{\lambda \rightarrow \infty} a_\lambda\,  (\log \la)^{-\frac{d-1}{4(6d + 9)}} = 0\,.
	\end{align*}
	Then, the family $$\left(\frac{1}{a_\la} \frac{\vol(K_\la) - \EE[\vol(K_\la)]}{\sqrt{\var [\vol(K_\la)]}} \right)_{\la> 0}$$ satisfies a moderate deviation principle on $\RR$ with speed $a_\la^2$ and rate function $I(x) = \frac{x^2}{2}$.
\item[(ii)] Let $j\in\{0,\ldots,d-1\}$ and let $(a_\la)_{\lambda > 0}$ be a sequence of real numbers satisfying
	\begin{align*}
	\lim\limits_{\lambda \rightarrow \infty} a_\lambda = \infty \quad \text{und} \quad \lim\limits_{\lambda \rightarrow \infty} a_\lambda\,  (\log \la)^{-\frac{d-1}{4(2j(3d + 1) + 9 + 2z[f_j])}} = 0\,.
	\end{align*}
	Then, the family of random variables $$\left(\frac{1}{a_\la} \frac{f_j(K_\la) - \EE[f_j(K_\la)]}{\sqrt{\var [f_j(K_\la)]}} \right)_{\la> 0}$$ satisfies a moderate deviation principle on $\RR$ with speed $a_\la^2$ and rate function $I(x)={x^2\over 2}$.	
\end{description}
\end{thm}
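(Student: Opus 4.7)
The plan is to deduce the moderate deviation principle directly from the sharp Cram\'er-type error bounds in the central limit theorem supplied by Theorem~\ref{DeviationprobabilityEinleitung}. Write $Y_\l := a_\l^{-1}(\vol(K_\l)-\EE[\vol(K_\l)])/\sqrt{\var[\vol(K_\l)]}$ for the rescaled random variable in part (i); the argument for the face numbers in part (ii) is identical after replacing the exponent $6d+9$ by $2j(3d+1)+9+2z[f_j]$ throughout, so I concentrate on (i).

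The first step is to establish the half-line asymptotics $\lim_{\l\to\infty} a_\l^{-2}\log\PP(Y_\l\geq y)=-y^2/2$ and its left-tail analogue for every fixed $y>0$. The hypothesis $a_\l=o\bigl((\log\l)^{(d-1)/(4(6d+9))}\bigr)$ guarantees that $a_\l y$ lies inside the validity range of Theorem~\ref{DeviationprobabilityEinleitung}(i) for sufficiently large $\l$, so applying that estimate with its free parameter set to $a_\l y$ yields
\begin{equation*}
\log\PP(Y_\l\geq y)=\log\bigl(1-\Phi(a_\l y)\bigr)+\theta_\l,\qquad |\theta_\l|\leq c_2\bigl(1+(a_\l y)^3\bigr)(\log\l)^{-\frac{d-1}{4(6d+9)}}.
\end{equation*}
The Mills-ratio asymptotics $\log(1-\Phi(t))=-t^2/2+O(\log t)$ show that $a_\l^{-2}\log(1-\Phi(a_\l y))\to -y^2/2$, and dividing the bound on $\theta_\l$ by $a_\l^2$ gives
\begin{equation*}
\frac{|\theta_\l|}{a_\l^2}\leq c_2\Bigl(a_\l^{-2}+y^3\, a_\l(\log\l)^{-\frac{d-1}{4(6d+9)}}\Bigr)\longrightarrow 0
\end{equation*}
by the two standing hypotheses on $a_\l$. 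The left-tail limit follows verbatim from the second inequality of Theorem~\ref{DeviationprobabilityEinleitung}(i).

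The second step is to lift these half-line asymptotics to the full moderate deviation principle with rate function $I(x)=x^2/2$. For the upper bound, let $F\subseteq\RR$ be closed; if $0\in F$ there is nothing to show, and otherwise setting $r:=\inf_{x\in F}|x|>0$ we have $\PP(Y_\l\in F)\leq \PP(Y_\l\geq r)+\PP(Y_\l\leq -r)$, which together with the half-line limits yields $\limsup a_\l^{-2}\log\PP(Y_\l\in F)\leq -r^2/2=-\inf_F I$. For the lower bound, let $G\subseteq\RR$ be open. If $0\in G$ then $\inf_G I=0$ and the bound is immediate from $Y_\l\to 0$ in probability (consequence of Corollary~\ref{CLTEinleitung} and $a_\l\to\infty$). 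Otherwise, given $\eta>0$, pick $x\in G$ with $I(x)<\inf_G I+\eta$ and, assuming $x>0$ (the case $x<0$ is symmetric), choose $0<\d<x$ with $(x-\d,x+\d)\subseteq G$. Writing
\begin{equation*}
\PP(Y_\l\in G)\geq \PP(Y_\l\geq x-\d)-\PP(Y_\l\geq x+\d)
\end{equation*}
and using that the second term is exponentially smaller than the first on the $a_\l^2$-scale because $(x+\d)^2>(x-\d)^2$, the first step delivers $\liminf a_\l^{-2}\log\PP(Y_\l\in G)\geq -(x-\d)^2/2$. Sending $\d\downarrow 0$ and then $\eta\downarrow 0$ gives the required lower bound.

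The genuinely technical ingredient is already packaged into Theorem~\ref{DeviationprobabilityEinleitung}, and the only point requiring care above is ensuring that the Cram\'er error $\theta_\l/a_\l^2$ vanishes: the dominant term $(a_\l y)^3(\log\l)^{-(d-1)/(4(6d+9))}/a_\l^2=y^3\,a_\l(\log\l)^{-(d-1)/(4(6d+9))}$ forces precisely the upper rate condition imposed on $a_\l$ in the theorem, which is why the admissible range of rescalings matches exactly the validity range of Theorem~\ref{DeviationprobabilityEinleitung}. Completely analogous reasoning, with the exponent adjusted as described, handles the face-number case (ii).
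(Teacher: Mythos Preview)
Your argument is correct, but it takes a different route from the paper. The paper does not pass through Theorem~\ref{DeviationprobabilityEinleitung} at all; instead it goes straight from the cumulant bound (Theorem~\ref{cumulantestimate}) and the variance lower bound to verify that the normalized variables satisfy $|c^k[X_\l]|\leq (k!)^{1+\gamma}/\Delta_\l^{\,k-2}$ with $\gamma=3d+4$ and $\Delta_\l=c(\log\l)^{(d-1)/4}$, and then invokes part (iii) of Lemma~\ref{VorbereitungKumulante} (a black-box consequence of the Saulis--Statulevi\v{c}ius theory) to obtain the MDP in one stroke. Your approach instead treats Theorem~\ref{DeviationprobabilityEinleitung} as the input and reconstructs the MDP by hand from the Cram\'er-type tail asymptotics via the standard open/closed set argument. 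This is more elementary in that it makes the passage from sharp tail control to the large-deviation upper and lower bounds completely explicit, and it nicely explains why the admissible range for $a_\l$ coincides with the validity window of Theorem~\ref{DeviationprobabilityEinleitung}. On the other hand, since Theorem~\ref{DeviationprobabilityEinleitung} is itself derived from the same cumulant estimate via Lemma~\ref{VorbereitungKumulante}(ii), your route is logically longer: you are effectively re-proving part (iii) of Lemma~\ref{VorbereitungKumulante} from part (ii) in this specific instance.
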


The results that we have presented in this section have immediate consequences for the model of randomly rotated and projected simplices briefly discussed in the previous section if we randomize the model further. Namely, we let the space dimension $n=N(\la)$ be an independent random integer that is Poisson distributed with parameter $\la$ and think of ${\rm pr}_d^{N(\la)}(\varrho(\Delta_{N(\la)}))$ as already being embedded in $\RR^d$ in the case that $N(\la)\leq d$ (the probability of this event tends to zero, as $\la\to\infty$). Then, we conclude  for the face numbers $f_j({\rm pr}_d^{N(\la)}(\varrho(\Delta_{N(\la)})))$ of ${\rm pr}_d^{N(\la)}(\varrho(\Delta_{N(\la)}))$ for all $j\in\{0,\ldots,d-1\}$ from Theorem \ref{ConcentrationEinleitung} a concentration inequality, from Theorem \ref{StarkesGesetzVolumen} a Marcinkiewicz-Zygmund-type strong law of large numbers, from Theorem \ref{MomentboundEinleitung} bounds for the moments of all orders, from Theorem \ref{DeviationprobabilityEinleitung} a bound on the relative error in the central limit theorem that we have from Corollary \ref{CLTEinleitung} as well as a moderate deviation principle from Theorem \ref{ModeratedeviationsEinleitung}. Moreover, Theorem \ref{cumulantestimate} below delivers a bound on the cumulants of these random variables. We refrain from presenting all these results formally, since their statements are literally the same as in the theorems mentioned above with the Gaussian polytope $K_\la$ replaced by the randomly rotated and projected simplex ${\rm pr}_d^{N(\la)}(\varrho(\Delta_{N(\la)}))$.

\medbreak

The theorems we have seen in this section (and also those in Section \ref{sec:MainResultsEmpirical} below) are the clear analogues for Gaussian polytopes of the results recently derived in our paper \cite{GroteThäle}, where we considered random polytopes that arise as convex hulls of a homogeneous Poisson point process in the $d$-dimensional unit ball. Moreover, also the principal technique we use, based on sharp bounds for cumulants in conjunction with the large deviation theory from \cite{SaulisBuch}, parallels that in \cite{GroteThäle}. However, we emphasize at this point that besides of these conceptual similarities, the further details and arguments differ considerably and require much more technical effort as well as a number of new ideas compared to \cite{GroteThäle}. This is basically due to the fact that, in contrast to random polytopes in the unit ball, Gaussian polytopes in $\RR^d$ grow unboundedly in all directions. In particular, for any fixed $\la$ there is no centred ball with radius only depending on $\la$ (or any other deterministic set that depends on the parameter $\la$ only) in which a Gaussian polytope is included with probability one. This in turn implies that the scaling transformation we borrow from \cite{CalkaYukich}, which we recall in Section \ref{sec:Preliminaries} below, maps a Gaussian polytope into a random set in the product space $\RR^{d-1}\times\RR$, while the scaling transformation for random polytopes in the unit ball has $\RR^{d-1}\times[0,\infty)$ as its target space, see \cite{CalkaSchreiberYukich}. Here, the upper half-space $\RR^{d-1}\times[0,\infty)$ corresponds to the image of an appropriate centred ball that contains the Gaussian polytope with high probability, while the lower half-space $\RR^{d-1}\times(-\infty,0)$ corresponds to the image of its complement. The probability that the latter contains points from the underlying Poisson point process is small, but if there are such points, they have a significant influence on the geometry of the Gaussian polytopes. While the rescaled geometric functionals satisfy a weak spatial localization property in the upper half-space, such a behaviour is no longer true in the global set-up. This remarkable but unavoidable phenomenon, explained in detail in \cite{CalkaYukich} and briefly recalled in Section \ref{sec:Preliminaries}, causes considerable technical difficulties that were not present in our previous work \cite{GroteThäle} and makes the analysis of probabilistic properties of Gaussian polytopes a demanding task.

\section{Preliminaries}\label{sec:Preliminaries}

We start this section by introducing some notation.  
By $\|\,\cdot\,\|$ we denote the standard Euclidean norm in $\RR^d$. Furthermore, we put $\BB^d := \{x\in \RR^d: \|x\|\le 1 \}$ and  $\SS^{d-1} := \{x\in \RR^d: \|x\| = 1\}$, and define $\kappa_j := \text{vol}_j(\BB^j)$ as the $j$-volume of $\BB^j$ for $j\in\NN$.  
By $\sigma_{d-1}$ we denote the $(d-1)$-dimensional Hausdorff measure on $\SS^{d-1}$ and by $\gamma_d$ the standard Gaussian measure on $\RR^d$.

Let $\BB^d(x,r)$ be the ball centred at $x\in \RR^d$ with radius $r>0$. If we parametrize points in $\RR^d$ by $(v,h)\in\RR^{d-1}\times\RR$ we write $C_{d-1}(v,r)$ for the infinite vertical cylinder $\BB^{d-1}(v,r) \times \RR$ around $x$ with base radius $r>0$.

Finally, if $E$ is some Polish space, $\cB(E)$ and $\cC(E)$ indicate the spaces of bounded measurable and of bounded continuous functions on $E$, respectively. Furthermore, for a Borel set $B\subseteq E$ we write $\cC(E;B)$ for the collection of functions $f\in\cB(E)$ whose set of continuity points includes $B$. We write $\cM(E)$ for the space of finite signed measures on $E$, and for a function $f\in \mathcal{B}(E)$ and a measure $\nu\in \mathcal{M}(E)$ we will use the symbol $\langle f,\nu \rangle$ to abbreviate the integral of $f$ with respect to $\nu$, that is, $\langle f,\nu\rangle:=\int f\,\dint\nu$.

\subsection{The scaling transformation}

Recall that we denote by $K_\la$ a Gaussian polytope that arises as the convex hull of a Poisson point process $\cP_\la$ with intensity measure $\la\g_d$ for some $\la>0$. An important tool on our way to asymptotic results about geometric characteristics of $K_\la$ is a scaling transformation that has been introduced in \cite{CalkaYukich}. It maps the point process $\mathcal{P}_\la$ into the space $\RR^{d-1}\times \RR$. In what follows, we recall the definition of the scaling transformation and those properties that are needed in our proofs.

Let $u_0:= (0,\ldots,0,1) \in \RR^d$ and $T_{u_0}:= T_{u_0}(\SS^{d-1})$ be the tangent space of $\SS^{d-1}$ at the point $u_0$. We identify $T_{u_0}$ with the $(d-1)$-dimensional Euclidean space $\RR^{d-1}$. The exponential map $\exp := \exp_{u_0}: T_{u_0} \rightarrow \SS^{d-1}$ maps a vector $v\in T_{u_0}$ to the point $u\in \SS^{d-1}$ in such a way that $u$ lies at the end of the unique geodesic ray with length $\|v\|$ starting at $u_0$ and having direction $v$. We denote by $\BB_{d-1}(0,\pi) := \{v\in T_{u_0}: \|v\| < \pi\}$ the region of $T_{u_0}$ on which the exponential map is injective and note that $\exp(\BB_{d-1}(0,\pi)) = \SS^{d-1}\setminus\{-u_0\}$. (Following \cite{CalkaYukich}, we prefer to write $\BB_{d-1}(0,r)$ for a centred ball of radius $r>0$ in $T_{u_0}$ instead of $\BB^{d-1}(0,r)$ to prevent confusions.) 

\begin{definition}
Define
\begin{align}\label{Rlambda}
R_\la:= \sqrt{2\,\log \la- \log(2^{d+1}\pi^d\, \log \la)}
\end{align}
and let $\la>0$ be such that $R_\la\geq 1$. Then the mapping
$T_\la: \RR^d \rightarrow \RR^{d-1} \times \RR$ given by 
\begin{align}\label{Transformation}
T_\la(x) := \left(R_\la\, \exp^{-1}\left(\frac{x}{\|x\|}\right), R_\la^2\, \left(1-\frac{\|x\|}{R_\la}\right)\right)\,,\qquad x\in\RR^d\setminus\{0\},
\end{align}
is called scaling transformation and maps $\RR^d\setminus\{0\}$ into the region $W_\la=R_\la\, \BB_{d-1}(0,\pi) \times (-\infty, R_\la^2]\subseteq \RR^{d-1} \times \RR$, see Figure \ref{Transformation.}.
\end{definition}    

\begin{figure}[t] 
	\centering
	\includegraphics[width=\textwidth]{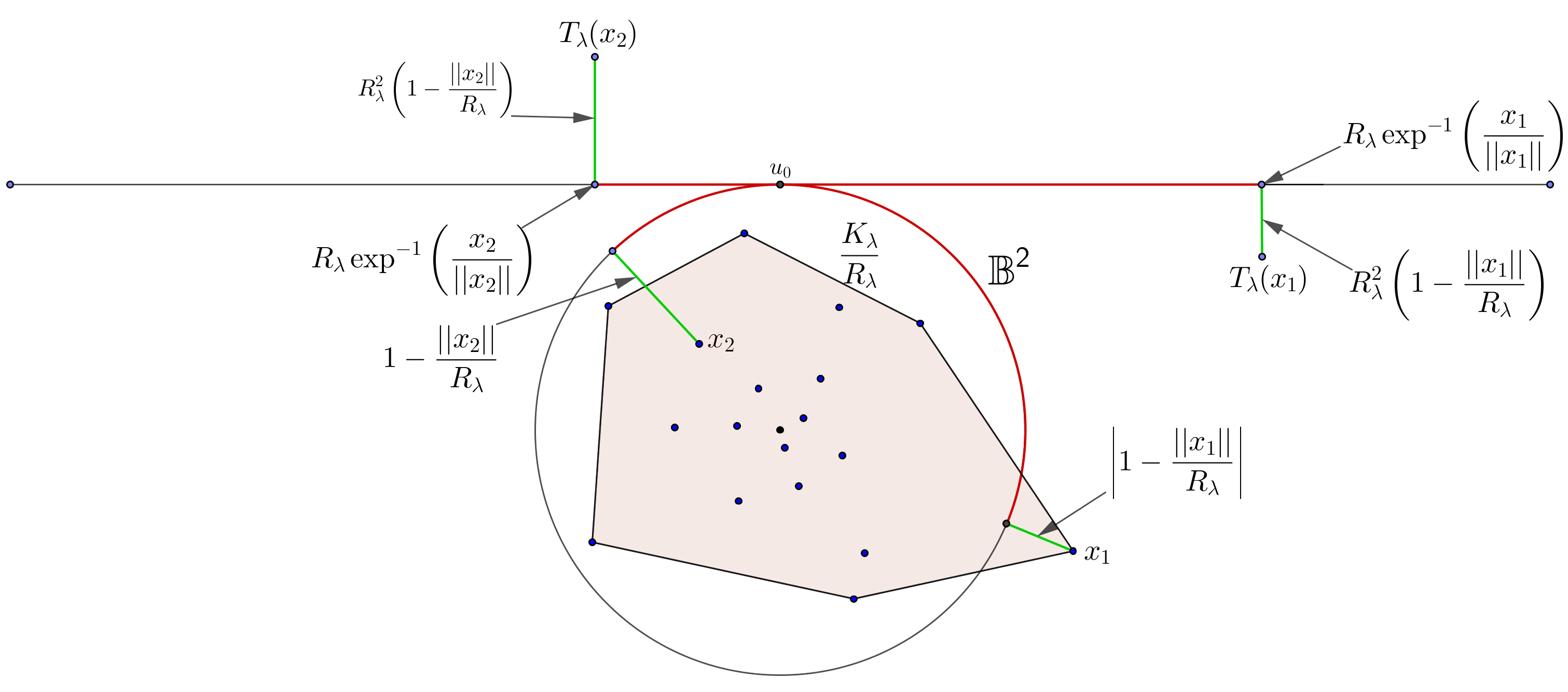}
	\caption{The scaling transformation $T_\lambda$.}
	\label{Transformation.}
\end{figure}

In the rest of this paper we will implicitly assume that $\la$ is such that $R_\la\geq 1$. 
The inverse of the exponential map, $\exp^{-1}$, is well defined on $\SS^{d-1}\setminus \{-u_0\}$ and for convenience we also put $\exp^{-1}(-u_0) := (0,\ldots,0,\pi)$ and $T_\la(0) := (0,R_\la^2)$. So defined, the scaling transformation $T_\la$ describes a bijection between $\RR^d$ and $W_\la$. 

The rescaled point process is given by $\mathcal{P}^{(\la)} := T_{\la}(\mathcal{P}_\la)$, see Figure \ref{Transformation..}.
\begin{figure}[t] 
	\centering
	\includegraphics[width=\textwidth]{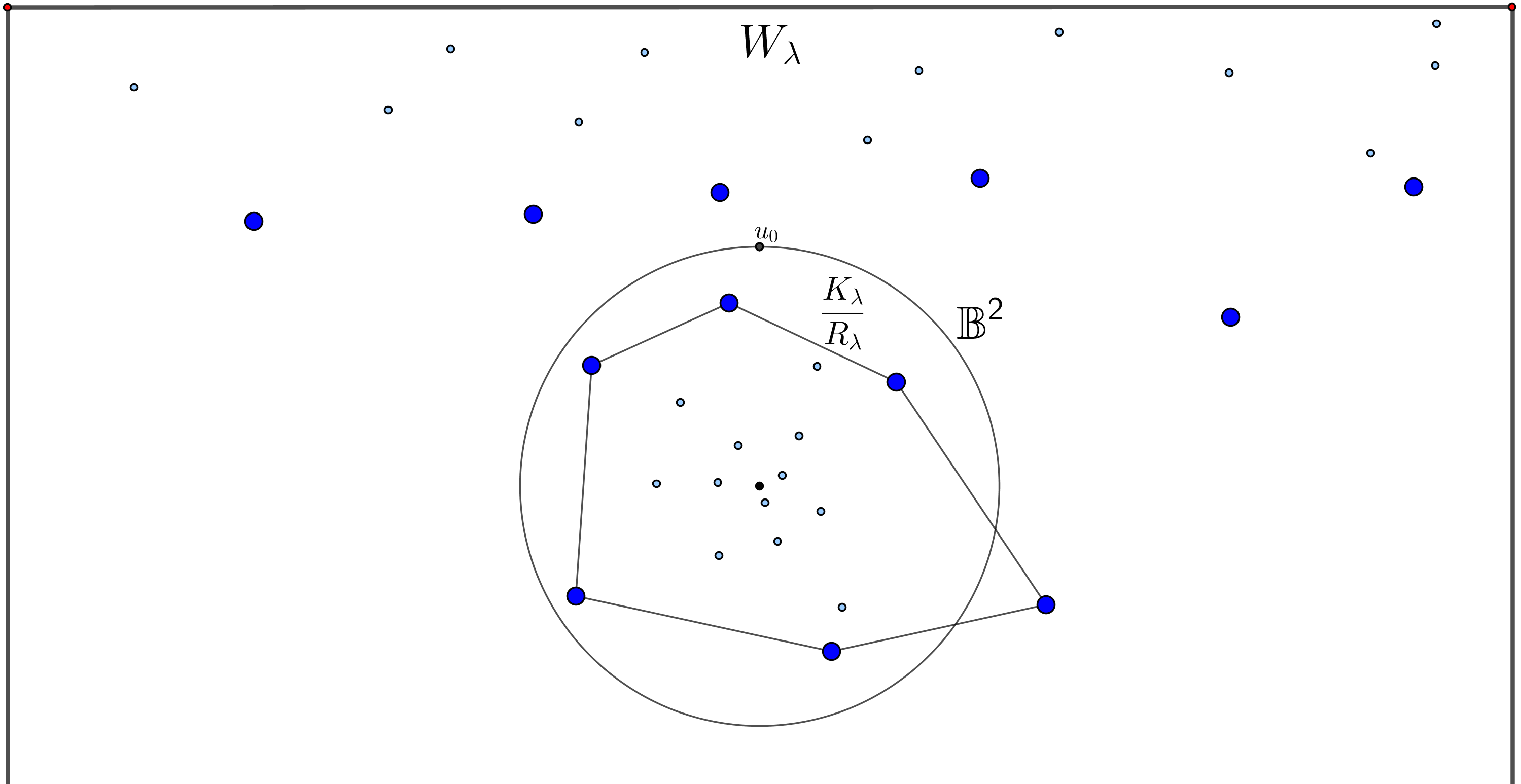}
	\caption{The rescaled Poisson point process $\mathcal{P}^{(\lambda)}$.}
	\label{Transformation..}
\end{figure}
Because of the mapping property of Poisson point processes, the rescaled point process $\mathcal{P}^{(\la)}$ is a Poisson point process on $W_\la$. In particular, Equation (3.18) in \cite{CalkaYukich} shows that the intensity measure of $\cP^{(\la)}$ has density
\begin{align}\label{IntensityP^lambda}
(v,h) \mapsto \frac{\sin^{d-2} (R_\la^{-1}\|v\|)}{\|R_\la^{-1}v\|^{d-2}}\, \frac{\sqrt{2\log \la}}{R_\la}\, \exp\left(h-\frac{h^2}{2R_\la^2}\right) \left(1-\frac{h}{R_\la^2}\right)^{d-1}\,{\bf 1}((v,h)\in W_\la)
\end{align}  
with respect to the Lebesgue measure on $\RR^{d-1}\times\RR$. In other words, \eqref{IntensityP^lambda} is the density of the image measure of $\la\gamma_d$ under the scaling transformation $T_\la$. It is readily seen that, as $\la\to\infty$, this density converges to 
\begin{align}\label{P}
(v,h) \mapsto e^h\,,\qquad  (v,h) \in \RR^{d-1} \times \RR\,,
\end{align}
which in view of Proposition 2.1 and Remark 3.2 (iv) in \cite{DST} (see also Lemma 3.2 in \cite{CalkaYukich}) implies that $\cP^{(\la)}$ converges in distribution to a Poisson point process $\mathcal{P}$ on $\RR^{d-1} \times \RR$ whose intensity measure has density as in \eqref{P} with respect to the Lebesgue measure on $\RR^{d-1} \times \RR$. Similarly, it follows from \cite[Equation (3.19)]{CalkaYukich} that the image measure of $R_\la$ times the Lebesgue measure on $\RR^d$ under the scaling transformation $T_\la$ has density
\begin{align}\label{eq:DensityLebesgueUnderTrafo}
(v,h)\mapsto{\sin^{d-2}(R_\la^{-1}\|v\|)\over\|R_\la^{-1}v\|^{d-2}}\,\left(1-\frac{h}{R_\la^2}\right)^{d-1}\,{\bf 1}((v,h)\in W_\la)
\end{align}
with respect to the Lebesgue measure on the product space $\RR^{d-1}\times\RR$.

\begin{remark}\label{rem:WalVonRl}
Let us briefly comment on the choice of the critical radius $R_\la$, which might look a bit inconvenient at first sight. In the proof of formula \eqref{IntensityP^lambda}, the definition of $R_\la$ is crucial to ensure the equality 
\begin{align}\label{WahlvonRlambda}
\la\, \phi\left(u\, R_\la\left(1-\frac{h}{R_\la^2}\right)\right) = \sqrt{2 \log \la}\, \exp \left(h-\frac{h^2}{2R_\la^2}\right), \qquad u\in \SS^{d-1},
\end{align}
which in turn is needed to show the convergence in distribution of $\mathcal{P}^{(\la)}$ to $\mathcal{P}$ discussed above and which is also used in the proof of our cumulant bound in Section 6 below. Indeed, the definition of $R_\la$ yields that
\begin{align*}
\exp\left(-\frac{R_\la^2}{2}\right) &= \exp\left(-\frac{1}{2}\Big(2\log \la- \log(2^{d+1}\pi^d \log \la)\Big)\right)=\frac{1}{\la} (2\pi)^{d/2} \sqrt{2\log \la}
\end{align*}
and we thus obtain for $u\in\SSd$, by using \eqref{DefinitionPhi}, that
\begin{align*}
\phi\left(u\, R_\la\left(1-\frac{h}{R_\la^2}\right)\right) &= (2\pi)^{-d/2}\, \exp\left(-\frac{1}{2}\, \left\|u\, R_\la\left(1- \frac{h}{R_\la^2}\right)\right\|^2\right)\\
&= (2\pi)^{-d/2}\, \exp\left(-\frac{1}{2}\, R_\la^2\, \left(1-\frac{2h}{R_\la^2} + \frac{h^2}{R_\la^4}\right)\right)\\
&= (2\pi)^{-d/2}\, \exp\left(h-\frac{h^2}{2R_\la^2}\right)\, \exp\left(-\frac{R_\la^2}{2}\right)\\
&= \frac{\sqrt{2\log \la}}{\la}\, \exp\left(h-\frac{h^2}{2R_\la^2}\right)\,, 
\end{align*}
which proves \eqref{WahlvonRlambda}.
\end{remark}

\subsection{The volume and $j$-face functionals}

This section starts with the introduction of the key geometric functionals of $K_\la$ we consider in this paper.  Afterwards, we link these characteristics of $K_\la$ with those of germ-grain processes $\Psi^{(\la)}$ and $\Phi^{(\la)}$ in $\RR^{d-1} \times \RR$ via the scaling transformation $T_\la$ introduced in the previous section.

Given a finite point set $\cX \subseteq \RR^d$, let $[\cX]$ be the convex hull of $\cX$ and $x$ be a vertex of $[\cX]$. Then the symbol $\cF_{j}(x,\cX)$ is used for the collection of all $j$-dimensional faces of $[\cX]$ that contain $x$, $j\in \{0,\ldots,d-1\}$. In particular, $\cF_0(x,\cX)=\{x\}$. We also write $|\cF_{j}(x,\cX)|$ for the cardinality of $\cF_j(x,\cX)$ and define the cone induced by $\cF_{d-1}(x,\cX)$ as $\text{cone}(x,\cX) := \{ry: r\geq 0, y\in \cF_{d-1}(x,\cX)\}$.   

\begin{definition} We define the defect volume functional with respect to the ball $\BB^d(0,R_\la)$ by putting
\begin{align*}
\xi_V(x,\cP_\la):={1\over d}\, R_\la\, \big(\vol(\cone(x,\cP_\la)\cap\BBd(0,R_\la)) - \vol(\cone(x,\cP_\la)\cap K_\la)\big)\,,
\end{align*}
if $x$ is a vertex of $K_\la$ and zero for all other points of $\cP_\la$. Moreover, for $j\in \{0,\ldots,d-1\}$ we define the $j$-face functional of the Gaussian polytope $K_\la$ by
$$
\xi_{f_j}(x,\cP_\la):={1\over j+1} |\mathcal{F}_j(x,\cP_\la)|\,,
$$
if $x$ is a vertex of $K_\la$ and again $\xi_{f_j}(x,\cP_\la):=0$ otherwise, as above. We shall write $\Xi:=\{\xi_V,\xi_{f_0},\ldots,\xi_{f_{d-1}}\}$ for the collection of these geometric functionals and use for $\xi\in \Xi$ the abbreviation
\begin{equation}\label{eq:DefHXi}
H_\la^\xi := \sum_{x\in\cP_\la}\xi(x,\cP_\la)\,.
\end{equation}
\end{definition}

With these definitions it follows that the total number of $j$-faces of $K_\la$, $j\in \{0,\ldots,d-1\}$, almost surely satisfies $f_j(K_\la) =H^{\xi_{f_j}}_\la$,
while the total defect volume of $K_\la$ with respect to the ball $\BBd(0,R_\la)$ fulfills
\begin{align}\label{überzähligesRlambda}
\vol(\BBd(0,R_\la)) - \vol(K_\la)=R_\la^{-1}\, H^{\xi_V}_\la
\end{align}
almost surely, conditioned on the event that the origin is an interior point of $K_\la$. We notice that this event occurs with probability at least $1-e^{-c\lambda}$ for some constant $c\in(0,\infty)$ only depending on $d$. To keep our presentation short in all computations concerning the functional $\xi_V$ that are carried out in Sections \ref{sec:MomentEstimates} and \ref{sec:CumulantProof}, we implicitly condition on this event. In fact, this causes -- up to constants -- no changes in our results, since conditioning on the complementary event only leads to terms that are negligible for sufficiently large $\la$. Also implicitly this convention has already been used in \cite{CalkaSchreiberYukich,CalkaYukich,GroteThäle}.

\begin{remark}
It is known from the work of Geffroy \cite{Geffroy} that the Hausdorff distance between $K_{\la_k}$ and $\BB^d(0,\sqrt{2\, \log \la_k})$ converges to zero almost surely, along all suitable subsequences $\la_k$ tending to infinity, as $k\to\infty$. Furthermore, B\'ar\'any and Vu \cite{BaranyVu} show that for sufficiently large $\la$ the vertices of $K_\la$ concentrate around the boundary of $\BB^d(0,R_\la)$ with overwhelming probability. It is thus natural to choose $\BB^d(0,R_\la)$ as a reference body for the Gaussian polytope $K_\la$ and to compare the volume of $K_\la$ with that of $\BB^d(0,R_\la)$. 
\end{remark}
\medskip

Next, we define for every $\xi \in \Xi$ and sufficiently large $\la$ the rescaled functional $\xi^{(\la)}$ by
\begin{align*}
\xi^{(\la)} (w,\cP^{(\la)}) := \xi (T_\la^{-1}(w), T_\la^{-1}(\cP^{(\la)}))\,,\qquad w\in W_\la\,,
\end{align*} 
and denote by $\Xi^{(\la)} := \{\xi_V^{(\la)}, \xi_{f_0}^{(\la)}, \ldots, \xi_{f_{d-1}}^{(\la)}\}$ the family of rescaled geometric functionals. Here and in the rest of this paper we adopt the following notational convention. If $w\in W_\la$ is not a point of the rescaled point process $\cP^{(\la)}$, we understand $\xi^{(\la)} (w,\cP^{(\la)})$ as $\xi^{(\la)} (w,\cP^{(\la)} \cup \{w\})$ and similarly also $\xi(x,\cP_\la)$ as $\xi(x,\cP_\la\cup \{x\})$ for $\xi\in\Xi$ and $x\in\RR^d$ with $x\notin \cP_\la$.

These functionals are tightly connected to geometric properties of special germ-grain processes that have been introduced in \cite{CalkaYukich}. Namely, let $x\in \RR^d$ and $(v,h):=T_\la(x)$. Then the discussion at the beginning of Section 3.1 in \cite{CalkaYukich} shows at first that $T_\la$ transforms the ball $\BBd(x/2, \|x\|/2)$ into the set
\begin{align*}
[\Pi^{\uparrow}(v,h)]^{(\lambda)} := \left\{(v',h')\in W_\la: h'\geq R_\la^2(1-\cos(d_\lambda(v',v))) + h\, \cos(d_\lambda(v',v))\right\}\,,
\end{align*}
where $d_\lambda(v',v) := d_{\mathbb{S}^{d-1}}(\exp(R_\la^{-1} v'), \exp(R_\la^{-1}v))$ is the geodesic distance between the images of the rescaled points $v'$ and $v$ under the exponential map. The results in \cite{CalkaYukich} then imply that the germ-grain process 
\begin{align*}
\Psi^{(\lambda)}:=  \bigcup\limits_{w\in \mathcal{P}^{(\lambda)}} [\Pi^{\uparrow}(w)]^{(\lambda)},
\end{align*}
see Figure \ref{GermGrain1}, has deep connections to the geometry of the Gaussian polytopes $K_\la$.
\begin{figure}[t] 
	\centering
	\includegraphics[width=\textwidth]{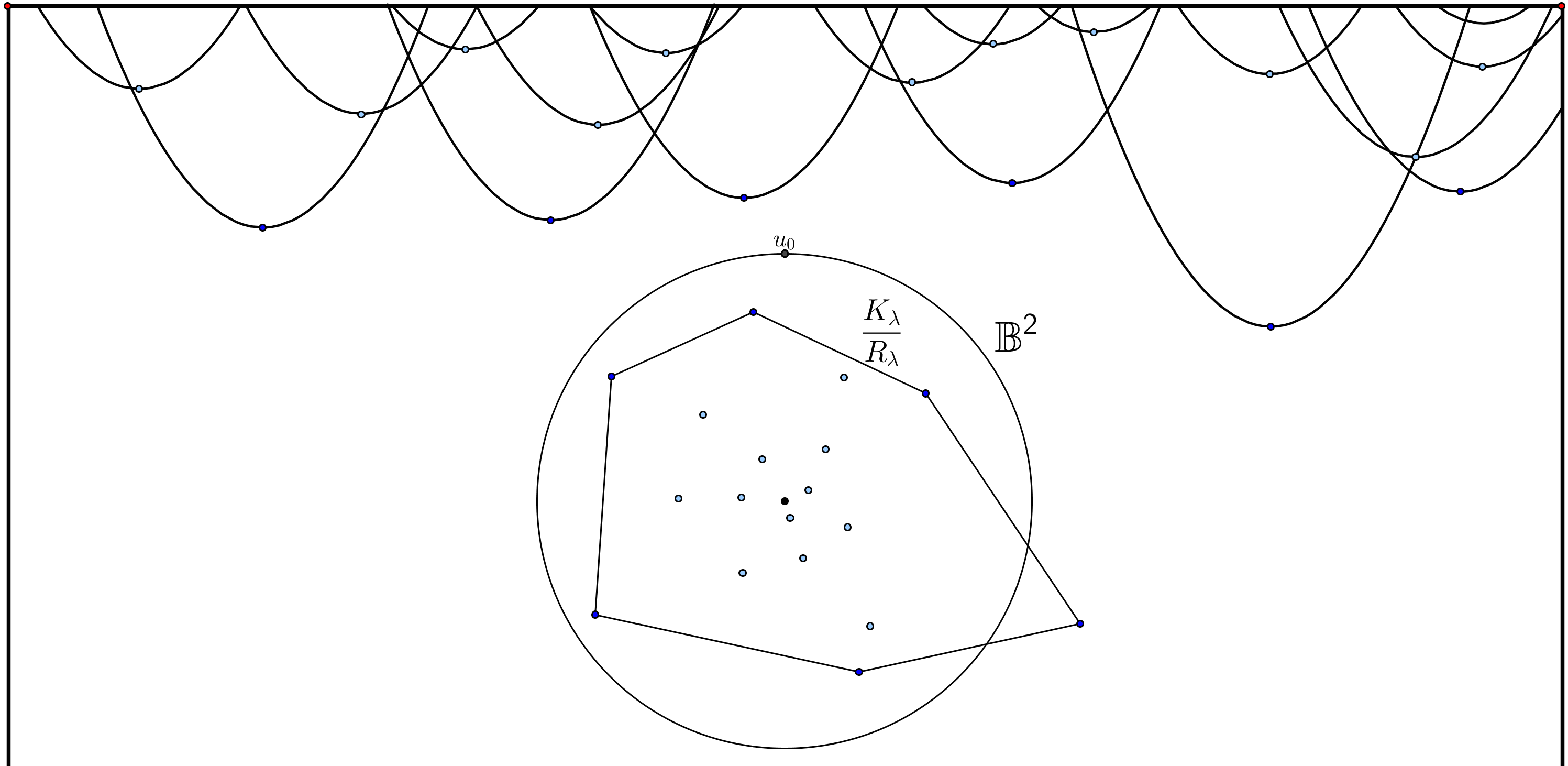}
	\caption{The germ-grain model $\Psi^{(\lambda)}$.}
	\label{GermGrain1}
\end{figure}
For example, it turns out that, for sufficiently large $\la$, the property that a point $x\in\RR^d$ is a vertex of $K_\la$ is equivalent to the statement that $T_\la(x)$ is an extreme point of $\Psi^{(\la)}$, whose collection is denoted by $\ext(\Psi^{(\la)})$ in what follows. The latter property means that $[\Pi^{\uparrow}(T_\la(x))]^{(\lambda)}$ is not covered by other grains from $\Psi^{(\la)}$. This observation has been used extensively in \cite{CalkaYukich} and also our results exploit this fact.

In \cite{CalkaYukich} a germ-grain model $\Phi^{(\la)}$ `dual' to $\Psi^{(\la)}$ has also been introduced. Formally, $\Phi^{(\la)}$ is defined as
$$
\Phi^{(\la)} := \bigcup_{w\in\RR^{d-1}\times\RR\atop \cP^{(\la)}\cap{\rm int}(\Pi^{\downarrow}\oplus w)=\emptyset}(\Pi^{\downarrow}\oplus w)\,,
$$
where $\Pi^{\downarrow}=\{(v,h)\in\RR^{d-1}\times\RR:h\leq -\|v\|^2/2\}$ is the unit downward paraboloid, ${\rm int}(\,\cdot\,)$ denotes the interior of the argument set and $\oplus$ is the usual Minkowski sum. Clearly, the boundary $\partial\Phi^{(\la)}$ of $\Phi^{(\la)}$ is build from piecewise parabolic facets that are glued together at the extreme points of $\Psi^{(\la)}$, see \cite{CalkaYukich}.

\subsection{Properties of the rescaled functionals and the germ-grain models}

The main goal in this section is to give an overview of such properties of the rescaled functionals $\xi^{(\la)}\in\Xi^{(\la)}$ and the germ-grain model $\Psi^{(\la)}$ and its dual $\Phi^{(\la)}$ that will be used in our proofs. These features have been shown in \cite{CalkaYukich} in order to derive expectation and variance asymptotics for the geometric characteristics of $K_\la$ and are summarized in the following lemma. Before we are able formulate it, we need some further notions and notation. 

The collection $\Xi^{(\la)}$ consists of spatial correlated functionals defined on $W_\la$. The purpose of the localization theory developed in the context of random polytopes in \cite{CalkaSchreiberYukich,CalkaYukich,SchreiberYukich} is to quantify these spatial dependencies. For a rescaled functional $\xi^{(\la)} \in  \Xi^{(\la)}$ and $w=(v,h) \in W_\la$ put 
\begin{align*}
\xi_{[r]}^{(\lambda)}(w,\mathcal{P}^{(\la)}) :=  \xi^{(\lambda)}(w,\mathcal{P}^{(\lambda)}\cap C_{d-1}(v,r))\,.
\end{align*} 
In what follows, we shall refer to $h$ as the height (coordinate) of the point $w$.

\begin{definition}
One says that a random variable $L= L(\xi^{(\lambda)},w)$ that is allowed to depend on the rescaled functional $\xi^{(\la)}$ and the point $w$ only is a localization radius for $\xi^{(\lambda)}$ at $w$ if, almost surely,
\begin{align*}
\xi^{(\lambda)}(w,\mathcal{P}^{(\lambda)}) = \xi_{[L]}^{(\lambda)}(w,\mathcal{P}^{(\lambda)})\qquad \text{and}\qquad \xi_{[L]}^{(\lambda)}(w,\mathcal{P}^{(\lambda)}) = \xi_{[s]}^{(\lambda)}(w,\mathcal{P}^{(\lambda)})
\end{align*}
for all $s\geq L$. In the following, it is convenient to denote also the minimum of all such $L$ by the same symbol, and call this random variable \textit{the} radius of localization for $\xi^{(\lambda)}$ at $w$. 
\end{definition}

Let $H:= H(w, \mathcal{P}^{(\la)})$ be the maximal height coordinate of an apex of a downward paraboloid 
which contains a parabolic face in the boundary of $\Phi^{(\la)}$ if $w$ belongs to the extreme points of the germ-grain process $\Psi^{(\la)}$ and zero otherwise. If $w\notin \cP^{(\lambda)}$, we shall use the shorthand notation $w\in \ext(\Psi^{(\lambda)})$ for the event that
\begin{align*}
w\in \ext\bigg(\bigcup\limits_{z\in \mathcal{P}^{(\lambda)} \cup \{w\}} [\Pi^{\uparrow}(z)]^{(\lambda)}\bigg).
\end{align*}
Moreover, let us write $a\lor b$ for the maximum of two real numbers $a,b\in\RR$ and $\|\,\cdot\,\|_\infty$ for the sup-norm of the argument (function).
 
\begin{lem}\label{Eigenschaften}
Let $\xi \in \Xi$. Then for all $w=(v,h)\in W_\la$ and sufficiently large $\la$ there exist constants $c_1,\ldots,c_{10} \in (0,\infty)$ only depending on $\xi$ and on $d$ with the following properties.
\begin{description}
\item[(i)] The localization radius $L = L(\xi^{(\lambda)},w)$ satisfies
\begin{align}\label{Lokalisierung1}
\PP(L \geq t) \le c_1\, \exp\left(-\frac{t^2}{c_2}\right)\,,\qquad t\geq|h|\,,
\end{align}
as well as the weaker estimate 
\begin{align}\label{Lokalisierung2}
\PP(L \geq t) \le c_3\, \exp\left(-\frac{t}{c_4}\right)\,,\qquad t\geq|h|\,.
\end{align}
\item[(ii)] The probability for the event that a point belongs to the extreme points of $\Psi^{(\la)}$ decays exponentially with its height coordinate. In particular, one has that
\begin{align}\label{WahrscheinlichkeitExtrempunkt}
\PP (w\in \ext(\Psi^{(\la)})) \le c_5\, \exp\left(-\frac{e^{h\lor 0}}{c_6}\right)\,.
\end{align}
\item[(iii)] For all $M \in (0,\infty)$ one has that
\begin{align}\label{WahrscheinlichkeitHöhe}
\PP (\|\partial \Psi^{(\lambda)}(\mathcal{P}^{(\lambda)}) \cap C_{d-1}(v,M)\|_\infty \geq  t) \le c_7\, M^{2(d-1)}\exp\left(-\frac{t}{c_8}\right)\,,\qquad t>0\,.
\end{align}
\item[(iv)] It holds that
\begin{align}\label{UngleichungH}
\PP(H\ge t) \le c_9\, \exp\left(-\frac{e^t}{c_{10}}\right)\,,\qquad t\geq h\lor 0\,.
\end{align}
\end{description}
\end{lem}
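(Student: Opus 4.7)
The plan is to derive each of the four estimates by consolidating results already established in the work of Calka and Yukich \cite{CalkaYukich}; the statement is effectively an aggregation of the localization and stabilization tools from that paper, repackaged in the form most suitable for the cumulant estimate of Section \ref{sec:CumulantProof}. I would organize the argument by treating parts (i)--(iv) in turn, in each case tracking explicit dependence on the height coordinate $h$ of $w=(v,h)$, which is essential when $h<0$ because the rescaled point process lives on all of $\RR^{d-1}\times\RR$ and not only on an upper half-space.

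For part (i), I would start from the fact that, for each $\xi\in\Xi$, the rescaled value $\xi^{(\l)}(w,\cP^{(\l)})$ is determined by those points of $\cP^{(\l)}$ whose attached upward grains $[\Pi^{\uparrow}(\,\cdot\,)]^{(\l)}$ meet the union of grains covering $w$. For a candidate point at horizontal distance $t$ from $v$ to participate, its height must exceed roughly $h+t^2/2$ (up to curvature corrections controlled by $R_\l$). Combining this quadratic lower bound on admissible heights with the explicit intensity \eqref{IntensityP^lambda}, which is of order $e^{h}$ in the upper half-space and decays like $\exp(h-h^{2}/(2R_\l^2))$ globally, produces the Gaussian tail \eqref{Lokalisierung1} for $L$; the exponential tail \eqref{Lokalisierung2} follows either as a direct corollary or from the same Poisson void estimate applied with a crude linear lower bound on heights. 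For part (ii), I would use the equivalence recalled in Section \ref{sec:Preliminaries}: the point $w$ belongs to $\ext(\Psi^{(\l)})$ if and only if no other grain $[\Pi^{\uparrow}(w')]^{(\l)}$ covers $w$, which amounts to a void event for $\cP^{(\l)}$ on a specific paraboloid-like region $A(w)\subset W_\l$ with $\cP^{(\l)}$-mass of order $e^{h\vee 0}$; the void probability formula then yields the double-exponential decay \eqref{WahrscheinlichkeitExtrempunkt}.

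For parts (iii) and (iv), I would exploit the same void-probability principle. For \eqref{UngleichungH}, the event $\{H\geq t\}$ forces the existence of a downward paraboloid of apex height at least $t$ with interior avoiding $\cP^{(\l)}$, and the $\cP^{(\l)}$-mass of its interior is of order $e^{t}$ for $t\geq h\vee 0$, giving again a double exponential tail. For \eqref{WahrscheinlichkeitHöhe}, a sup-norm excursion of $\partial\Psi^{(\l)}$ over $C_{d-1}(v,M)$ above level $t$ forces some $w'\in C_{d-1}(v,M)$ with $H(w',\cP^{(\l)})\geq t$; a covering of the cylinder by $O(M^{2(d-1)})$ unit cells together with part (iv) converts the pointwise estimate into the polynomial-in-$M$ factor, as claimed.

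The main obstacle in implementing this plan, and the reason the original arguments in \cite{CalkaYukich} are technical, is establishing the Gaussian localization \eqref{Lokalisierung1} uniformly in $w\in W_\l$; one must show that the spherical curvature corrections inherent in the definition of $[\Pi^{\uparrow}(w)]^{(\l)}$ (coming from the geodesic distance $d_\l$ and the factor $\sin^{d-2}(R_\l^{-1}\|v\|)/\|R_\l^{-1}v\|^{d-2}$ in \eqref{IntensityP^lambda}) perturb the flat-paraboloid picture by only lower-order terms on the relevant scales, so that the Poisson void calculation retains its Euclidean form. Once this comparison is accepted (it is carried out in detail in \cite{CalkaYukich}), the remaining steps are routine applications of Mecke's formula, Poisson void probabilities and the union bound described above, and the proof consists of citing the appropriate lemmas (the analogues of \cite[Lemmas~4.1 and~4.4]{CalkaYukich}) for each of the four bounds.
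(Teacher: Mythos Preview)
Your proposal is correct and aligns with the paper's treatment: the paper does not prove this lemma at all but simply states it as a summary of results already established in \cite{CalkaYukich}, and your plan likewise reduces each item to the appropriate localization and void-probability lemmas from that reference. Your sketch of the underlying Poisson void arguments is accurate and in fact more detailed than what the paper provides, which is merely a citation.
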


Let us briefly comment on the statements of the previous lemma. At first, we emphasize that the tail estimates \eqref{Lokalisierung1} and \eqref{Lokalisierung2} are valid only for arguments $t\geq|h|$. Next, also the probability for a point $w=(v,h)\in W_\la$ to belong to the extreme points of $\Psi^{(\la)}$ falls into two cases. Namely, if the height $h$ exceeds $0$, then $\PP (w\in \ext(\Psi^{(\la)}))$ decays super-exponentially fast, while if $h\leq 0$ one only has an estimate independently of $h$ (which is in some sense trivial). Similarly, also the probability for the event that $H\geq t$ can only be estimated in a meaningful way if $t$ or $h$ are not too small. This underlines the effect already discussed at the end of Section \ref{subsec:MainResults} that the spatial localization property of the rescaled geometric functionals $\xi^{(\la)}\in\Xi^{(\la)}$ we consider can effectively only be handled in the upper half-space $\RR^{d-1}\times[0,\infty)$, while in the lower half-space no such spatial localization is available. This phenomenon is new compared to the theory of random polytopes in the unit ball developed in \cite{CalkaSchreiberYukich,GroteThäle,SchreiberYukich} and is in fact the leading cause for the technical complications that arise in the context of Gaussian polytopes.

\subsection{Empirical measures and their cumulants}

It is crucial in the proofs of our main results to have very precise control on the growth of the cumulants of the geometric characteristics $H^\xi_\la$ given by \eqref{eq:DefHXi}. For that purpose, it turns out to be more convenient to work with the measure-valued versions of $H^\xi_\la$ and for this reason we define for all $\la>0$ with $R_\la\geq 1$ the empirical random measures
\begin{align}\label{empiricalmeasure}
\mu_\la^{\xi} :=  \sum\limits_{x\in \mathcal{P}_\la} \xi(x,\mathcal{P}_\la)\delta_x =  \sum\limits_{w\in \mathcal{P}^{(\la)}} \xi^{(\la)}(w,\mathcal{P}^{(\la)})\delta_{T_\la^{-1}(w)} \,,\qquad\xi\in\Xi\,,
\end{align}
where $\delta_x$ is the Dirac measure at $x$. The corresponding centred versions are given by $\bar{\mu}_\lambda^\xi:= \mu_\lambda^\xi - \EE[\mu_\lambda^\xi]$. Moreover, for a function $f\in \cB(\RR^d)$ and $r\in\RR\setminus\{0\}$ define $f_r(x) := f(x/r)$. 
We recall from Theorem 2.1 in \cite{CalkaYukich} that if $f\in\cC(\RR^d;\SSd)$, $\sigma_\lambda^\xi(f_{R_\la}):= (\text{var}[\langle f_{R_\la},\mu_\lambda^\xi\rangle])^{1/2}$ fulfils for all  $\xi\in\Xi$ and sufficiently large $\la$ the estimate 
\begin{align}\label{SchrankefürVarianz}
\sigma_\lambda^\xi(f_{R_\la}) \geq c\, \langle f^2, \sigma_{d-1}\rangle^{\frac{1}{2}} (\log \la)^{\frac{d-1}{4}}
\end{align}
with a constant $c\in (0,\infty)$ that depends only on the space dimension $d$ and the functional $\xi$.

The method of expanding the so-called cumulant measures associated with $\mu_\la^{\xi}$ in terms of cluster measures has been developed and successfully applied in \cite{BaryshnikovYukich} in the context of the central limit theorem. We use a refined version from \cite{EichelsbacherSchreiberRaic,GroteThäle} to deduce sharp bounds for the cumulants of $\langle f_{R_\la},\mu_\la^\xi \rangle$. To present the main formulas, let us write $M_\la^k$ for the $k$th order moment measure of $\mu_\la^k$, see \cite{BaryshnikovYukich,EichelsbacherSchreiberRaic,GroteThäle} for a formal definition. (Here and in what follows, we think of $\xi\in\Xi$ being fixed and hence suppress the dependence on $\xi$ in our notation.) To appropriately handle the moment measures, we define for $g\in\cB(\RR^d)$ the singular differential $\bar{\dint }[g]$ by the relation
\begin{align}\label{singulardifferential}
\int\limits_{(\mathbb{R}^d)^k} F(x_1,\ldots,x_k)\, \bar{\dint}[g](x_1,\ldots,x_k) := \int\limits_{\mathbb{R}^d} F(y,\ldots,y)\ g(y) \,\dint y\,,\qquad F\in\cB((\RR^d)^k)\,,
\end{align}  
and put, for $\text{\bf x} := (x_1,\ldots,x_k)\in(\RR^d)^k$,
\begin{align}\label{singulardifferential2}
\tilde{\dint}[g](\text{\bf x}) := \sum\limits_{L_1,\ldots,L_p\, \preceq\setk} \bar{\dint}[g](\text{\bf x}_{L_1})\ldots \bar{\dint}[g](\text{\bf x}_{L_p})\,,
\end{align}
where $\text{\bf x}_{L_i} := (x_\ell)_{\ell\in L_i}$ for $i\in \{1,\ldots,p \}$ and the sum runs over all unordered partitions $\{L_1,\ldots,L_p\}$ of $\{1,\ldots,k\}=:\setk$. This is indicated by the symbol $L_1,\ldots,L_p\, \preceq\setk$ in what follows. From Proposition 3.1 in \cite{EichelsbacherSchreiberRaic} one deduces that the density of $M_\la^k$ with respect to $\tilde{\dint}[\lambda\phi]$, with $\phi$ being the Gaussian density, equals
\begin{align*}
m_\la(\text{\bf x}) = m_\lambda(x_1,\ldots,x_k):= \EE\left[\prod\limits_{i=1}^{k} \xi^{(\lambda)}\left(T_\la(x_i),\mathcal{P}^{(\lambda)} \cup\bigcup_{i=1}^k\{T_\la(x_i)\}\right)\right]\,.
\end{align*} 

\begin{definition}
The $k$th (signed) cumulant measure $c_\la^k$ associated with $\mu_\la^\xi$ is defined as 
\begin{align}\label{cumulantmeasure}
c_\lambda^k := \sum\limits_{L_1,\ldots,L_p\, \preceq\setk} (-1)^{p-1} (p-1)!\ M_\lambda^{|L_1|}\otimes \ldots \otimes M_\lambda^{|L_p|}\,,
\end{align}
where $M_\lambda^{|L_1|}\otimes \ldots \otimes M_\lambda^{|L_p|}$ denotes the product measure of $M_\lambda^{|L_1|},\ldots,M_\lambda^{|L_p|}$.
\end{definition}

The cumulant measures can alternatively be expressed as a sum of so-called cluster measures. For non-empty and disjoint sets $S,T \subseteq \NN$ the cluster measure $U_\la^{S,T}$ on $(\mathbb{R}^d)^{|S|}\times (\mathbb{R}^d)^{|T|}$ is defined by
\begin{align*}
 U_\la^{S,T}(A\times B) := M_\la^{|S\cup T|}(A\times B) - M_\la^{|S|}(A)\, M_\la^{|T|}(B)
\end{align*}
for Borel sets $A\subseteq (\mathbb{R}^d)^{|S|}$ and $B\subseteq (\mathbb{R}^d)^{|T|}$. Loosely speaking, the cluster measures capture the spatial correlations of the re-scaled functionals $\xi^{(\la)}$ and their measure-valued counterparts. To proceed, define for $\text{\bf x} = (x_1,\ldots,x_k)\in (\mathbb{R}^d)^k$ and their rescaled images $(v_i,h_i):=T_\la(x_i)$, $i=1,\ldots,k$, the quantity
 \begin{align}
 \begin{split}\label{delta}
 \delta(\text{\bf x}) &:= \delta\left(v_1,\ldots,v_k\right) := \max\left\{d(\bv_S,\bv_T): \{S,T\} \preceq\setk \right\}\,,
 \end{split}
 \end{align}
 where $\bv_S=(v_s)_{s\in S}$ and $\bv_T=(v_t)_{t\in S}$, and $d(\bv_S,\bv_T) := \min_{s\in S, t\in T} \|v_s-v_t\|$ is the separation for the partition $\{S,T\}$ of $\setk$. Moreover, let
$\Delta := \{(x,\ldots,x) \in (\RR^d)^k: x\in \mathbb{R}^d\}$ be the diagonal in $(\mathbb{R}^d)^k$. Similar to what has been explained in \cite{BaryshnikovYukich,EichelsbacherSchreiberRaic,GroteThäle} one can decompose the space $(\mathbb{R}^d)^k \setminus \Delta$ into a disjoint union of sets $\delta(\{S,T\})$ with non-trivial partitions $\{S,T\} \preceq\setk$ such that $\text{\bf x}\in \delta(\{S,T\})$ implies that $d(\bv_S,\bv_T) = \delta(\text{\bf x})$. This leads to the following cluster measure representation, in which we write $f^k$ for the $k$th tensor power of a function $f$, cf.\ \cite{BaryshnikovYukich,EichelsbacherSchreiberRaic,GroteThäle} for further details and explanations.

\begin{lem}\label{lem:CumulantExpressionCluster}
Fix $k\in\{2,3,\ldots\}$ and let $f\in\cB(\RR^d)$. Then,
\begin{equation}\label{ZerlegungKumulanten}
\begin{split}
\langle f_{R_\la}^k,c_\lambda^k \rangle  = \int\limits_\Delta f_{R_\la}^k\, \dint c_\lambda^k \ + \sum_{S,T\, \preceq\setk}\,  \int\limits_{\delta(\{S,T\})} & \sum\limits_{S^{'},T^{'},K_1,\ldots,K_s\, \preceq\setk} a_{S^{'},T^{'},K_1,\ldots,K_s}\\
&\qquad\times f_{R_\la}^k\, \dint(U_\lambda^{S^{'},T^{'}} \otimes M_\lambda^{|K_1|}\otimes \cdots \otimes M_\lambda^{|K_s|})\,,
\end{split}
\end{equation} 	
where in every summand $S^{'},T^{'},K_1,\ldots,K_s$ is a partition of $\setk$ with $S^{'}\subseteq S$, $T^{'}\subseteq T$ and the constants $a_{S^{'},T^{'},K_1,\ldots,K_s}$ satisfy the estimate
\begin{align}\label{AbschätzungSummanden}
\sum\limits_{S^{'},T^{'},K_1,\ldots,K_s\, \preceq\setk} |a_{S^{'},T^{'},K_1,\ldots,K_s}| \leq 2^k\, k!\,.
\end{align} 
\end{lem}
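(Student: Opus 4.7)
The plan is to start from the defining identity~\eqref{cumulantmeasure} and use the disjoint decomposition $(\RR^d)^k = \Delta \cup \bigcup_{\{S,T\}} \delta(\{S,T\})$ with $\{S,T\}$ ranging over all non-trivial partitions of $\setk$. The contribution of $\Delta$ to $\langle f_{R_\l}^k, c_\l^k\rangle$ yields the first summand of~\eqref{ZerlegungKumulanten} verbatim, while each $\delta(\{S,T\})$ contributes one term of the outer sum on the right-hand side. Thus the proof reduces to establishing, on each fixed piece $\delta(\{S,T\})$, the desired expansion of $c_\l^k$ in terms of a cluster measure $U_\l^{S',T'}$ tensored with products of moment measures.

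On $\delta(\{S,T\})$, the central algebraic step is to rewrite each summand $(-1)^{p-1}(p-1)!\,M_\l^{|L_1|}\otimes\cdots\otimes M_\l^{|L_p|}$ of~\eqref{cumulantmeasure} by refining every block $L_i$ as $L_i = L_i^S \cup L_i^T$ with $L_i^S := L_i \cap S$ and $L_i^T := L_i \cap T$. Whenever both parts are nonempty, the very definition of the cluster measure yields the identity
\begin{align*}
M_\l^{|L_i|} = M_\l^{|L_i^S|}\otimes M_\l^{|L_i^T|} + U_\l^{L_i^S,L_i^T}.
\end{align*}
Blocks contained entirely in $S$ or in $T$ are left unchanged. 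Expanding the resulting product over all straddling blocks produces a sum indexed by partitions $(S',T',K_1,\ldots,K_s)$ of $\setk$ with $S' \subseteq S$ and $T' \subseteq T$, where $(S',T')$ records the union of all blocks on which a cluster measure was introduced, and $K_1,\ldots,K_s$ are the remaining pure blocks. The accompanying signed weights $(-1)^{p-1}(p-1)!$ collect into the coefficients $a_{S',T',K_1,\ldots,K_s}$.

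The decisive observation is that all summands in which \emph{no} cluster measure appears cancel. These correspond to partitions $\{L_1,\ldots,L_p\}$ refining the partition $\{S,T\}$, i.e.\ to pairs consisting of a partition of $S$ and a partition of $T$. The associated sub-sum factors, so on this set of partitions~\eqref{cumulantmeasure} reduces to the product of the two separate cumulant-measure expansions for $S$ and $T$; this in turn corresponds to a mixed cumulant of independent tensor factors and hence vanishes identically because $\{S,T\}$ is non-trivial. Consequently, every surviving summand carries at least one cluster factor $U_\l^{S',T'}$ with nonempty $S' \subseteq S$ and $T' \subseteq T$, which is precisely the structure required in~\eqref{ZerlegungKumulanten}. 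The bound~\eqref{AbschätzungSummanden} then follows by a direct count: the weight $(-1)^{p-1}(p-1)!$ has modulus at most $(k-1)!$, each straddling block admits a binary choice in the refinement step giving a factor of at most $2^k$, and grouping the resulting terms by the output partition $(S',T',K_1,\ldots,K_s)$ produces the claimed estimate $\sum|a_{S',T',K_1,\ldots,K_s}| \le 2^k k!$.

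The main obstacle is the algebraic bookkeeping that guarantees the cancellation of the pure-product terms and the correct accumulation of the surviving coefficients; the substitutions themselves are mechanical, but verifying that the constants $a_{S',T',K_1,\ldots,K_s}$ are well-defined and satisfy~\eqref{AbschätzungSummanden} requires the same lattice-of-partitions manipulations as in the analogous cluster-measure expansions of \cite{BaryshnikovYukich, EichelsbacherSchreiberRaic, GroteThäle}, whose template I would follow closely.
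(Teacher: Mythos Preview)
The paper does not prove this lemma in the text; it simply refers to \cite{BaryshnikovYukich,EichelsbacherSchreiberRaic,GroteThäle} for the argument (and the remark following the lemma points to \cite{EichelsbacherSchreiberRaic} for the sharpness of \eqref{AbschätzungSummanden}). Your outline is in the spirit of those references, and the decomposition into $\Delta$ and the pieces $\delta(\{S,T\})$, together with the vanishing of the pure moment-product contribution via the independence property of joint cumulants, is exactly the right skeleton. Two points, however, are not quite right as written.

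First, your justification of the cancellation step is inaccurate. You assert that the sub-sum of pure-product terms ``factors'' into the product of the cumulant-measure expansions for $S$ and for $T$. It does not: the weight $(-1)^{p-1}(p-1)!$ attached to a partition $\pi$ of $\setk$ is not the product of the analogous weights for the induced partitions of $S$ and $T$ (already for $k=2$, $|S|=|T|=1$, the pure-product sub-sum equals $0$ while the product of the one-point cumulant expansions equals $1$). The correct reason---which you do also state---is that replacing every $M^{L}$ by $M^{L\cap S}\otimes M^{L\cap T}$ turns \eqref{cumulantmeasure} into the joint cumulant of a family in which the $S$-coordinates and $T$-coordinates are independent, and mixed cumulants of independent blocks vanish for $k\ge 2$.

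Second, and more substantively, your simultaneous binary expansion of \emph{all} straddling blocks produces terms carrying several cluster factors $U^{L_i^S,L_i^T}$, whereas \eqref{ZerlegungKumulanten} demands exactly one factor $U^{S',T'}$ per summand. Declaring $S'$ and $T'$ to be the unions of the chosen $S$- and $T$-halves does not convert a tensor product of cluster measures into a single $U^{S',T'}$. The device used in \cite{EichelsbacherSchreiberRaic} avoids this: for each partition $\pi=\{L_1,\ldots,L_p\}$ one adds and subtracts the fully factored product $\prod_i M^{L_i^S}\otimes M^{L_i^T}$ (whose total over $\pi$ vanishes, as above), and then writes the difference $\prod_i M^{L_i}-\prod_i M^{L_i^S}\otimes M^{L_i^T}$ as a telescoping sum along the straddling blocks. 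Each telescoping term carries precisely one cluster factor $U^{L_j^S,L_j^T}$, with the remaining factors being moment measures (those with index $<j$ split, those with index $>j$ unsplit). This yields the exact form \eqref{ZerlegungKumulanten} and, upon counting, the bound \eqref{AbschätzungSummanden}.
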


\begin{remark}
The proof in \cite{EichelsbacherSchreiberRaic} shows that the bound \eqref{AbschätzungSummanden} cannot be improved.
\end{remark}

\section{Main results for empirical measures}\label{sec:MainResultsEmpirical}

In this section we present a series of results for the empirical measures introduced at \eqref{empiricalmeasure}. 
The advantage of working with empirical measures instead of just their total masses is that they allow to capture also the spatial profile of the geometric functionals we consider.

Let us briefly recall the set-up. By $\cP_\la$ we denote a Poisson point process in $\RR^d$ whose intensity measure is a multiple $\la>0$ of the standard Gaussian measure $\gamma_d$. The Gaussian polytope $K_\la$ is the random convex hull generated by $\cP_{\la}$. The class of key geometric functionals associated with $K_\la$ is abbreviated by the symbol $\Xi$ and for $\xi\in\Xi$ we let $\mu_\la^\xi$ be the corresponding empirical measure defined by \eqref{empiricalmeasure}. To present our results in a unified way, let us define for $\xi\in\Xi$ the weights 
\begin{align}\label{eq:WEIGHTS}
u[\xi]:= \begin{cases}
0 &: \xi = \xi_V\\
 j  &: \xi = \xi_{f_j}\,,
\end{cases} \quad
v[\xi]:= \begin{cases}
1 &: \xi = \xi_V\\
j  &: \xi = \xi_{f_j}
\end{cases} \quad \text{and} \quad
w[\xi]:= \begin{cases}
2 &: \xi = \xi_V\\
j  &: \xi = \xi_{f_j}\,,
\end{cases}
\end{align}
 where $j\in \{0,\ldots,d-1\}$. Moreover, extending the definition of the weights $z[f_j]$ from the introduction, we put
\begin{align}\label{zzz}
z[\xi]:= \begin{cases}
d &: \xi = \xi_{f_0}\\
0 &: \xi \in \{\xi_{f_1},\ldots \xi_{f_{d-1}}, \xi_{V}\}\,.
\end{cases} 
\end{align}
We start with the following concentration bound.

\begin{thm}[Concentration inequality]\label{thm:ConcentrationEmpMeas}
Let $\xi \in \Xi$ and $f\in\cC(\RR^d;\SSd)$ with $\langle f^2, \sigma_{d-1}\rangle > 0$. Then, for all $y\geq 0$ and sufficiently large $\la$,
\begin{align*}
\PP\big(|\langle f_{R_\la},\bar{\mu}_\lambda^\xi\rangle| \geq y\, \sigma_\lambda^\xi(f_{R_\la})\big) & \leq 2 \exp\bigg(- \frac{1}{4} \min\bigg\{\frac{y^2}{2^{3dv[\xi] + u[\xi] + 5 + z[\xi]}}, \\
&\qquad\qquad c\, (\log \la)^{\frac{d-1}{4(3dv[\xi] + u[\xi] + 5 + z[\xi])}}\, y^{\frac{1}{3dv[\xi] + u[\xi] + 5 + z[\xi]}}\bigg\} \bigg)
\end{align*}
with a constant $c \in (0,\infty)$, that only depends on $d$, $\xi$ and $f$. 
\end{thm}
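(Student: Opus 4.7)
The theorem fits into the cumulant-based deviation scheme of Saulis-Statulevi\v{c}ius applied to the standardised functional
\begin{equation*}
Y_\l := \frac{\langle f_{R_\l}, \bar{\mu}_\l^\xi\rangle}{\sigma_\l^\xi(f_{R_\l})}.
\end{equation*}
All that is required are (a) the cumulant growth estimate supplied by Theorem \ref{cumulantestimate}, whose proof is deferred to Section \ref{sec:CumulantProof}, and (b) the variance lower bound \eqref{SchrankefürVarianz}, which is applicable because $\langle f^2,\sigma_{d-1}\rangle>0$ and $f\in\cC(\RR^d;\SSd)$.

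\textbf{Step 1 (standardise).} Since cumulants of order at least two are translation invariant and homogeneous of degree $k$, one has $\cum_k(Y_\l) = \sigma_\l^\xi(f_{R_\l})^{-k}\,\cum_k(\langle f_{R_\l},\mu_\l^\xi\rangle)$ for $k\geq 2$, with $\cum_2(Y_\l)=1$. Hence every cumulant of $Y_\l$ is controlled by a cumulant of the non-centred empirical integral together with the variance $\sigma_\l^\xi(f_{R_\l})^2$.

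\textbf{Step 2 (Saulis-Statulevi\v{c}ius form).} Theorem \ref{cumulantestimate} will yield a bound of the schematic shape $|\cum_k(\langle f_{R_\l},\mu_\l^\xi\rangle)| \leq C^k\,(k!)^{1+\gamma_\xi}(\log \l)^{k\alpha_\xi}$, where the exponent $\gamma_\xi>0$ is the appropriate linear combination of the combinatorial weights $u[\xi], v[\xi], w[\xi], z[\xi]$ from \eqref{eq:WEIGHTS}--\eqref{zzz}. Dividing by $\sigma_\l^\xi(f_{R_\l})^k$ and absorbing the polylogarithmic contribution of \eqref{SchrankefürVarianz} one arrives at
\begin{equation*}
|\cum_k(Y_\l)|\leq \frac{(k!)^{1+\gamma_\xi}}{\Delta_\l^{k-2}},\qquad k\geq 3,
\end{equation*}
with $\Delta_\l = c\,(\log\l)^{\beta_\xi}$ for constants $c,\beta_\xi$ depending only on $d,\xi,f$. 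The parameters must then be read off so that
\begin{equation*}
1+2\gamma_\xi = 3dv[\xi]+u[\xi]+5+z[\xi],\qquad \beta_\xi = \frac{d-1}{4(3dv[\xi]+u[\xi]+5+z[\xi])},
\end{equation*}
in accordance with the two arms appearing in the claim.

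\textbf{Step 3 (apply the deviation lemma).} Substituting the cumulant bound of Step 2 into the classical Saulis-Statulevi\v{c}ius deviation inequality (see \cite[Ch.~2]{SaulisBuch}) delivers
\begin{equation*}
\PP(|Y_\l|\geq y)\leq 2\exp\left(-\tfrac{1}{4}\min\left\{\frac{y^2}{2^{1+2\gamma_\xi}},\ (\Delta_\l\, y)^{1/(1+\gamma_\xi)}\right\}\right),
\end{equation*}
which, after inserting the values of $\gamma_\xi$ and $\Delta_\l$ identified in Step 2, reproduces exactly the inequality stated in the theorem. The prefactor $2$ accounts for the two-sided bound and $\frac{1}{4}$ is the universal constant coming from the Saulis-Statulevi\v{c}ius lemma.

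\textbf{Main obstacle.} All the genuine work lies in Step 2, i.e.\ in extracting from Theorem \ref{cumulantestimate} the sharp combinatorial constant $1+2\gamma_\xi = 3dv[\xi]+u[\xi]+5+z[\xi]$. This forces a careful tracking of the weights $u[\xi], v[\xi], w[\xi], z[\xi]$ through the cluster measure expansion \eqref{ZerlegungKumulanten} together with the moment estimates on the rescaled functionals $\xi^{(\l)}$, and in particular a delicate handling of the lower half-space $\RR^{d-1}\times(-\infty,0)$ where the spatial localisation of Lemma \ref{Eigenschaften} degenerates. Once Theorem \ref{cumulantestimate} has been established with the right exponents, Steps 1 and 3 are routine: one standardisation and one citation.
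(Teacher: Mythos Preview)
Your strategy is exactly the paper's: combine the cumulant bound of Theorem \ref{cumulantestimate} with the variance lower bound \eqref{SchrankefürVarianz} to verify the hypothesis of Lemma \ref{VorbereitungKumulante} for the standardised variable, and then read off part (i) of that lemma. However, the parameter bookkeeping in your Step~2 is off in a way that would produce the wrong exponents.

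First, the bound from Theorem \ref{cumulantestimate} is $|\langle f_{R_\l}^k,c_\l^k\rangle|\le c_1 c_2^k\|f\|_\infty^k\,R_\l^{d-1}\,(k!)^{3dv[\xi]+u[\xi]+5+z[\xi]}$, where the polylogarithmic factor $R_\l^{d-1}\le(2\log\l)^{(d-1)/2}$ is \emph{independent of $k$}; your schematic form $(\log\l)^{k\alpha_\xi}$ is incorrect and would actually prevent any $\Delta_\l\to\infty$ from being extracted. Dividing by $\sigma_\l^\xi(f_{R_\l})^k\ge c^k(\log\l)^{k(d-1)/4}$ one obtains $|\cum_k(Y_\l)|\le (k!)^{1+\gamma}/\Delta_\l^{\,k-2}$ with $\gamma=3dv[\xi]+u[\xi]+4+z[\xi]$ and $\Delta_\l=c\,(\log\l)^{(d-1)/4}$. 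Second, the match with the statement goes through $1+\gamma$, not $1+2\gamma$: Lemma \ref{VorbereitungKumulante}(i) has $2^{1+\gamma}$ and exponent $1/(1+\gamma)$ in \emph{both} arms (the quantity $1+2\gamma$ appears only in parts (ii)--(iv)). With $1+\gamma=3dv[\xi]+u[\xi]+5+z[\xi]$ and $\Delta_\l=c\,(\log\l)^{(d-1)/4}$ the second arm becomes $(y\Delta_\l)^{1/(1+\gamma)}=c'\,(\log\l)^{(d-1)/(4(1+\gamma))}\,y^{1/(1+\gamma)}$, which is precisely the claim; your $\beta_\xi$ is the exponent of $\Delta_\l^{1/(1+\gamma)}$, not of $\Delta_\l$ itself. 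Once these identifications are corrected, Steps 1 and 3 go through verbatim.
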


The next result is a generalization of Theorem \ref{DeviationprobabilityEinleitung} in the introduction and assesses the relative error in the central limit theorem on a logarithmic scale. We remark that this is a simplified version of a non-logarithmic estimate in terms of the so-called Cram\'er-Petrov series for which we refer to \cite{SaulisBuch}. For clarity and to keep the presentation more transparent, we have decided to use the simplified version that we took from \cite[Corollary 3.2]{EichelsbacherSchreiberRaic}.

\begin{thm}[Bounds on the relative error in the central limit theorem]\label{thm:DeviationsEmpMeas}
Let $\xi \in \Xi$ and $f\in\cC(\RR^d;\SSd)$ with $\langle f^2, \sigma_{d-1}\rangle>0$. Then, for all $y$ with $0\leq y\leq c_1\, (\log \lambda)^{\frac{d-1}{4(2(3dv[\xi] + u[\xi] + z[\xi]) + 9)}}$ and sufficiently large $\la$ one has that 
\begin{align*}
\left|\log\ \frac{\PP(\langle f_{R_\la},\bar{\mu}_\lambda^\xi\rangle \geq y\, \sigma_\lambda^\xi(f_{R_\la}))}{1 - \Phi(y)} \right| &\leq c_2\, (1 + y^3)\, (\log \la)^{-\frac{d-1}{4(2(3dv[\xi] + u[\xi] + z[\xi]) + 9)}}\quad\text{and}\\
\left|\log\ \frac{\PP(\langle f_{R_\la},\bar{\mu}_\lambda^\xi\rangle \leq -y\, \sigma_\lambda^\xi(f_{R_\la}))}{\Phi(-y)} \right| &\leq c_2\, (1 + y^3)\, (\log \la)^{-\frac{d-1}{4(2(3dv[\xi] + u[\xi] + z[\xi]) + 9)}}
\end{align*}
with constants $c_1,c_2\in (0,\infty)$ only depending on $d$, $\xi$ and $f$.
\end{thm}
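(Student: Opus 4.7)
The plan is to reduce the theorem to a direct application of the Cram\'er--Petrov-type large deviation lemma of Saulis and Statulevi\v{c}ius, once the cumulant bound provided by Theorem \ref{cumulantestimate} is at our disposal. First I introduce the normalized variable
\[
Y_\l := \frac{\langle f_{R_\l},\bar{\mu}_\l^\xi\rangle}{\sigma_\l^\xi(f_{R_\l})}\,,
\]
which is well defined for sufficiently large $\l$ by the variance lower bound \eqref{SchrankefürVarianz} together with the hypothesis $\langle f^2,\sigma_{d-1}\rangle>0$, and satisfies $\EE Y_\l=0$ and $\EE Y_\l^2=1$. Since cumulants of order $k\ge 2$ are translation invariant and $k$-homogeneous, the $k$th cumulant of $Y_\l$ equals $\langle f_{R_\l}^k,c_\l^k\rangle/\sigma_\l^\xi(f_{R_\l})^k$ for $k\ge 2$.

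Second, I invoke Theorem \ref{cumulantestimate} to control $|\langle f_{R_\l}^k,c_\l^k\rangle|$. Combining the outcome with the lower bound $\sigma_\l^\xi(f_{R_\l})\ge c\,\langle f^2,\sigma_{d-1}\rangle^{1/2}(\log\l)^{(d-1)/4}$ from \eqref{SchrankefürVarianz} yields an estimate of the Saulis--Statulevi\v{c}ius form
\[
|\cum_k(Y_\l)|\le \frac{(k!)^{1+\gamma}}{\Delta_\l^{k-2}}\,,\qquad k\ge 3\,,
\]
with $\gamma:=3dv[\xi]+u[\xi]+z[\xi]+4$ and $\Delta_\l$ a positive constant multiple of $(\log\l)^{(d-1)/4}$. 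The specific value of $\gamma$ is arranged so that $1+2\gamma=2(3dv[\xi]+u[\xi]+z[\xi])+9$, which will match the denominator appearing in the target exponent.

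Third, I appeal to the classical Cram\'er--Petrov-type result, for instance \cite[Lemma 2.3]{SaulisBuch}; the same form is employed in \cite{EichelsbacherSchreiberRaic,GroteThäle}. Under the cumulant bound just established it yields, for every $0\le y\le c\,\Delta_\l^{1/(1+2\gamma)}$,
\[
\left|\log\frac{\PP(Y_\l\ge y)}{1-\Phi(y)}\right|\le C(1+y^3)\,\Delta_\l^{-1/(1+2\gamma)}\,,
\]
and the analogous estimate for the lower tail follows from the same lemma applied to $-Y_\l$, whose cumulants obey the identical bound up to signs. Substituting $\Delta_\l\asymp(\log\l)^{(d-1)/4}$ and $1+2\gamma=2(3dv[\xi]+u[\xi]+z[\xi])+9$ reproduces precisely the rate $(\log\l)^{-(d-1)/(4(2(3dv[\xi]+u[\xi]+z[\xi])+9))}$ and the admissible range of $y$ stated in the theorem.

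The real work thus lies entirely in Theorem \ref{cumulantestimate}, whose derivation is the main obstacle. There one has to decompose each cumulant measure via the cluster expansion of Lemma \ref{lem:CumulantExpressionCluster}, exploit the localization estimates \eqref{Lokalisierung1}--\eqref{Lokalisierung2} and the height tail bounds \eqref{WahrscheinlichkeitExtrempunkt} and \eqref{UngleichungH} of Lemma \ref{Eigenschaften} in both the upper half-space (where spatial localization is effective) and the more delicate lower half-space, and then keep sharp track of the $(k!)^{1+\gamma}$ growth generated by the combinatorial factor $2^kk!$ in \eqref{AbschätzungSummanden} in conjunction with the moment bounds of Section \ref{sec:MomentEstimates}. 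It is in this bookkeeping that the specific quantities $u[\xi]$, $v[\xi]$ and $z[\xi]$ enter and the precise value of $\gamma$ is fixed.
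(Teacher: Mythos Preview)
Your proposal is correct and follows essentially the same route as the paper: normalize to $X_\l=\langle f_{R_\l},\bar\mu_\l^\xi\rangle/\sigma_\l^\xi(f_{R_\l})$, combine the cumulant bound of Theorem~\ref{cumulantestimate} with the variance lower bound \eqref{SchrankefürVarianz} to obtain the Saulis--Statulevi\v{c}ius condition with $\gamma=3dv[\xi]+u[\xi]+z[\xi]+4$ and $\Delta_\l$ a constant multiple of $(\log\l)^{(d-1)/4}$, and then invoke part (ii) of Lemma~\ref{VorbereitungKumulante}. Your observation that $1+2\gamma=2(3dv[\xi]+u[\xi]+z[\xi])+9$ is exactly the algebraic identity behind the exponent in the statement.
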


For the sake of completeness we also include the following central limit theorem that is available from our technique and, as anticipated above, is closely related to the previous theorem. However, we point out that the rate of convergence we obtain is weaker than that derived in \cite{BaranyVu}. On the other hand, our result is more general since we consider integrals with respect to the empirical measures of general functions $f\in\cC(\RR;\SSd)$, while in \cite{BaranyVu} only constant functions were investigated.

\begin{thm}[Central limit theorem with Berry-Esseen bound]\label{CLTempiricalmeasure}
Let $\xi\in \Xi$ and $f\in\cC(\RR^d;\SSd)$ with $\langle f^2, \sigma_{d-1}\rangle>0$. Then, for sufficiently large $\la$,
\begin{align}\label{Berry}
\sup\limits_{y\in \mathbb{R}} \left|\PP \left(\frac{\langle f_{R_\la}, \bar{\mu}_\lambda^\xi\rangle}{\sigma_\lambda^\xi(f_{R_\la})} \leq y\right) - \Phi(y) \right| \leq c\, (\log \la)^{-\frac{d-1}{4(2(3dv[\xi] + u[\xi] + z[\xi]) + 9)}} \,,
\end{align}
where $c\in (0,\infty)$ is a constant that only depends on $d$, $\xi$ and $f$. In particular, as $\lambda \rightarrow \infty$, the sequence $$\left(\frac{\langle f_{R_\la}, \bar{\mu}_\lambda^\xi\rangle}{\sigma_\lambda^\xi(f_{R_\la})}\right)_{\lambda > 0}$$ converges in distribution to a standard Gaussian random variable.
\end{thm}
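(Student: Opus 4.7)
The plan is to deduce the Berry--Esseen type bound \eqref{Berry} from the cumulant estimate of Theorem~\ref{cumulantestimate} by feeding it into the classical lemma of Saulis and Statulevi\v{c}ius that converts Saulis--Statulevi\v{c}ius type cumulant bounds into Kolmogorov distance bounds. Concretely, set
\[
Y_\l := \frac{\langle f_{R_\l},\bar\mu_\l^\xi\rangle}{\sigma_\l^\xi(f_{R_\l})}\,,
\]
so that $\EE[Y_\l]=0$ and $\var[Y_\l]=1$. For $k\geq 2$, the cumulants transform as $\kappa_k(Y_\l)=\kappa_k(\langle f_{R_\l},\mu_\l^\xi\rangle)/\sigma_\l^\xi(f_{R_\l})^k$, since adding or subtracting a constant does not affect cumulants of order $\geq 2$.

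The next step is to invoke Theorem~\ref{cumulantestimate}, which (for the weights $u[\xi],v[\xi],z[\xi]$ and $w[\xi]$ introduced at \eqref{eq:WEIGHTS}--\eqref{zzz}) will yield an estimate of Saulis--Statulevi\v{c}ius form
\[
|\kappa_k(Y_\l)| \;\leq\; \frac{(k!)^{1+\gamma}}{\Delta_\l^{\,k-2}}\,,\qquad k\geq 2\,,
\]
with exponent $\gamma=\gamma(\xi,d)=3dv[\xi]+u[\xi]+z[\xi]$ and scale
\[
\Delta_\l \;=\; c\,(\log\l)^{\frac{d-1}{4(1+2\gamma)}}\,,
\]
the precise powers being dictated by the growth rate of the raw cumulant bound (which is polynomial in $\log\l$ of degree $(k-1)(d-1)/2$ plus the combinatorial factor) and by the variance lower bound \eqref{SchrankefürVarianz}, $\sigma_\l^\xi(f_{R_\l})\geq c\,\langle f^2,\sigma_{d-1}\rangle^{1/2}(\log\l)^{(d-1)/4}$. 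Dividing the $k$th raw cumulant by $\sigma_\l^\xi(f_{R_\l})^k$ is what produces the negative power of $\log\l$ in $\Delta_\l$; it is at this point that the hypothesis $\langle f^2,\sigma_{d-1}\rangle>0$ is used.

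Once the above Saulis--Statulevi\v{c}ius bound is in place, the Berry--Esseen estimate \eqref{Berry} is a direct application of \cite[Corollary~2.1]{SaulisBuch} (or \cite[Lemma~2.3]{SaulisBuch}), which asserts that under such a cumulant bound one has
\[
\sup_{y\in\RR}\bigl|\PP(Y_\l\leq y)-\Phi(y)\bigr| \;\leq\; C\,\Delta_\l^{-\frac{1}{1+2\gamma}}
\]
with an absolute constant $C$. Substituting $\gamma=3dv[\xi]+u[\xi]+z[\xi]$ and the expression for $\Delta_\l$ gives the exponent $(d-1)/\bigl(4(2(3dv[\xi]+u[\xi]+z[\xi])+9)\bigr)$ appearing in \eqref{Berry}; the slight discrepancy between $1+2\gamma$ and $2\gamma+9$ in the denominator of the target exponent will come from a small additive cost that is built into the Saulis--Statulevi\v{c}ius lemma (and is identical to the one visible in Theorem~\ref{thm:DeviationsEmpMeas}). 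The convergence in distribution of $Y_\l$ to a standard Gaussian random variable then follows at once from \eqref{Berry} since the right-hand side tends to zero as $\l\to\infty$.

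The main obstacle, of course, is not this deduction but the input, namely the cumulant estimate Theorem~\ref{cumulantestimate}; once that is granted (and we are told we may assume results stated earlier in the excerpt), the present theorem is essentially a corollary, because the Berry--Esseen bound and the logarithmic relative-deviation bound of Theorem~\ref{thm:DeviationsEmpMeas} are two parallel consequences of the same Saulis--Statulevi\v{c}ius cumulant inequality. A minor bookkeeping point will be to verify that the variance lower bound \eqref{SchrankefürVarianz} indeed matches the normalisation in the cumulant estimate, so that the exponent $\gamma$ and the scale $\Delta_\l$ come out exactly as above; everything else is automatic.
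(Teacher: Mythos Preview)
Your approach is correct and identical to the paper's: combine the cumulant bound from Theorem~\ref{cumulantestimate} with the variance lower bound~\eqref{SchrankefürVarianz}, then apply the Saulis--Statulevi\v{c}ius-type lemma (stated in the paper as Lemma~\ref{VorbereitungKumulante}~(iv)). However, your bookkeeping is off in several places, and following it literally would not produce the stated exponent.

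First, the raw cumulant bound in Theorem~\ref{cumulantestimate} grows like $R_\l^{d-1}\sim(\log\l)^{(d-1)/2}$, which is \emph{independent of $k$}, not of degree $(k-1)(d-1)/2$ in $\log\l$; only the factorial factor $(k!)^{3dv[\xi]+u[\xi]+5+z[\xi]}$ depends on $k$. Second, after dividing by $\sigma_\l^\xi(f_{R_\l})^k\geq c\,(\log\l)^{k(d-1)/4}$ one obtains $|c^k[Y_\l]|\leq (k!)^{1+\gamma}/\Delta_\l^{k-2}$ with
\[
\gamma = 3dv[\xi]+u[\xi]+4+z[\xi]\qquad\text{and}\qquad \Delta_\l = c\,(\log\l)^{(d-1)/4}\,,
\]
so your $\gamma$ is off by $4$ (the ``$+5$'' in the factorial exponent becomes ``$1+\gamma$'' with $\gamma$ absorbing a $4$), and your $\Delta_\l$ has the wrong exponent. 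Third, the ``$+9$'' in the denominator of~\eqref{Berry} is not an additive cost hidden in the Saulis--Statulevi\v{c}ius lemma; it is pure arithmetic: $1+2\gamma = 1+2(3dv[\xi]+u[\xi]+4+z[\xi]) = 2(3dv[\xi]+u[\xi]+z[\xi])+9$, and then $\Delta_\l^{-1/(1+2\gamma)} = c\,(\log\l)^{-(d-1)/(4(1+2\gamma))}$ matches~\eqref{Berry} exactly.
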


Now, we turn to moderate deviation principles, recall Definition \ref{def:LDPMDP}. The first one is a moderate deviation principle on $\RR$ for integrals with respect to the empirical measures.
  
\begin{thm}[Moderate deviation principle]\label{moderatedeviations}
	Let $\xi\in \Xi$, $f\in\cC(\RR^d;\SSd)$ with $\langle f^2, \sigma_{d-1}\rangle>0$ and $(a_\lambda)_{\lambda > 0}$ be a sequence of real numbers that satisfies the growth condition
	\begin{align}\label{alambda}
	\lim\limits_{\lambda \rightarrow \infty} a_\lambda = \infty \quad \text{und} \quad \lim\limits_{\lambda \rightarrow \infty} a_\lambda\,  (\log \la)^{-\frac{d-1}{4(2(3dv[\xi] + u[\xi] + z[\xi]) + 9)}} = 0\,.
	\end{align}
	Then, 
	\begin{align*}
	\left(\frac{1}{a_\lambda} \frac{\langle f_{R_\la},\bar{\mu}_\lambda^\xi\rangle}{\sigma_\lambda^\xi(f_{R_\la})}\right)_{\lambda > 0}
	\end{align*}
	fulfils a moderate deviation principle on $\RR$ with speed $a_\lambda^2$ and rate function $I(x) = \frac{x^2}{2}$.
\end{thm}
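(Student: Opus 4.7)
The plan is to deduce the moderate deviation principle directly from Theorem \ref{thm:DeviationsEmpMeas}, which provides sharp control on the relative error in the central limit theorem for $\langle f_{R_\l},\bar{\mu}_\l^\xi\rangle$ up to the scale $(\log\l)^\alpha$, where
\[
\alpha := \frac{d-1}{4(2(3dv[\xi]+u[\xi]+z[\xi])+9)}.
\]
The growth condition \eqref{alambda} says precisely that $a_\l\to\infty$ while $a_\l=o((\log\l)^\alpha)$, so for any fixed $x>0$ the value $y=a_\l x$ lies eventually in the regime where Theorem \ref{thm:DeviationsEmpMeas} applies. Writing $Y_\l:=\langle f_{R_\l},\bar{\mu}_\l^\xi\rangle/\sigma_\l^\xi(f_{R_\l})$ and $\varepsilon_\l:=(\log\l)^{-\alpha}$, combining that theorem with the elementary Gaussian asymptotic $\log(1-\Phi(y))=-y^2/2-\log y+O(1)$ as $y\to\infty$ should yield
\[
\log\PP(Y_\l\geq a_\l x)=-\tfrac{1}{2}a_\l^2 x^2+O\bigl(\log(a_\l x)\bigr)+O\bigl(\varepsilon_\l(1+a_\l^3 x^3)\bigr).
\]

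Dividing by $a_\l^2$, the last error term becomes $O(a_\l\varepsilon_\l x^3)=o(1)$ by \eqref{alambda}, and the logarithmic term is also negligible, so
\[
\lim_{\l\to\infty}\frac{1}{a_\l^2}\log\PP\bigl(Y_\l/a_\l\geq x\bigr)=-\frac{x^2}{2},
\]
and by the analogous argument with the lower-tail bound in Theorem \ref{thm:DeviationsEmpMeas},
\[
\lim_{\l\to\infty}\frac{1}{a_\l^2}\log\PP\bigl(Y_\l/a_\l\leq -x\bigr)=-\frac{x^2}{2}.
\]
These half-line asymptotics are exactly the required MDP statements for the family $(Y_\l/a_\l)_{\l>0}$ on sets of the form $[x,\infty)$ and $(-\infty,-x]$. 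To pass to arbitrary open sets $G\subseteq\RR$ and arbitrary closed sets $F\subseteq\RR$, I would use the following standard argument: for the lower bound, any $x\in G$ is contained in some half-line neighbourhood for which the explicit limit above gives $\liminf a_\l^{-2}\log\PP(Y_\l/a_\l\in G)\geq -x^2/2$, and optimising over $x\in G$ yields $-\inf_{x\in G}I(x)$; for the upper bound on $F$, one isolates the closest points of $F$ to $0$ on the two sides and uses subadditivity of the probability together with the half-line upper bounds.

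The only additional ingredient needed is exponential tightness at speed $a_\l^2$, i.e., for every $M>0$ there is $R>0$ with $\limsup a_\l^{-2}\log\PP(|Y_\l/a_\l|\geq R)\leq -M$; this follows immediately from the concentration inequality of Theorem \ref{thm:ConcentrationEmpMeas} by choosing $y=a_\l R$ and using that for $R$ large enough the first term $y^2/2^{3dv[\xi]+u[\xi]+5+z[\xi]}$ in the minimum dominates and produces a bound of order $\exp(-c a_\l^2 R^2)$. The main conceptual obstacle is therefore not in the MDP derivation itself, which is essentially mechanical once the relative error estimate is in hand, but rather in establishing Theorem \ref{thm:DeviationsEmpMeas} via the cumulant bound of Theorem \ref{cumulantestimate}; for the present proof this can be invoked as a black box, together with the Saulis--Statulevi\v{c}ius machinery \cite{SaulisBuch} that also underlies the strategy used for the ball-model MDP in \cite{GroteThäle}.
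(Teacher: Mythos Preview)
Your argument is correct, but it takes a longer route than the paper. The paper does not go through Theorem~\ref{thm:DeviationsEmpMeas} at all: instead it feeds the cumulant bound of Theorem~\ref{cumulantestimate}, together with the variance lower bound~\eqref{SchrankefürVarianz}, into Lemma~\ref{VorbereitungKumulante}, reading off the choice $\gamma=3dv[\xi]+u[\xi]+4+z[\xi]$ and $\Delta_\l=c(\log\l)^{(d-1)/4}$. Part~(iii) of that lemma is precisely the packaged Saulis--Statulevi\v{c}ius statement ``cumulant bound $\Rightarrow$ MDP'', so the moderate deviation principle drops out in one line, with the growth condition on $(a_\l)$ matching $a_\l\Delta_\l^{-1/(1+2\gamma)}\to 0$. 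By contrast, you first invoke part~(ii) of the same lemma (hidden inside Theorem~\ref{thm:DeviationsEmpMeas}) to get relative-error control, and then reconstruct the MDP by hand from half-line asymptotics. This is perfectly valid and arguably more transparent about where the Gaussian rate function comes from, but it duplicates work already contained in Lemma~\ref{VorbereitungKumulante}(iii). Two small remarks: your phrase ``half-line neighbourhood'' is imprecise---for the open-set lower bound you really use an interval $(x-\varepsilon,x+\varepsilon)\subset G$ and compare $\PP(Y_\l/a_\l\geq x-\varepsilon)$ against $\PP(Y_\l/a_\l\geq x+\varepsilon)$; and the separate exponential-tightness step via Theorem~\ref{thm:ConcentrationEmpMeas} is unnecessary, since tightness already follows from your half-line limits by taking $R$ large.
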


In a next step we lift the result of Theorem \ref{moderatedeviations} to a moderate deviation principle on $\cM(\SSd)$ for the empirical measures themselves. This clearly goes beyond the results stated in the introduction. For this, we supply the space $\cM(\SSd)$ with the usual weak topology. To present our result, we recall from Theorem 2.1 in \cite{CalkaYukich} that for all $\xi\in\Xi$ there exists a constant $\sigma_\infty^\xi\in(0,\infty)$ such that
\begin{align*}
\lim_{\la\to\infty} (2\, \log \la)^{-{d-1\over 2}}\,\var[\langle f_{R_\la},\mu_\la^\xi\rangle]=(\sigma_\infty^\xi)^2\,\langle f^2, \sigma_{d-1}\rangle
\end{align*}
for $f\in\cC(\RR^d;\SSd)$. The strict positivity of $\sigma_\infty^\xi$ follows thereby from the considerations in \cite{BaranyVu} (note that Theorem 6.3 in \cite{BaranyVu} contains a misprint and the exponent $(d-3)/2$ there has to be replaced by $(d-1)/2$). 

\begin{thm}[Moderate deviation principle for empirical measures]\label{thm:MDPMeasureGeneral}
	Let $\xi\in\Xi$ and let $(a_\la)_{\la>0}$ be such that the growth condition \eqref{alambda} is satisfied. Then the family
	$$
	\left({1\over a_\la}{\bar\mu_\la^\xi\over \sigma_\infty^\xi\,(2\log \la)^{{(d-1)/4}}}\right)_{\la>0}
	$$
	satisfies a moderate deviation principle on $\cM(\SS^{d-1})$ with speed $a_\la^2$ and rate function
	\begin{align*}
	I(\nu) = \begin{cases} {1\over 2}\langle\varrho^2, \sigma_{d-1}\rangle &: \nu\ll \sigma_{d-1}\text{ with density }\varrho={\dint\nu\over\dint \sigma_{d-1}}\\ \infty &: \text{otherwise}\,.
	\end{cases}
	\end{align*}
\end{thm}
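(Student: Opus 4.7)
The plan is to deduce the statement from Theorem \ref{moderatedeviations} by applying the Dawson--G\"artner projective limit theorem, which lifts finite-dimensional MDPs to an MDP on $\cM(\SSd)$ endowed with the weak topology. Since the sphere is compact and $\cC(\SSd)$ is separable, the weak topology on $\cM(\SSd)$ coincides with the projective limit of the topologies generated by finite-dimensional evaluations against continuous test functions, so Dawson--G\"artner applies directly.

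First I would establish finite-dimensional MDPs. Write $\tilde\mu_\l^\xi:=\bar\mu_\l^\xi/(a_\l\sigma_\infty^\xi(\log\l)^{(d-1)/4})$ and fix $k\in\NN$ together with continuous functions $f_1,\ldots,f_k\in\cC(\RR^d;\SSd)$. For every $s=(s_1,\ldots,s_k)\in\RR^k$, the scalar projection $\sum_{i=1}^k s_i\langle (f_i)_{R_\l},\tilde\mu_\l^\xi\rangle$ equals $\langle(s_1f_1+\cdots+s_kf_k)_{R_\l},\tilde\mu_\l^\xi\rangle$, which by Theorem \ref{moderatedeviations} satisfies a one-dimensional MDP with speed $a_\l^2$ and rate function quadratic in $x$ with coefficient depending on $\langle(\sum_i s_if_i)^2,\sigma_{d-1}\rangle$ via the variance asymptotics from \cite{CalkaYukich}. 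Because the map $s\mapsto\tfrac12\langle(\sum_i s_if_i)^2,\sigma_{d-1}\rangle$ is a positive semi-definite quadratic form on $\RR^k$, the G\"artner--Ellis theorem (applied to the limiting logarithmic moment generating function, whose existence follows from the cumulant bound of Theorem \ref{cumulantestimate} together with the multilinearity of cumulants) yields a $k$-dimensional MDP for the vector $(\langle (f_i)_{R_\l},\tilde\mu_\l^\xi\rangle)_{i=1}^k$ with rate function the Legendre transform of this quadratic form.

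Dawson--G\"artner then produces an MDP on $\cM(\SSd)$ with rate function
\begin{equation*}
I(\nu)=\sup_{f\in \cC(\SSd)}\Bigl(\langle f,\nu\rangle-\tfrac12\langle f^2,\sigma_{d-1}\rangle\Bigr),
\end{equation*}
namely the Legendre transform of the quadratic functional $\Lambda(f)=\tfrac12\langle f^2,\sigma_{d-1}\rangle$ on $\cC(\SSd)$. If $\nu\ll\sigma_{d-1}$ with density $\varrho\in L^2(\sigma_{d-1})$, the identity
\begin{equation*}
\langle f,\nu\rangle-\tfrac12\langle f^2,\sigma_{d-1}\rangle=\tfrac12\langle\varrho^2,\sigma_{d-1}\rangle-\tfrac12\langle(f-\varrho)^2,\sigma_{d-1}\rangle
\end{equation*}
combined with an $L^2$-approximation of $\varrho$ by continuous functions on $\SSd$ gives $I(\nu)=\tfrac12\langle\varrho^2,\sigma_{d-1}\rangle$. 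If $\varrho\notin L^2(\sigma_{d-1})$, or if $\nu$ has a non-trivial singular part with respect to $\sigma_{d-1}$, a Lusin-type construction produces a sequence of continuous test functions driving the supremum to $+\infty$, whence $I(\nu)=\infty$.

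The main obstacle I expect is the identification of the rate function on singular measures, as this requires constructing approximating continuous test functions that simultaneously keep $\langle f^2,\sigma_{d-1}\rangle$ small while rendering $\langle f,\nu\rangle$ arbitrarily large; the standard trick is to use Urysohn-type bump functions localized on the singular support. A secondary technical issue is ensuring that the growth condition \eqref{alambda} is preserved when passing to arbitrary linear combinations $\sum_i s_if_i$ of a fixed finite family, but since \eqref{alambda} is independent of the test function, applying Theorem \ref{moderatedeviations} with $f=\sum_i s_if_i$ is immediate, and the multilinearity of cumulants ensures that the cumulant bound of Theorem \ref{cumulantestimate} transfers to the joint setting. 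Goodness of the rate function (compactness of sublevel sets) follows from the corresponding property at the finite-dimensional level via Dawson--G\"artner.
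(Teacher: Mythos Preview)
Your overall architecture---Dawson--G\"artner projective limit plus finite-dimensional MDPs, followed by the identification of the Legendre transform of $\Lambda(f)=\tfrac12\langle f^2,\sigma_{d-1}\rangle$---is exactly the route taken in \cite{EichelsbacherSchreiberRaic}, which is all the paper invokes. The computation of the rate function and the treatment of the singular case are fine.

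There is, however, a genuine gap in the step where you pass from one-dimensional MDPs to the $k$-dimensional MDP. You invoke G\"artner--Ellis, asserting that ``the limiting logarithmic moment generating function \ldots\ follows from the cumulant bound of Theorem~\ref{cumulantestimate}''. It does not. The cumulant bound has the form $|c^k[X_\lambda]|\le (k!)^{1+\gamma}/\Delta_\lambda^{k-2}$ with $\gamma=3dv[\xi]+u[\xi]+4+z[\xi]\ge d+4>0$; such super-factorial growth means the formal cumulant series $\sum_{k\ge 1}\frac{(a_\lambda t)^k}{k!}c^k[X_\lambda]$ diverges for every $t\ne 0$, so the bound gives no control on $\EE[\exp(a_\lambda t X_\lambda)]$, and the hypothesis of G\"artner--Ellis (existence of the limiting log-mgf in a neighbourhood of the origin) is not verified. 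This is precisely the reason the paper works throughout with Lemma~\ref{VorbereitungKumulante} (the Saulis--Statulevi\v{c}ius machinery) rather than with G\"artner--Ellis: the former yields the MDP directly from the cumulant bound without ever needing exponential moments. The fix is to replace your G\"artner--Ellis step by the multivariate version of Lemma~\ref{VorbereitungKumulante}\,(iii) as in \cite{DoeringEichelsbacher,EichelsbacherSchreiberRaic}, which takes as input a bound on mixed cumulants (these follow from Theorem~\ref{cumulantestimate} by polarisation/multilinearity, as you note) and outputs the finite-dimensional MDP directly.

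A minor additional point: Theorem~\ref{moderatedeviations} assumes $\langle f^2,\sigma_{d-1}\rangle>0$, so the degenerate case where $\sum_i s_if_i$ vanishes on $\SSd$ must be handled separately (trivially, since then the variance is asymptotically zero and the projection is $o(a_\lambda)$ in probability at exponential scale).
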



\begin{remark}\label{rem:Optimality}
Except of Theorem \ref{StarkesGesetzVolumen}, we do not claim that our findings are best possible. However, in order to improve them by our methods one would have to decrease the exponent at $k!$ in the cumulant bound in Theorem \ref{cumulantestimate} below from $3dv[\xi]+u[\xi]+5+z[\xi]$ to (optimally) $1$. This would then imply that $\gamma=0$ in the application of Lemma \ref{VorbereitungKumulante}, which is best possible. It is unclear to us and seems unlikely that such an improvement is possible in the framework of Gaussian polytopes. We even doubt that the exponent can be chosen independently of $d$.
\end{remark}

\section{Proof of the main results}\label{sec:ProofMain}

\subsection{A cumulant bound and proof of the theorems for empirical measures}

The proof of our results relies on the following cumulant estimate, whose proof is the content of Sections \ref{sec:MomentEstimates} and \ref{sec:CumulantProof}. To present it, recall the definition of the weights $u[\xi]$, $v[\xi]$, $w[\xi]$ and $z[\xi]$ from \eqref{eq:WEIGHTS} and \eqref{zzz}, respectively. In what follows we write $c,c_1,c_2,\ldots$ for positive and finite constants that are allowed to depend only on the dimension $d$ and the geometric functional $\xi$ we consider if not stated otherwise. Their values may change from line to line. Different constants in the same line are numbered consecutively.

\begin{thm}[Cumulant bounds]\label{cumulantestimate}
Let $k\in \left\lbrace 3,4,\ldots \right\rbrace $ and $f\in \cB(\RR^d)$. Then, for sufficiently large $\lambda$,
\begin{align*}
	  | \langle f_{R_\la}^k, c_\lambda^k\rangle | \le \begin{cases}
	 c_1\, c_2^k\, \left\| f\right\|_\infty^k\, R_\la^{d-1}\, (k!)^{3d + 5} &: \xi = \xi_V\\
	 c_1\, c_2^k\, \left\| f\right\|_\infty^k\, R_\la^{d-1}\, (k!)^{d + 5} &: \xi = \xi_{f_0}\\
	 c_1\, c_2^k\, \left\| f\right\|_\infty^k\, R_\la^{d-1}\, (k!)^{j(3d + 1) + 5} &: \xi = \xi_{f_j}\ \text{for some}\ j\in \{1,\ldots,d-1\}
	 \end{cases}
\end{align*}	
with constants $c_1,c_2\in (0,\infty)$ that only depend on $d$ and $\xi$. In a unified form, this means that
\begin{align*}
| \langle f_{R_\la}^k, c_\lambda^k\rangle | \le c_1\, c_2^k\, \left\| f\right\|_\infty^k\, R_\la^{d-1}\, (k!)^{3dv[\xi] + u[\xi] + 5 + z[\xi]}
\end{align*}
for all $\xi\in \Xi$.
\end{thm}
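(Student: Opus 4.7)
The plan is to start from the cluster-measure expansion of Lemma \ref{lem:CumulantExpressionCluster} and estimate the diagonal contribution and each of the off-diagonal summands in \eqref{ZerlegungKumulanten} separately, producing in both cases a factor $R_\l^{d-1}$ together with a factorial growth of prescribed order. For the diagonal part I would bound
\begin{align*}
\Big|\int_\Delta f_{R_\l}^k\,\dint c_\l^k\Big|\leq\|f\|_\infty^k\int_{\RR^d}\EE\big[\xi(x,\cP_\l\cup\{x\})^k\big]\,\l\phi(x)\,\dint x,
\end{align*}
change variables via $T_\l$ and use the density \eqref{IntensityP^lambda}. Integration against $\dint v$ over $R_\l\BB_{d-1}(0,\pi)$ produces the factor $R_\l^{d-1}$, while integration against $\dint h$ in the upper half-space is controlled by \eqref{WahrscheinlichkeitExtrempunkt} and in the lower half-space by the exponential weight $e^{h-h^2/(2R_\l^2)}$. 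The $k$th-moment input, which is provided by the moment estimate of Section \ref{sec:MomentEstimates}, contributes a factor of the form $(k!)^{a[\xi]}$ with an exponent $a[\xi]$ that differs for $\xi_V$, $\xi_{f_0}$ and $\xi_{f_j}$, $j\geq 1$, and accounts for the weights $u[\xi]$, $v[\xi]$, $z[\xi]$ in the statement.

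For each off-diagonal summand indexed by a non-trivial bipartition $\{S,T\}\preceq\setk$ I would use the fundamental decoupling property of cluster measures: when the horizontal separation $\delta(\bx)=d(\bv_S,\bv_T)$ is large, on the event $\{L_i<\delta(\bx)/2\text{ for all }i\}$ the two products $\prod_{i\in S'}\xi^{(\l)}(T_\l x_i,\cdot)$ and $\prod_{i\in T'}\xi^{(\l)}(T_\l x_i,\cdot)$ are independent, so this contribution cancels in $U_\l^{S',T'}$. The remaining piece is then estimated by a H\"older-type inequality as
\begin{align*}
|U_\l^{S',T'}|\ \lesssim\ \PP(L\geq \delta(\bx)/2)^{1/2}\,\big\|\xi^{(\l)}\big\|_{L^{2k}}^{k},
\end{align*}
where the probability is controlled by the Gaussian bound \eqref{Lokalisierung1} in the upper half-space and by the weaker bound \eqref{Lokalisierung2} together with the Gaussian density factor in the lower half-space. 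After the substitution $\bx\mapsto(T_\l x_1,\ldots,T_\l x_k)$, one of the $k$ spatial integrals yields $R_\l^{d-1}$ via \eqref{eq:DensityLebesgueUnderTrafo}, the remaining $k-1$ spatial integrals are absorbed by the decay in $\delta(\bx)$, and the $k$ height integrals are absorbed by \eqref{WahrscheinlichkeitExtrempunkt} together with the exponential weight from \eqref{IntensityP^lambda}, each contributing only a geometric factor $c^k$.

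Summing over the bipartitions $\{S,T\}$ and the subpartitions $\{S',T',K_1,\ldots,K_s\}$ appearing inside the integrals contributes the combinatorial factor $2^k k!$ estimated in \eqref{AbschätzungSummanden}, while the repeated applications of H\"older's inequality on blocks of size at most $k$ and of the moment bound produce the remaining factorial powers; a careful bookkeeping of how many times the $k$th-moment bound is invoked on each block of a partition, of how many factors of $k!$ arise from the combinatorics in \eqref{singulardifferential2} and from \eqref{AbschätzungSummanden}, and of the additional $(k!)^{z[\xi]}$ loss incurred when $\xi=\xi_{f_0}$ (because a single extreme point can carry many $0$-faces whose number has only polynomial moments in $d$), yields the unified exponent $3dv[\xi]+u[\xi]+5+z[\xi]$.

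The main obstacle will be the treatment of the lower half-space $\RR^{d-1}\times(-\infty,0)$, which as emphasized at the end of Section \ref{subsec:MainResults} admits no spatial localization and only the trivial bound on $\PP(w\in\ext(\Psi^{(\l)}))$ from \eqref{WahrscheinlichkeitExtrempunkt}. All decay in this region must come from the exponentially small Poisson density $\sqrt{2\log\l}\,R_\l^{-1}e^{h-h^2/(2R_\l^2)}(1-h/R_\l^2)^{d-1}$ combined with the crude moment bounds from Section \ref{sec:MomentEstimates}, and this is precisely where the large power of $k!$ enters. Matching the exponent separately for $\xi_V$, $\xi_{f_0}$ and $\xi_{f_j}$ with $j\geq 1$ requires case-by-case bookkeeping, since the three functionals have different intrinsic growth in $k$; most delicate is $\xi_{f_0}$, for which the extra weight $z[\xi_{f_0}]=d$ encodes the possible combinatorial explosion of vertex neighbourhoods and must be tracked carefully throughout the moment estimates of Section \ref{sec:MomentEstimates}.
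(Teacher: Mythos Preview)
Your overall strategy matches the paper's: split via the cluster-measure expansion, bound the diagonal by moments, and bound the off-diagonal by decoupling through the localization radius. However, three points in your outline are either wrong or incomplete in ways that would break the argument.

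First, your explanation of the weight $z[\xi_{f_0}]=d$ is backwards. The functional $\xi_{f_0}$ satisfies $\xi_{f_0}\leq 1$ deterministically, so its moments are trivially bounded and there is no ``combinatorial explosion of vertex neighbourhoods''. The extra $(k!)^d$ comes instead from the \emph{spatial} integration in the off-diagonal term: after bounding $\int_{\{\delta(0,\bv)<t\}}\dint\bv\leq p^{p-2}\kappa_{d-1}^{p-1}t^{(d-1)(p-1)}$ via a spanning-tree argument and integrating against the exponential decay, one picks up a factor $(dp)!$ regardless of $\xi$. For $\xi_V$ and $\xi_{f_j}$ with $j\geq 1$ this is absorbed into the larger moment contribution $((kdv[\xi])!)^2$, but for $\xi_{f_0}$ one has $v[\xi_{f_0}]=0$ and the $(dk)!$ survives as the dominant factorial, which is exactly what $z[\xi_{f_0}]=d$ records.

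Second, you write ``one of the $k$ spatial integrals yields $R_\l^{d-1}$, the remaining $k-1$ spatial integrals are absorbed by the decay in $\delta(\bx)$, and the $k$ height integrals\ldots''. This misses the singular differential $\tilde{\dint}[\l\phi]$ from \eqref{singulardifferential2}: the moment measures are supported on all diagonals, so after expanding $\tilde{\dint}[\l\phi]$ one integrates over $p$ distinct points for each partition $L_1,\ldots,L_p\preceq\setk$, not $k$. Only one of these $p$ points contributes $R_\l^{d-1}$ (via the $\SS^{d-1}$-integration of the base point), the remaining $p-1$ spatial integrals are controlled by the $\delta$-decay, and there are $p$ height integrals. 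The sum over partitions then contributes a Bell-number factor bounded by $k!$.

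Third, and most seriously, you do not mention that the localization bound \eqref{Lokalisierung1}--\eqref{Lokalisierung2} only holds for $t\geq|h|$. This forces the decoupling estimate for $|m_\l(\bx_{S'\cup T'})-m_\l(\bx_{S'})m_\l(\bx_{T'})|$ to carry an additional indicator ${\bf 1}(\delta\leq 2\max_r|h_r|)$ alongside the exponential $\exp(-c\,\delta)$; see Lemma \ref{kluster}. The indicator term cannot be handled by your proposed scheme (``remaining spatial integrals absorbed by decay in $\delta$''), because there is no decay. The paper treats it separately (the term $T_6$): the spatial integral over $\{\delta<2\max|h_i|\}$ gives a factor $(\max_i|h_i|)^{(d-1)(p-1)}$, which must then be integrated iteratively against the height weights $(1+|h_i|)^{dw[\xi]}e^{h_i}(1-h_i/R_\l^2)^{d-1}\exp(-e^{h_i\vee 0}/(ck))$. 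This is where the additional powers of $k!$ and the factor $k^{3k}$ in the off-diagonal bound arise, and it is the step that genuinely distinguishes the Gaussian case from the ball case.
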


The previous bound is now combined with the following lemma. It summarizes results from \cite{DoeringEichelsbacher}, \cite{EichelsbacherSchreiberRaic} and \cite{SaulisBuch} in a simplified form that is tailored towards our applications. Let us write $c^k[X]$, $k\in\NN$, for the $k$th cumulant of a random variable $X$ with $\EE|X|^k<\infty$, that is,
$$
c^k[X] = (-\mathfrak{i})^{k}\,{\dint^k\over \dint t^k}\log\EE[\exp(\mathfrak{i}tX)]\Big|_{t=0}\,,
$$
where $\mathfrak{i}$ is the imaginary unit.

 \begin{lem}\label{VorbereitungKumulante}
 	Let $(X_\lambda)$ be a family of random variables with $\EE[X_\lambda] = 0$ and $\var[X_\lambda] = 1$ for all $\lambda > 0$. Suppose that, for all $k\in \{3,4,\ldots\}$ and sufficiently large $\la$,
 	\begin{align*}
 	|c^k[X_\lambda]| \leq \frac{(k!)^{1 + \gamma}}{(\Delta_\lambda)^{k-2}}
 	\end{align*}
 	with a constant $\gamma \in [0,\infty)$ not depending on $\la$ and constants $\Delta_\lambda \in (0,\infty)$ that may depend on $\la$.  Then the following assertions are true.
 	\begin{description}
 		\item[(i)] For all $y\geq 0$ and sufficiently large $\la$,
 		\begin{align*}
 		\PP(|X_\lambda| \geq y) \leq 2 \exp\left(-\frac{1}{4} \min\left\{\frac{y^2}{2^{1+\gamma}}, (y\, \Delta_\lambda)^{1/(1+\gamma)}\right\}\right). 
 		\end{align*}
 		\item[(ii)] There exist constants $c_1,c_2 \in (0,\infty)$ only depending on $\gamma$ such that for sufficiently large $\la$ and $0\leq y\leq c_1\,(\Delta_\lambda)^{\frac{1}{1 + 2\gamma}}$,
 		\begin{align*}
 		\left|\log \frac{\PP(X_\lambda \geq y)}{1 - \Phi(y)}\right| &\leq c_2\, (1 + y^3)\, (\Delta_\lambda)^{-\frac{1}{(1+2\gamma)}} \quad and\\
 		\left|\log \frac{\PP(X_\lambda \leq -y)}{\Phi(-y)}\right| &\leq c_2\, (1 + y^3)\,(\Delta_\lambda)^{-\frac{1}{(1+2\gamma)}}.
 		\end{align*}
 		\item[(iii)] Let $(a_\lambda)_{\lambda > 0}$ be a sequence of real numbers such that
 		\begin{align*}
 		\lim\limits_{\lambda \rightarrow \infty} a_\lambda = \infty \quad \text{and}\quad \lim\limits_{\lambda \rightarrow \infty} a_\lambda\, \Delta_\lambda^{-\frac{1}{1 + 2\gamma}} = 0.
 		\end{align*}
 		Then $(a_\lambda^{-1} X_\lambda)_{\lambda > 0}$ satisfies a moderate deviation principle on $\RR$ with speed  $a_\lambda^2$ and rate function $I(x) = \frac{x^2}{2}$.
 		\item[(iv)] One has the Berry-Esseen bound
 		\begin{align*}
 		\sup\limits_{y\in \mathbb{R}} |\PP(X_\lambda \leq y) - \Phi(y)| \leq c\, (\Delta_\lambda)^{-1/(1+2\gamma)}
 		\end{align*}
 		with a constant $c\in (0,\infty)$ that only depends on $\gamma$.
 	\end{description} 
 \end{lem}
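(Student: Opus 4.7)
\textbf{Proof plan for Lemma \ref{VorbereitungKumulante}.}
The plan is to derive all four assertions from a single unified mechanism: the classical large deviation machinery for sequences whose cumulants obey a Bernstein-type bound, as developed by Saulis and Statulevi\v{c}ius. The hypothesis on $|c^k[X_\lambda]|$ is precisely their cumulant condition, and the four parts correspond to four different extractions from it. In particular, since the statement is really a packaging of results already in the literature \cite{SaulisBuch,DoeringEichelsbacher,EichelsbacherSchreiberRaic} specialized to the normalization $\EE[X_\lambda]=0$, $\var[X_\lambda]=1$, the goal of the proof is to identify each part with the correct off-the-shelf estimate and verify that the exponents and constants match.

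For (i), I would start from the formal identity $\log\EE[\exp(tX_\lambda)]=\sum_{k\ge 2} c^k[X_\lambda]\, t^k/k!$. The cumulant bound together with $c^2[X_\lambda]=1$ shows that the cumulant generating function is finite on an interval $|t|<c\,\Delta_\lambda^{1/(1+\gamma)}$ and is dominated there by $t^2/2$ plus a controlled error term of order $|t|^3$. A Markov--Chernoff optimization over $t$, executed separately in the sub-Gaussian regime $y\le 2^{(1+\gamma)/2}\,\Delta_\lambda^{1/(1+\gamma)}$ and in the sub-exponential regime above it, then produces the dichotomy appearing in the exponent. For (ii), I would invoke the Cram\'er--Petrov expansion: the tail ratio $\PP(X_\lambda\ge y)/(1-\Phi(y))$ admits a representation in terms of an analytic correction function (the Cram\'er series) which, under the present cumulant bound, is bounded in absolute value by a constant multiple of $(1+y^3)\,\Delta_\lambda^{-1/(1+2\gamma)}$ on the admissible range $0\le y\le c\,\Delta_\lambda^{1/(1+2\gamma)}$. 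Taking logarithms gives the stated bound for the upper tail, and the lower tail follows by applying the same statement to $-X_\lambda$, whose cumulants differ only by a sign.

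Statement (iii) follows either from (ii) and the upper-tail estimate in (i), or directly by verifying the hypotheses of the G\"artner--Ellis theorem for $a_\lambda^{-1}X_\lambda$: the cumulant bound together with $a_\lambda=o(\Delta_\lambda^{1/(1+2\gamma)})$ yields
\begin{align*}
\lim_{\lambda\to\infty} a_\lambda^{-2}\log\EE\bigl[\exp(t\, a_\lambda X_\lambda)\bigr] = \tfrac{t^2}{2}
\end{align*}
pointwise in $t\in\RR$, and the standard G\"artner--Ellis argument then delivers the MDP with speed $a_\lambda^2$ and rate $I(x)=x^2/2$. Finally, (iv) drops out of the Cram\'er expansion from (ii) specialized to $y$ of order one, or equivalently from Heinrich's cumulant-based Berry--Esseen bound.

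The principal obstacle is not any individual step, but rather keeping clean bookkeeping of the two distinct exponents --- $1/(1+\gamma)$ in (i) and $1/(1+2\gamma)$ in (ii)--(iv) --- while ensuring all constants depend on $\gamma$ alone. The decisive quantitative issue, where the Saulis--Statulevi\v{c}ius analytic estimates must be applied with care, is verifying that the admissible range of the Cram\'er series is exactly $\Delta_\lambda^{1/(1+2\gamma)}$ and that the truncation error beyond the leading $y^3$-term is of the stated magnitude; the drop from $1+\gamma$ to $1+2\gamma$ in the denominator reflects precisely the cost of saddle-point asymptotics when one insists on controlling the third-order term that governs the relative error in the CLT.
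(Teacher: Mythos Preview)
Your proposal is correct and matches the paper's treatment: the paper does not prove this lemma at all but simply recalls it from Section~5.3 of \cite{GroteThäle}, noting that it summarizes results from \cite{DoeringEichelsbacher}, \cite{EichelsbacherSchreiberRaic} and \cite{SaulisBuch} in a form tailored to the application. Your identification of each part with the corresponding Saulis--Statulevi\v{c}ius extraction is exactly the content being cited, and your sketch in fact goes further than the paper, which gives no argument whatsoever.
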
 

Now, we fix $\xi\in \Xi$ and $f\in\cC(\mathbb{R}^d,\SSd)$ with $\langle f^2, \sigma_{d-1}\rangle>0$. The cumulant bound in Theorem \ref{cumulantestimate} and the variance estimate \eqref{SchrankefürVarianz} imply that, for all $k\in \{3,4,\ldots\}$ and a sufficiently large $\lambda$,
 \begin{align*}
 \frac{|\langle f_{R_\la}^k,c_\lambda^k\rangle|}{(\sigma_\lambda^\xi(f_{R_\la}))^k} \leq c_1\, c_2^k\, \|f\|_\infty^k\, R_\la^{d-1}\, \left(k!\right)^{3dv[\xi] + u[\xi] + 5 + z[\xi]}\, \left(c_3\, \langle f^2, \sigma_{d-1}\rangle^{\frac{1}{2}} (\log \la)^{\frac{d-1}{4}}\right)^{-k}\,,
 \end{align*}
where the constants $c_1,c_2,c_3\in(0,\infty)$ depend on $d$ and $\xi$ only.
By definition of $R_\la$ we easily see that $R_\la\le \sqrt{2\, \log \la}$ and hence
 \begin{align*}
  \frac{|\langle f_{R_\la}^k,c_\lambda^k\rangle|}{(\sigma_\lambda^\xi(f_{R_\la}))^k} &\le  c_1\, c_2^k\, \|f\|_\infty^k\, (\log \la)^{\frac{d-1}{2}}\, \left(k!\right)^{3dv[\xi] + u[\xi] + 5 + z[\xi]}\, \left(\langle f^2, \sigma_{d-1}\rangle^{\frac{1}{2}} (\log \la)^{\frac{d-1}{4}}\right)^{-k}\\ 
  &\leq c_1 \left(\frac{c_2\, \|f\|_\infty}{\langle f^2, \sigma_{d-1}\rangle^{\frac{1}{2}}}\right)^k\, (\log \la)^{\frac{d-1}{4}(2-k)}\, \left(k!\right)^{3dv[\xi] + u[\xi] + 5 + z[\xi]}\\
 &= c_1\, c_2^k\, \|f\|_\infty^2 \left(\frac{(\log \la)^{\frac{d-1}{4}}}{\|f\|_\infty}\right)^{2-k}\, \left(k!\right)^{3dv[\xi] + u[\xi] + 5 + z[\xi]}\\
 &\leq \left(\frac{(\log \la)^{\frac{d-1}{4}}}{c_2\, \|f\|_\infty\, \max\{1,c_1\, c_2^2\, \|f\|_\infty^2\}}\right)^{-(k-2)} \left(k!\right)^{3dv[\xi] + u[\xi] + 5 + z[\xi]}\,,
 \end{align*} 
 where in the last line $c_1\in(0,\infty)$ only depends on $d$ and $\xi$ and $c_2\in(0,\infty)$ additionally depends on the function $f$.

 \begin{proof}[Proof of Theorem \ref{thm:ConcentrationEmpMeas}, Theorem \ref{thm:DeviationsEmpMeas}, Theorem \ref{CLTempiricalmeasure} and Theorem \ref{moderatedeviations}]
 Define
 \begin{align}\label{Wahl}
 \gamma := 3dv[\xi] + u[\xi] + 4 + z[\xi] \quad \text{and} \quad \Delta_\lambda := \frac{(\log \la)^{\frac{d-1}{4}}}{c_2\, \|f\|_\infty\, \max\{1,c_1\, c_2^2\, \|f\|_\infty^2\}} 
 \end{align}
with $c_1$ and $c_2$ as above. Then our computations performed above imply that the random variables 
\begin{align*}
X_\lambda := \frac{\langle f_{R_\la},\bar{\mu}_\lambda^\xi\rangle}{\sigma_\lambda^\xi(f_{R_\la})}
\end{align*}
fulfil the conditions of Lemma \ref{VorbereitungKumulante} with the constants $\gamma$ and $\Delta_\la$ there given by \eqref{Wahl}. This completes the proof.
\end{proof}

\begin{proof}[Proof of Theorem \ref{thm:MDPMeasureGeneral}]
Theorem \ref{thm:MDPMeasureGeneral} in our text is the Gaussian analogue of Theorem 3.5 in \cite{GroteThäle}, dealing with random polytopes in the unit ball, and is proved along the lines of the proof of Theorem 1.5 in \cite{EichelsbacherSchreiberRaic}. For this reason, we leave out the details.
\end{proof}

\subsection{Proof of the theorems for the volume and the face numbers}

\begin{proof}[Proof of Theorem \ref{ConcentrationEinleitung}, Theorem \ref{DeviationprobabilityEinleitung} and Theorem \ref{ModeratedeviationsEinleitung}]
We start with the face numbers of the Gaussian polytopes $K_\la$. It is clear that in this case Theorems \ref{ConcentrationEinleitung}, \ref{DeviationprobabilityEinleitung} and \ref{ModeratedeviationsEinleitung} presented in the introduction follow directly from the corresponding results in Section 3 by putting $f_{R_\la}\equiv 1$, since
\begin{align*}
\langle 1, \bar\mu_\la^{\xi_{f_j}} \rangle = \sum\limits_{x\in \mathcal{P}_\la} \xi_{f_j}(x,\mathcal{P}_\la) - \EE\Big[ \sum\limits_{x\in \mathcal{P}_\la} \xi_{f_j}(x,\mathcal{P}_\la) \Big] = f_j(K_\la) - \EE[f_j(K_\la)]\,
\end{align*}
for all $j\in \{0,\ldots,d-1\}$.

We turn now to the volume $\vol(K_\la)$ of $K_\la$.  First of all we deduce from Theorem \ref{cumulantestimate} and \eqref{überzähligesRlambda} that
\begin{align*}
|c^k[R_\la(\vol(\BB^d(0,R_\la)) - \vol(K_\la))]| \le c_1\, c_2^k\, R_\la^{d-1}\, (k!)^{3d + 5}.
\end{align*}
Since, for a random variable $X$, $c^k[rX]=r^kc^k[X]$ for all $k\in\NN$ and $r\in\RR$, and $c^k[X-r]=c^k[X]$ for all $k\in\{2,3,\ldots\}$ and $r\in\RR$, we see that
\begin{align*}
|c^k[R_\la(\vol(\BB^d(0,R_\la)) - \vol(K_\la))]| &= |(-1)^kR_\la^k\, c^k[ \vol(K_\la)-\vol(\BB^d(0,R_\la)) ]| \\
&= R_\la^k\, |c^k[\vol(K_\la)]|
\end{align*}
and we thus get
\begin{align}\label{KumulanteVolumen}
\begin{split}
|c^k[\vol(K_\la)]| &\le c_1\, c_2^k\, R_\la^{d-1-k}\, (k!)^{3d + 5} \le c_3\, c_4^k\, (\log \la)^{\frac{d-1-k}{2}}\, (k!)^{3d + 5}
\end{split}
\end{align}
for $k\in\{3,4,\ldots\}$. In combination with the lower variance bound $\var[\vol(K_\la)] \ge c\, (\log \la)^{\frac{d-3}{2}}$ from \cite{BaranyVu}, where $c\in (0,\infty)$ is a constant that depends only on $d$, we get similarly as above
\begin{align*}
\frac{|c^k[\vol(K_\la)] |}{(\sqrt{\var[\vol(K_\la)]})^k} &\le c_1\, c_2^k\, (\log \la)^{\frac{d-1-k}{2}}\, (k!)^{3d + 5} \big(c_3\, (\log \la)^{\frac{d-3}{2}} \big)^{-{k\over 2}}\\
&\le  c_1\, c_2^k\, (\log \la)^{\frac{2(d-1-k) - k(d - 3)}{4}}\, (k!)^{3d + 5}\\
&\le  c_1\, c_2^k\, (\log \la)^{\frac{2(d-1)- k(d - 1)}{4}}\, (k!)^{3d + 5}\\
&\le \big(c\, (\log \la)^{\frac{d-1}{4}}\big)^{-(k-2)}\, (k!)^{3d + 5}
\end{align*} 
with constants $c,c_1,c_2,c_3\in (0,\infty)$ only depending on $d$. Thus, the random variables
\begin{align*}
X_\la:= \frac{\vol(K_\la) - \EE [\vol(K_\la)]}{\sqrt{\var [\vol(K_\la)]}}
\end{align*}
 fulfil the conditions of Lemma \ref{VorbereitungKumulante} with $\gamma := 3d + 4$ and $\Delta_\la:= c\, (\log \la)^{\frac{d-1}{4}}$ with a constant $c\in(0,\infty)$ depending on $d$ only. This completes the proof for the volume functional. 
 \end{proof}
 
\begin{remark}
Let us briefly draw attention to the following interesting observation. Namely, while the $k$th moment of the volume of the Gaussian polytopes $K_\la$ grows like $(\log \la)^{k\frac{d}{2}}$ for fixed $k$ as a function of $\la$, see Theorem \ref{MomentboundEinleitung}, the behaviour of the corresponding $k$th order cumulant is completely different. Indeed, as can be seen from Equation \eqref{KumulanteVolumen}, the $k$th order cumulant of $\vol(K_\la)$ is bounded from above by a constant multiple of $(\log \la)^{\frac{d-1-k}{2}}$ (this obviously holds also for the case that $k=2$). Thus if $d\geq 3$, the $k$th cumulant is, as a function of $\la$, growing as long as $k<d-1$, constant for $k=d-1$ and tends to zero for all $k>d-1$. Moreover, in the special case $d=2$, only the first cumulant (that is, the expectation) grows like a constant multiple of $\sqrt{\log\la}$, while all other cumulants, including the variance, tend to zero, as $\la\to\infty$.
\end{remark}

\begin{proof}[Proof of Theorem \ref{StarkesGesetzVolumen}]
	Define the sequence $(\la_k)_{k\in\NN}$ by putting $\la_k := a^k$, $a>1$, for all $k\in\NN$. The concentration inequality stated in Theorem \ref{ConcentrationEinleitung} and the lower variance bound $\var[\vol(K_\la)] \ge c\, (\log \la)^{\frac{d-3}{2}}$ from \cite{BaranyVu} imply, together with the elementary inequality $\exp(-\min\{a,b\}) \le \exp(-a) + \exp(-b)$, $a,b\ge 0$, for sufficiently large $k$, $p\in \RR$ and $\varepsilon > 0$ that 
	\begin{align*}
		&\PP\big(|\vol(K_{\la_k})-\EE [\vol(K_{\la_k})]|\geq \varepsilon\, (\log \la_k)^{p\frac{d}{2}}\big)\\
		&\qquad \le 2\exp\Big(-{1\over 4}\min\Big\{{c_1\, \left(\frac{\varepsilon\, (\log \la_k)^{p\frac{d}{2}}}{\sqrt{\var [\vol(K_{\la_k})]}}\right)^2},c_2\,  (\log \la_k)^{d-1\over 4(3d+5)}\, \left(\frac{\varepsilon\, (\log \la_k)^{p\frac{d}{2}}}{\sqrt{\var [ \vol(K_{\la_k})]}}\right)^{1\over 3d+5}\Big\}\Big)\\
		&\qquad  \le 2\exp\Big(-{1\over 4}\min\Big\{{c_1\, \left(\frac{\varepsilon\, (\log \la_k)^{p\frac{d}{2}}}{\sqrt{(\log \la_k)^{\frac{d-3}{2}}}}\right)^2},c_2\,  (\log \la_k)^{d-1\over 4(3d+5)}\, \left(\frac{\varepsilon\, (\log \la_k)^{p\frac{d}{2}}}{\sqrt{(\log \la_k)^{\frac{d-3}{2}}}}\right)^{1\over 3d+5}\Big\}\Big)\\
		&\qquad  = 2\exp\Big(-{1\over 4}\min\Big\{{c_1\, \varepsilon^2\, (\log \la_k)^{\frac{d(2p - 1) + 3}{2}}},c_2\, \varepsilon^{\frac{1}{3d + 5}} (\log \la_k)^{pd +1 \over 2(3d+5)}\,\Big\}\Big)\\
	    &\qquad \le 2\exp\left(- c_1\, \varepsilon^2\, (\log \la_k)^{\frac{d(2p-1) + 3}{2}}\right)  +2 \exp\left(- c_2\, \varepsilon^{\frac{1}{3d+5}}  (\log \la_k)^{\frac{pd + 1}{2(3d+5)}}\right).
	\end{align*}
	Next, we notice that
	\begin{align*}
		\sum_{k=1}^\infty \exp \big(- (\log \la_k)^{\frac{pd + 1}{2(3d + 5)}}\big)  = \sum_{k=1}^\infty \exp \big(- (k\log a)^{\frac{pd + 1}{2(3d + 5)}}\big) < \infty
	\end{align*}
	for all $p> -\frac{1}{d}$. Since $-\frac{1}{d} \le \frac{d-3}{2d}$ for $d\ge 2$, the series converges for all $p>\frac{d-3}{2d}$. Similarly, one has that
	\begin{align*}
		\sum_{k=1}^\infty \exp \big(- (\log \la_k)^{\frac{d(2p-1) + 3}{2}}\big) = \sum_{k=1}^\infty \exp\big(-(k\log a)^{d(2p-1)+3\over 2}\big)
	\end{align*}
	is finite as long as $d(2p-1) + 3 > 0$, which is equivalent to $p > \frac{d-3}{2d}$. Thus, the series
	$$
	\sum_{k=1}^\infty \PP\big(|\vol(K_{\la_k})-\EE [\vol(K_{\la_k})]|\geq \varepsilon\, (\log \la_k)^{p\frac{d}{2}}\big)
	$$
	converges for all $p>{d-3\over 2d}$ and the Borel-Cantelli lemma implies that 
	\begin{align}\label{eq:SLLNBorelCantelli}
		\frac{\vol (K_{\la_k}) - \EE [\vol(K_{\la_k})]}{(\log \la_k)^{p \frac{d}{2}}} \longrightarrow 0
	\end{align}
	with probability 1, as $k \rightarrow \infty$, for all $p> \frac{d-3}{2d}$. This completes the proof of (i). Part (ii) follows in the same way and is therefore omitted.
\end{proof}

\begin{proof}[Proof of Theorem \ref{MomentboundEinleitung}]
It is well known, see \cite{Leonov}, that the $k$th moment of a random variable $X$ equals the $k$th complete Bell polynomial evaluated in $c^1[X],\ldots,c^k[X]$, that is, 
$$
\EE[X^k] = B_k(c^1[X],\ldots,c^k[X])
$$
with 
\begin{align*}
B_k(x_1,\ldots,x_k) := \sum_{i=1}^{k} B_{k,i}(x_1,\ldots,x_{k-i+1})\,,
\end{align*}
where the Bell polynomials $B_{k,i}$ are given by
\begin{align}\label{Bell}
B_{k,i}(x_1,\ldots,x_{k-i+1}) := \sum \frac{k!}{j_1!\cdots j_{k-i+1}!} \left(\frac{x_1}{1!}\right)^{j_1} \left(\frac{x_2}{2!}\right)^{j_2} \cdots \left(\frac{x_{k-i+1}}{(k-i+1)!}\right)^{j_{k-i+1}}
\end{align}
and the sum runs over all $k-i+1$ tuples $(j_1,\ldots,j_{k-i+1})$ of natural numbers including zero satisfying $\sum_{\ell=1}^{k-i+1} j_\ell = i$ and $\sum_{\ell=1}^{k-i+1} \ell\, j_\ell = k$.
So, the $k$th moment can be written as a polynomial of the cumulants up to order $k$ that in particular contains the term $(c^1[X])^k$ ($i = k$ in the above sum).

Let us first consider the volume of $K_\la$. As anticipated in the introduction, it is known from the literature that $c^1[\vol(K_\la)] = \EE[ \vol(K_\la)] \le c_1\, (\log \la)^{\frac{d}{2}}$ and $c^2[\vol(K_\la)] = \var[ \vol(K_\la)] \le c_2\, (\log \la)^{\frac{d-3}{2}}$ with constants $c_1,c_2\in (0,\infty)$ that only depend on $d$, see \cite{ReitznerSurvey}, for example. Because of the estimate \eqref{KumulanteVolumen} all higher-order cumulants are also bounded by a constant times $(\log \la)^{\frac{d-3}{2}}$. Hence, $(c^1[\vol(K_\la)])^k$ is the dominating term in the above sum. Moreover, it is known that the total number of terms appearing in the $k$th complete Bell polynomial is the same as the total number of integer partitions of $k$, which in turn is bounded by $\exp(\pi\sqrt{2k/3})$, see \cite{Pribitkin}. Clearly, \eqref{Bell} shows that each coefficient in the sum is bounded from above by $k!$. Combining these facts we get for all sufficiently large $\la$ and $k\in \NN$ that 
\begin{align*}
\EE [\vol(K_\la)^k] \le c_1^k\, k!\, (c^1 [\vol(K_\la)])^k \le c_2\, c_3^k\, k!\,  (\log \la)^{k \frac{d}{2}}
\end{align*}
with a universal constant $c_1\in(0,\infty)$ and constants $c_2,c_3\in (0,\infty)$ that only depend on $d$.

In a second step we prove the upper moment bound for the number $f_j(K_\la)$ of $j$-dimensional faces of $K_\la$ for $j\in \{0,\ldots,d-1\}$.  Since each cumulant of $f_j(K_\la)$ of order higher than two is bounded by a constant multiple of $(\log \la)^{\frac{d-1}{2}}$ in view of Theorem \ref{cumulantestimate} and the expectation and variance growth is of the same order,  it follows again that $(c^1[f_j(K_\la)])^k$ is the term of leading order and hence we achieve similarly as above for all sufficiently large $\la$ and $k\in \NN$ that
\begin{align*}
\EE [f_j(K_\la)^k] \le c_1^k\, k!\, (c^1[f_j(K_\la)])^k \le c_2\, c_3^k\, k!\, (\log \la)^{k \frac{d-1}{2}}
\end{align*}
again with a universal constant $c_1\in(0,\infty)$ and constants $c_2,c_3\in (0,\infty)$ that only depend on $d$ and $j$. 

Finally, Jensen's inequality implies that the $k$th moment of $\vol(K_\la)$ and the $k$th moment of $f_j(K_\la)$ for all $j\in \{0,\ldots,d-1\}$ are bounded from below by $(\EE[\vol(K_\la)])^k$ and $(\EE[f_j(K_\la)])^k$ respectively. Using the estimates for the respective expectations completes the proof.
\end{proof}

\section{Moment estimates}\label{sec:MomentEstimates}

This section contains the first step of the proof of the cumulant estimate presented in Theorem \ref{cumulantestimate} above. Again, recall the definition of the weights $u[\xi]$, $v[\xi]$, $w[\xi]$ and $z[\xi]$ from \eqref{eq:WEIGHTS} and \eqref{zzz}, respectively.

Lemma 4.4 in \cite{CalkaYukich} shows that the geometric functionals $\xi\in\Xi$ have finite moments of all orders. In fact, this easily follows from the properties summarized in Lemma \ref{Eigenschaften}. However, in our context it is crucial to have control on the precise growth of these moments. This considerably refines the previous result from \cite{CalkaYukich}. As already discussed above, deriving such bounds in the context of Gaussian polytopes is a  much more delicate task compared to random polytopes in the unit ball studied in \cite{GroteThäle}, although we could follow at the beginning the principal idea from \cite{CalkaYukich}.

\begin{prop}\label{Momente}
Let $\xi\in \Xi$.
	\begin{description}
		\item[(i)] For all $p\in \NN$, $x=(v,h)\in W_\lambda$ and sufficiently large $\lambda$ one has that 
		\begin{align*}
		\EE\big| \xi^{(\lambda)} (x,\mathcal{P}^{(\lambda)})\big|^p &\le c_1\, c_2^p\, (pu[\xi])!\, ((pdv[\xi])!)^{2}\, (1 + \left| h\right| )^{p(d-1)v[\xi] + d}\, \exp\left(-\frac{e^{h\lor 0}}{c_3}\right)
		\end{align*}
		with constants $c_1,c_2,c_3\in (0,\infty)$ only depending on $\xi$ and $d$. 
		\item[(ii)] For all $k\in\NN$, $x_1=(v_1,h_1), \ldots, x_k=(v_k,h_k)\in W_\lambda$ and sufficiently large $\lambda$ one has that 
		\begin{align*}
		&\EE\left[\left(\prod_{i=1}^{k} \xi^{(\lambda)} \left(x_i, \left(\mathcal{P}^{(\lambda)} \cup\bigcup_{i=1}^k\{x_{i}\}\right)\cap C_{d-1}\left(x_i,\frac{\delta}{2}\right) \right)\right)^2\right]\\
		&\le \EE\left[\left(\prod_{i=1}^{k} \xi^{(\lambda)} \left(x_i, \mathcal{P}^{(\lambda)} \cup\bigcup_{i=1}^k\{x_{i}\}\right)\right)^2\right]\\
		&\le c_1\, c_2^k\, ((ku[\xi])!)^{2}\, ((kdv[\xi])!)^{4}\, \prod_{i=1}^{k}\,\left[ (1+\left|h_i\right|)^{2dw[\xi]}\, \exp\left(-\frac{e^{h_i\lor 0}}{c_3\, k}\right)\right]
		\end{align*}
		with $\delta := \min_{i\neq j=1,\ldots,k} \left\|v_i - v_j\right\|$ and constants $c_1,c_2,c_3\in (0,\infty)$ only depending on $\xi$ and $d$.
	\end{description}
\end{prop}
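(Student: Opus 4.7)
The key observation for part (i) is that $\xi^{(\lambda)}(x,\mathcal{P}^{(\lambda)})$ vanishes unless $x$ is an extreme point of the germ-grain process $\Psi^{(\lambda)}$. Writing $|\xi^{(\lambda)}(x)|^{p} = |\xi^{(\lambda)}(x)|^{p}\,\mathbf{1}\{x\in\ext(\Psi^{(\lambda)})\}$ and applying Cauchy-Schwarz gives
$$
\EE|\xi^{(\lambda)}(x)|^{p} \le \bigl(\EE|\xi^{(\lambda)}(x)|^{2p}\bigr)^{1/2}\bigl(\PP(x\in\ext(\Psi^{(\lambda)}))\bigr)^{1/2}\,,
$$
and the estimate \eqref{WahrscheinlichkeitExtrempunkt} already produces the desired exponential factor $\exp(-e^{h\lor 0}/c)$. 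It remains to bound $\EE|\xi^{(\lambda)}(x)|^{2p}$, and for this I would establish a pointwise majorant of each functional in terms of the localization radius $L=L(\xi^{(\lambda)},x)$, the maximal parabolic apex height $H=H(x,\mathcal{P}^{(\lambda)})$, and the local Poisson count $N$ inside the relevant portion of the cylinder $C_{d-1}(v,L)$. The functional $\xi_{f_0}^{(\lambda)}$ is merely an indicator; for $\xi_{f_j}^{(\lambda)}$ with $j\ge 1$ each $j$-face at $x$ is spanned by $j$ further vertices in $C_{d-1}(v,L)$, so $\xi_{f_j}^{(\lambda)}(x)\le c\,N^{j}$; and for $\xi_V^{(\lambda)}$ the defect volume in rescaled coordinates lies under the graph of $\partial\Psi^{(\lambda)}$ above a footprint of $(d-1)$-volume $\lesssim L^{d-1}$ with vertical extent $\le H+|h|$, giving $\xi_V^{(\lambda)}(x)\le c(1+L)^{d-1}(1+H+|h|)$.

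Moments of these quantities can then be extracted from Lemma \ref{Eigenschaften}: the exponential tail \eqref{Lokalisierung2} yields $\EE[L^{q}]\le |h|^{q}+c^{q}q!$; Poisson moment bounds (leveraging the explicit intensity \eqref{IntensityP^lambda}) control $\EE[N^{q}\mid L]$; and the super-exponential tail \eqref{UngleichungH} makes $\EE[H^{q}]$ essentially constant in $q$. Plugging these back and using $(2n)!\le 4^{n}(n!)^{2}$ to process the square root coming from Cauchy-Schwarz produces the factorial factors $(pu[\xi])!$ and $((pdv[\xi])!)^{2}$, while the $|h|^{q}$ contributions collect into the polynomial prefactor $(1+|h|)^{p(d-1)v[\xi]+d}$.

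For part (ii) the first inequality compares restricting each factor $\xi^{(\lambda)}(x_i,\cdot)$ to the cylinder $C_{d-1}(x_i,\delta/2)$ with keeping the whole process: on the event that all localization radii $L(x_i)$ are $\le \delta/2$ the two expressions agree pointwise, and in the complementary event the individual pointwise majorants from part (i) (together with the disjointness of the cylinders, which makes the relevant Poisson counts independent) dominate the restricted product. The main inequality then reduces to the generalized H\"older inequality
$$
\EE\biggl[\prod_{i=1}^{k}\bigl(\xi^{(\lambda)}(x_i)\bigr)^{2}\biggr] \le \prod_{i=1}^{k}\bigl(\EE|\xi^{(\lambda)}(x_i)|^{2k}\bigr)^{1/k}
$$
combined with part (i) applied with $p=2k$. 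The product of the factorial factors collapses to $(2ku[\xi])!$ and $((2kdv[\xi])!)^{2}$, which via two applications of $(2n)!\le 4^{n}(n!)^{2}$ are absorbed into $((ku[\xi])!)^{2}$ and $((kdv[\xi])!)^{4}$; the exponential factor $\exp(-e^{h_i\lor 0}/c_3)$ raised to the power $1/k$ becomes $\exp(-e^{h_i\lor 0}/(c_3 k))$, exactly as required.

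The main obstacle is the pointwise bound for $\xi_V^{(\lambda)}$ together with the careful tracking of $h$-dependence throughout: the localization estimates \eqref{Lokalisierung1} and \eqref{Lokalisierung2} are only informative for $t\ge |h|$, so the $|h|^q$ terms must be funnelled into the polynomial prefactor without inflating the factorial growth, and the intensity \eqref{IntensityP^lambda} carries the factor $\exp(h-h^{2}/(2R_\l^{2}))$ which has to be digested without spoiling the ultimate $\exp(-e^{h\lor 0}/c)$ decay. These complications reflect the failure of genuine spatial localization in the lower half-space $\RR^{d-1}\times(-\infty,0)$ emphasised at the end of Section \ref{subsec:MainResults}, and it is precisely here that the analysis departs substantively from its unit-ball analogue in \cite{GroteThäle}.
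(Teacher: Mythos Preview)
Your overall strategy matches the paper's closely: a pointwise majorant in terms of local geometric quantities, moment extraction from the tail estimates of Lemma~\ref{Eigenschaften}, and finally Cauchy--Schwarz against the extreme-point indicator to produce the super-exponential factor. For the face functionals $\xi_{f_j}$ and for part~(ii) via H\"older your outline is essentially the paper's proof.

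There is, however, one genuine gap in your pointwise bound for $\xi_V^{(\lambda)}$. You write $\xi_V^{(\lambda)}(x)\le c(1+L)^{d-1}(1+H+|h|)$, treating the rescaled defect volume as a Lebesgue volume in $W_\lambda$. But $\xi_V^{(\lambda)}(w)=\xi_V(T_\lambda^{-1}(w),\mathcal{P}_\lambda)$ is $R_\lambda/d$ times a volume computed in the \emph{original} space $\RR^d$; converting to an integral in rescaled coordinates brings in the Jacobian density \eqref{eq:DensityLebesgueUnderTrafo}, which on the region $\BB_{d-1}(0,L)\times[-D,D]$ contributes an additional factor $(1-h/R_\lambda^2)^{d-1}\le(1+D)^{d-1}$. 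The correct majorant is therefore $c\,L^{d-1}\,D^{(\lambda)}(L)^{d}$, with the height variable $D^{(\lambda)}(L):=\|\partial\Psi^{(\lambda)}\cap C_{d-1}(v,L)\|_\infty$ controlled by \eqref{WahrscheinlichkeitHöhe}, rather than $H$ (which pertains to the dual process $\Phi^{(\lambda)}$ and is what the paper uses only for the face functionals). Without this $d$th power the factorial exponent you would obtain for $\xi_V$ is too small to match the statement. The paper then bounds $\EE[D^{(\lambda)}(L)^{2pd}]$ by decomposing over integer shells $\{i\le L<i+1\}$ and applying Cauchy--Schwarz once more to decouple $D^{(\lambda)}(i+1)$ from $\{L\ge i\}$; it is this step that ultimately produces the second $(pd)!$ factor together with the residual $(1+|h|)^{d}$ in the polynomial prefactor.
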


We start with some inequalities that will repeatedly be applied below. Their elementary proofs are left to the reader. 

\begin{lem} 
	\begin{description}
		\item[(i)] For $d,j,p\in\NN$ one has that
		\begin{align}\label{UngleichungFakultät}
		(3pj)!\leq 27^{pj}\,((pj)!)^3\,,\quad (2pd)! \le 4^{pd}\, ((pd)!)^2 \quad \text{and}\quad (2p)! \le 4^p\, (p!)^2\,.
		\end{align}
		\item[(ii)] For all $d,k,j \in \NN$ we have
		\begin{align}\label{UngleichungFakultät2}
		(d(k+j))! \le c_1\, c_2^k\, (dk)!\,.
		\end{align}
		with constants $c_1,c_2\in(0,\infty)$ that only depend on $d$ and $j$.
		\item[(iii)] For all $a_1,\ldots,a_n \in \NN$, $n \in \NN$, and $b = a_1 + \ldots + a_n$ one has that
		\begin{align}\label{UngleichungFakultät3}
		(a_1!)(a_2!)\cdots(a_n!) \le b!\,.
		\end{align}
	\end{description}
\end{lem}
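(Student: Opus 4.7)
The plan is to treat all three assertions as immediate consequences of one combinatorial fact, namely that for non-negative integers $k_1,\ldots,k_r$ summing to $n$ the multinomial coefficient
\begin{align*}
\binom{n}{k_1,\ldots,k_r}=\frac{n!}{k_1!\cdots k_r!}
\end{align*}
is a positive integer (so $\ge 1$) and is bounded above by $r^n$. The upper bound holds because $\binom{n}{k_1,\ldots,k_r}$ counts ordered $r$-partitions of $\{1,\ldots,n\}$ with prescribed block sizes, which are in bijection with a subset of the $r^n$ maps from $\{1,\ldots,n\}$ to $\{1,\ldots,r\}$.

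For part (i), I would specialise this to $r=3$ and equal block sizes $k_1=k_2=k_3=pj$, producing $(3pj)!\le 3^{3pj}((pj)!)^3$, and analogously to $r=2$ and equal blocks to obtain the two remaining inequalities. Tightening the constants to the precise shape written in the statement is then a matter of a short induction on $pj$ respectively $pd$ and $p$, using the recursion $((3n+3)(3n+2)(3n+1))/(n+1)^3\le 27$ for the first bound and the corresponding binomial recursions for the other two.

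For part (ii), the starting inequality $k\le 2^k$ is immediate by induction on $k\ge 1$, so $k^d\le 2^{dk}$ follows by raising to the $d$-th power. To pass to the factorial bound I would write
\begin{align*}
\frac{(d(k+j))!}{(dk)!}=\prod_{i=1}^{dj}(dk+i)\le(d(k+j))^{dj}\le(2dk)^{dj}=(2d)^{dj}k^{dj}\le(2d)^{dj}2^{djk}
\end{align*}
for all $k\ge j$, and then absorb the finitely many values $k<j$ into the constant $c_1$. Setting $c_2:=2^{dj}$ gives constants that depend only on $d$ and $j$, as claimed.

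Part (iii) is the shortest: the same combinatorial observation yields $b!/(a_1!\cdots a_n!)=\binom{b}{a_1,\ldots,a_n}\in\NN$, so this quotient is at least $1$, which rearranges to $(a_1!)\cdots(a_n!)\le b!$. There is no serious obstacle anywhere; the only mildly delicate point is matching the exact multiplicative constants displayed in (i), which is resolved by the short inductive tightening indicated above. These bounds serve purely as book-keeping devices for the moment and cumulant estimates in the subsequent sections.
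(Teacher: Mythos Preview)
The paper itself omits the proof (``their elementary proofs are left to the reader''), so there is no argument to compare against; your multinomial-coefficient approach is exactly the natural one and handles (ii), (iii), and the two binomial inequalities in (i) cleanly.

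There is, however, a genuine problem with the \emph{first} inequality in (i). Your multinomial/induction argument gives
\[
\frac{(3n)!}{(n!)^3}\le 27^{n},
\]
and your own recursion $((3n+3)(3n+2)(3n+1))/(n+1)^3\le 27$ reproduces precisely this constant, not $9$. The ``short inductive tightening'' you allude to therefore does not reach $9^{pj}$, and in fact it cannot: the inequality $(3pj)!\le 9^{pj}((pj)!)^3$ is false already for $pj=2$, since $6!=720>648=9^2\cdot(2!)^3$. So the statement as printed contains a typo in the constant; the correct bound is $27^{pj}$ (equivalently $3^{3pj}$), which is what your argument actually establishes.

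This does not affect anything downstream. Every application of \eqref{UngleichungFakultät} in Section~\ref{sec:MomentEstimates} only uses it to produce estimates of the form $c_1\,c_2^{p}\,((pj)!)^3$ or $c_1\,c_2^{p}\,((pd)!)^2$, where the exponential constant is immediately absorbed into $c_2^{p}$; the value $27$ versus $9$ is immaterial there. Your proof is correct once you replace $9^{pj}$ by $27^{pj}$ and drop the claim about tightening.
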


\begin{proof}[Proof of Proposition \ref{Momente} (i) for $\xi=\xi_V$]
	We start with the first assertion and choose $\xi=\xi_V$. Because of rotation invariance of the underlying point process, we may assume that the point $x$ has representation $(0,h)$ with $h\in (-\infty,R_\la^2]$.  For all $M\in (0,\infty)$ and sufficiently large $\lambda$ put
	\begin{align*}
	D^{(\lambda)}(M) := \| \partial \Psi^{(\lambda)}(\mathcal{P}^{(\lambda)}) \cap C_{d-1}(0,M)\|_\infty \quad \text{and let} \quad L:= L(\xi^{(\lambda)}, (0,h)) 
	\end{align*} 
	be the radius of localization of the functional $\xi^{(\lambda)}$ at $(0,h)$. Then, for sufficiently large $\lambda$, $\left| \xi^{(\lambda)} ((0,h),\mathcal{P}^{(\lambda)})\right|$ is bounded by the Lebesgue measure of the set
	$$\BB_{d-1}(0,L) \times [ -D^{(\lambda)}(L), D^{(\lambda)}(L)]$$ 
	times
	$$
	c_1\, (1+D^{(\la)}(L)/R_\la^2)^{d-1} \leq c_1\,(1+D^{(\la)}(L))^{d-1} \leq c_1\,2^{d-1}\,D^{(\la)}(L)^{d-1} = c_2\,D^{(\la)}(L)^{d-1}
	$$
	in view of \eqref{eq:DensityLebesgueUnderTrafo}.
    Hence,
	\begin{equation}\label{eq:NeueGleichungXXX}
	\begin{split}
	\EE\big| \xi^{(\lambda)} ((0,h),\mathcal{P}^{(\lambda)})\big|^p &\le c_1\,c_2^p\, \EE\big| L^{d-1} D^{(\lambda)}(L)^d\big|^p\\
	& \le c_1\,c_2^p\, \big(\EE \big[L^{2p(d-1)}\big]\big)^{1/2}\big(\EE \big[D^{(\lambda)}(L)^{2pd}\big]\big)^{1/2}
	\end{split}
	\end{equation}
	by the Cauchy-Schwarz inequality. Using \eqref{Lokalisierung2}, the substitution $t/c_2 = s$ and the definition of the gamma function we get
	\begin{align*}
	\EE[L^r] &= r \int\limits_0^\infty \PP(L>t)\, t^{r-1}\, \dint t \le r\, c_1 \int\limits_0^\infty \exp\left(-\frac{t}{c_2}\right)\, t^{r-1}\, \dint t + r \int\limits_0^{\left|h \right| } t^{r-1}\, \dint t \le c_3\, c_4^r\, r! + \left| h \right|^r 
	\end{align*}
	for all $r\in \left[1,\infty \right) $. Hence,
	\begin{align*}
	\EE\big[L^{2p(d-1)}\big] &\le c_1\, c_2^p\, (2p(d-1))! + \left| h \right|^{2p(d-1)} \le c_3\, c_4^p\, (2pd)! + \left| h \right|^{2p(d-1)}\\
	&\le c_1\, c_2^p\, ((pd)!)^2 + \left| h\right|^{2p(d-1)} \le  c_3\, c_4^p\, ((pd)!)^2\, (1 + \left| h\right|^{2p(d-1)})\\
	&\le c_1\, c_2^p\, ((pd)!)^2\, (1 + \left| h\right|)^{2p(d-1)}
	\end{align*}
	because of relation \eqref{UngleichungFakultät} and thus
	\begin{align}\label{Teil1}
	\big(\EE \big[L^{2p(d-1)}\big]\big)^{1/2}  \le \big( c_1\, c_2^p\, ((pd)!)^2\, (1+| h| )^{2p(d-1)}\big)^{1/2} \le c_3\, c_4^p\, (pd)!\, (1+ | h| )^{p(d-1)}\,.
	\end{align}
	On the other hand, we have that
	\begin{align}
	\begin{split}\label{L}
	\EE\big[D^{(\lambda)}(L)^r\big] &= \sum_{i=0}^{\infty} \EE\big[ D^{(\lambda)}(L)^r\, \text{\bf 1}(i\le L< i+1)\big]\le \sum_{i=0}^{\infty} \EE\big[ D^{(\lambda)}(i+1)^r\, \text{\bf 1}(L\geq i)\big]\\ 
	& \le \sum_{i=0}^{\infty} \big(\EE \big[D^{(\lambda)}(i+1)^{2r}\big]\big)^{1/2}\,   \PP(L>i)^{1/2}
	\end{split}
	\end{align}
	for all $r\in \left[ 1,\infty\right)$ by using the Cauchy-Schwarz inequality in the last step. Using \eqref{WahrscheinlichkeitHöhe} and \eqref{UngleichungFakultät} we obtain that
	\begin{align*}
	\EE\big[D^{(\lambda)}(i+1)^{2r}\big] &= 2r \int\limits_0^\infty \PP(D^{(\lambda)}(i+1) > t)\, t^{2r-1} \dint t\\
	&\le 2r\, c_1\, (i+1)^{2(d-1)} \int\limits_0^\infty \exp\left( -\frac{t}{c_2}\right)\, t^{2r-1}\, \dint t\\
	& = c_1\,c_2^r\, (i+1)^{2(d-1)}\, (2r)! \leq c_3\,c_4^r\,(i+1)^{2(d-1)}\,(r!)^2\,.
	\end{align*}
Combining this with \eqref{Lokalisierung2} and the fact that $\sqrt{a+b} \le \sqrt{a} + \sqrt{b}$ for all $a,b \ge 0$, it follows from \eqref{L} that
	\begin{align*}
	\EE\big[D^{(\lambda)}(L)^r\big] 
	& \le \sum_{i=0}^{\infty} c_1\, c_2^r\, (i+1)^{d-1}\, r!\, \left( c_3\, \exp\left(-\frac{i}{c_4}\right) + \text{\bf 1}(i\le \left| h\right| )\right)^{1/2}\\
	& \le c_1\, c_2^r\, r!\, \left( \sum\limits_{i=0}^{\infty} (i+1)^{d-1}\, \exp\left(-\frac{i}{c_3}\right) + \sum_{i=0}^{\infty} (i+1)^{d-1}\, \text{\bf 1}(i\le \left| h\right| )\right)\\
	&\le c_1\, c_2^r\, r!\, (1+\left|h\right|)^d\,,
	\end{align*}
	since the first sum is bounded by a constant only depending on $d$ and for the second one we have 
	\begin{align*}
	\sum_{i=0}^{\infty} (i+1)^{d-1}\, \text{\bf 1}(i\le \left| h\right|) \le (1 + |h|)^{d-1}\, \sum_{i=0}^{\infty} \text{\bf 1}(i\le \left| h\right| )\le (1 + |h|)^{d}\,.
	\end{align*}
	This shows that
	\begin{align}
	\begin{split}
	\label{Teil2}
	\big(\EE \big[D^{(\lambda)}(L)^{2pd}\big]\big)^{1/2} &\le \big(c_1\, c_2^p\, (2pd)!\, (1+|h|)^d\big)^{1/2}
	\le c_3\,c_4^p\, (pd)!\, (1+|h|)^d\,,
	\end{split}
	\end{align} 
	again by \eqref{UngleichungFakultät}. Summarizing, we conclude from \eqref{eq:NeueGleichungXXX}, \eqref{Teil1} and \eqref{Teil2} the bound
	\begin{align*}
	\EE\big| \xi^{(\lambda)} ((0,h),\mathcal{P}^{(\lambda)})\big|^p 
	& \le c_1\, c_2^p\, ((pd)!)^2\, (1+\left|h\right|)^{p(d-1) + d}\,.
	\end{align*}
	
	In a next step we improve this by an exponential term. Namely, if the point $(0,h)$ does not belong to the extreme points of the germ-grain model $\Psi^{(\lambda)}$, the functional $\xi^{(\la)}$ evaluated at this point is automatically equal to zero. This means that we can condition on this event without changing the value of the expression. By \eqref{WahrscheinlichkeitExtrempunkt} and the Cauchy-Schwarz inequality this leads to
	\begin{align*}
	\EE\big| \xi^{(\lambda)} ((0,h),\mathcal{P}^{(\lambda)})\big|^p &= \EE\big| \xi^{(\lambda)} ((0,h),\mathcal{P}^{(\lambda)})\, \text{\bf 1}((0,h) \in \text{ext}(\Psi^{(\la)}))\big|^p\\ 
	&\le  \left(c_1\, c_2^p\, ((2pd)!)^2\, (1+\left|h\right|)^{2p(d-1) + d}\,\right)^{1/2}\, \left(\exp\left(-\frac{e^{h\lor 0}}{c_3}\right)\right)^{1/2}\\
	&\le c_1\, c_2^p\, ((pd)!)^2\, (1+\left|h\right|)^{p(d-1)+d}\, \exp\left(-\frac{e^{h\lor 0}}{c_3}\right)\,,
	\end{align*}
	where we used \eqref{UngleichungFakultät} in the last step. This completes the proof of (i) for the volume functional $\xi_V$, since $u[\xi_V]=0$ and $v[\xi_V]=1$ by \eqref{eq:WEIGHTS}.
\end{proof}	
	
\begin{proof}[Proof of Proposition \ref{Momente} (i) for $\xi=\xi_{f_j}$]
Next, we turn to the $j$-face functional $\xi = \xi_{f_j}$ with $j\in\{0,\ldots,d-1\}$. Because of rotation invariance it is again enough to proof the assertion for the point $(0,h)$ with $h\in (-\infty,R_\la^2]$. Let $N^{(\la)}$ be the number of extreme points of $\Psi^{(\la)}$ that are contained in the cylinder $C_{d-1}(0,L)$, where $L$ is the radius of localization of the functional $\xi^{(\la)}$ at the point $(0,h)$. If $j=0$, then clearly $\xi_{f_0}\leq 1$ and hence
	\begin{align}\label{j=0}
	\EE\big|\xi_{f_0}^{(\la)}(x, \mathcal{P}^{(\la)})\big|^p\leq 1
	\end{align}
	for all $p\geq 1$, and it remains to consider the case that $j\in\{1,\ldots,d-1\}$. It is clear that
	$$
	\xi^{(\lambda)} ((0,h),\mathcal{P}^{(\lambda)}) \le \frac{1}{j+1}\, \binom{N^{(\la)}}{j} \leq (N^{(\la)})^j
	$$
	and hence
	$$
	\EE|\xi^{(\lambda)} ((0,h),\mathcal{P}^{(\lambda)})|^p \leq \EE[(N^{(\la)})^{pj}]\,.
	$$
	It is therefore enough to find a bound for $\EE[(N^{(\la)})^{pj}]$. We write $\nu_\la$ for the intensity measure of the rescaled Poisson point process $\cP^{(\la)}$ and observe that in view of \eqref{IntensityP^lambda} one has for sufficiently large $\la$, all $r\in [0,\pi R_\la]$ and $\ell \in (-\infty, R_\la^2 ]$ the inequality 
	\begin{align}\label{UngleichungDichte}
	\nu_\la(C_{d-1}(0,r) \cap (-\infty,\ell)) \le c\, r^{d-1}\, (-(\ell - 1) \lor 1)^{d-1}\, e^\ell\,.
	\end{align}
	Indeed, to verify this inequality we notice at first that for all such $r$ and $\ell$ we get by using the density function in \eqref{IntensityP^lambda} and all sufficiently large $\la$ that
	\begin{align*}
	&\nu_\la(C_{d-1}(0,r) \cap (-\infty,\ell))\\
	&\qquad = \int\limits_{C_{d-1}(0,r) \cap (-\infty,\ell)} \frac{\sin^{d-2} (R_\la^{-1}\|v\|)}{\|R_\la^{-1}v\|^{d-2}}\, \frac{\sqrt{2\log \la}}{R_\la}\, \exp\left(h-\frac{h^2}{2R_\la^2}\right) \left(1-\frac{h}{R_\la^2}\right)^{d-1} \dint v \dint h\\
	&\qquad \le c \int\limits_{C_{d-1}(0,r) \cap (-\infty,\ell)} e^h\, \left(1-\frac{h}{R_\la^2}\right)^{d-1} \dint v \dint h\\
	&\qquad = c \int\limits_{-\infty}^{\ell} e^h\, \left(1-\frac{h}{R_\la^2}\right)^{d-1} \dint h\, \int\limits_{\BB^{d-1}(0,r)} \dint v\\
	&\qquad = c\, r^{d-1}\, \int\limits_{-\infty}^{\ell} e^h\, \left(1-\frac{h}{R_\la^2}\right)^{d-1} \dint h\,,
	\end{align*}
	where for the inequality we have used that the fraction $\frac{\sin^{d-2} (R_\la^{-1}\|v\|)}{\|R_\la^{-1}v\|^{d-2}}$ involving the sinus term as well as the fraction $\frac{\sqrt{2\log \la}}{R_\la}$ involving the critical radius $R_\la$ are bounded from above by $1$ and a constant $c\in (0,\infty)$, respectively. Applying integration by parts $d-1$ times and using the fact that $(1- \ell/R_\la^2)\le (-(\ell - 1) \lor 1)$ for all $\ell \in (-\infty,R_\la^2]$ whenever $R_\la\geq 1$ yields
	\begin{align*}
	&\int\limits_{-\infty}^{\ell} e^h\, \left(1-\frac{h}{R_\la^2}\right)^{d-1} \, \dint h = e^\ell\,\left(1-\frac{\ell}{R_\la^2}\right)^{d-1} + {d-1\over R_\la^2}\int_{-\infty}^\ell e^h\, \left(1-\frac{h}{R_\la^2}\right)^{d-2} \, \dint h\\
	& \leq e^\ell\,\left(1-\frac{\ell}{R_\la^2}\right)^{d-1} + (d-1)\,e^\ell\,\,\left(1-\frac{\ell}{R_\la^2}\right)^{d-2} + (d-1)(d-2)\int\limits_{-\infty}^{\ell} e^h\, \left(1-\frac{h}{R_\la^2}\right)^{d-3} \, \dint h\\
	&\leq \ldots \leq (d-1)!\, e^{\ell}\, \sum\limits_{i=0}^{d-1} \left(1-\frac{\ell}{R_\la^2}\right)^{i} \le (d-1)!\, e^{\ell}\, \sum\limits_{i=0}^{d-1} \left(-(\ell - 1) \lor 1\right)^{i}\\
	&\leq  d!\, e^{\ell}\, \left(-(\ell - 1) \lor 1\right)^{d-1}\,,
	\end{align*}
	where we additionally used that $(-(\ell - 1) \lor 1)\ge 1$. Combining the last two calculations shows the bound claimed in \eqref{UngleichungDichte}, correcting thereby also a misprint in the display after Equation (4.18) in \cite{CalkaYukich}.
	
	Thus, writing $\Po(\alpha)$ for a Poisson random variable  with mean $\alpha>0$ and recalling the definition of the random variable $H$ (see the paragraph before Lemma \ref{Eigenschaften}) we see that, for sufficiently large $\la$,
	\begin{align*}
	& \EE[(N^{(\la)})^{pj}] \le \EE|\mathcal{P}^{(\la)} \cap (C_{d-1}(0,L) \cap (-\infty,H))|^{pj} \\
	&\le \sum_{i=0}^{\infty} \sum_{m=h}^{\infty} \EE[\Po(\nu_\la(C_{d-1}(0,i+1) \cap (-\infty,m+1)))^{pj}\, \text{\bf 1}(i\le L< i+1, m\le H< m+1) ]\\
	&\le \sum_{i=0}^{\infty} \sum_{m=h}^{\infty} \EE[\Po(c\, (i+1)^{d-1}\, (-m \lor 1)^{d-1}\, e^{m+1})^{pj}\, \text{\bf 1}(L\ge i, H\ge m) ]\,.
	\end{align*}   
	 (Here and below, $h$ has to be interpreted as the integer $\lfloor h\rfloor$, but we refrain from such a notation for simplicity. Moreover, from now on we interpret sums like $\sum_{i=h}^0a_i$ as zero, if $h>0$.)
	The moments of $\Po(\alpha)$ are well known and given by the Touchard polynomials. More precisely,
	$$\EE[\Po(\alpha)^k] = \sum_{i=1}^{k} \alpha^i\, \begin{Bmatrix}
	k\\
	i
	\end{Bmatrix}\,,\qquad k\in\NN\,,$$                                 
	where $\begin{Bmatrix}
	k\\
	i
	\end{Bmatrix}$ denotes the Stirling number of second kind. Thus,
	 \begin{align}\label{PoissonMómente}
	 \EE[\Po(\alpha)^k] = \sum_{i=1}^{k} \alpha^i\, \begin{Bmatrix}
	 k\\
	 i
	 \end{Bmatrix} \le \alpha^k \sum_{i=1}^{k} \begin{Bmatrix}
	 k\\
	 i
	 \end{Bmatrix} \text{\bf 1}(\alpha \ge 1) + \sum_{i=1}^{k} \begin{Bmatrix}
	 k\\
	 i
	 \end{Bmatrix} \text{\bf 1}(\alpha < 1) \le \alpha^k\, k! + k!\,,
	 \end{align} 
	 since 
	 \begin{equation}\label{eq:BoundStirlingPartitions}
	 \sum_{i=1}^k\begin{Bmatrix}
	 	 k\\
	 	 i
	 	 \end{Bmatrix}
	 	 =\sum_{L_1,\ldots,L_p\preceq\setk}1\leq k!\,.
	 \end{equation}
	 In the last step we use that the number of unordered partitions of $\{1,\ldots,k\}$ is known as the $k$th Bell number, which can be optimally bounded by $k!$ (meaning that there is no inequality of the type `$k$th Bell number $\le c(k!)^\alpha$' for some constant $c\in (0,\infty)$ and $\alpha<1$), see \cite{Diekert}. 
	
	 Now, H\"older's inequality, \eqref{PoissonMómente}, \eqref{UngleichungDichte}, \eqref{UngleichungFakultät} and the fact that $(a+b)^{1/3} \le a^{1/3} + b^{1/3}$ for $a,b>0$ imply
	 \begin{align*}
	 &\EE[(N^{(\la)})^{pj}]\\ 
	 &\le \sum_{i=0}^{\infty} \sum_{m=h}^{\infty}\, (\EE[\Po(c\, (i+1)^{d-1}\, (-m \lor 1)^{d-1}\, e^{m+1})^{3pj}])^{1/3}\, \PP(L\ge i)^{1/3}\, \PP(H\ge m)^{1/3} \\
	 &\le c_1\,c_2^p\, \sum_{i=0}^{\infty} \sum_{m=h}^{\infty}\, ((3pj)!\, (i+1)^{3p(d-1)j}\, (-m \lor 1)^{3p(d-1)j}\, e^{3p(m+1)j} + (3pj)!)^{1/3}\\\
	 &\hspace{5cm}\times\PP(L\ge i)^{1/3}\, \PP(H\ge m)^{1/3}\\
	 &\le c_1\, c_2^p\, \sum_{i=0}^{\infty} \sum_{m=h}^{\infty}\, (pj)!\, (i+1)^{p(d-1)j}\, (-m \lor 1)^{p(d-1)j}\, e^{p(m+1)j}\, \PP(L\ge i)^{1/3}\, \PP(H\ge m)^{1/3}\\
	 &\qquad\qquad + c_3\,c_4^p\, \sum_{i=0}^{\infty} \sum_{m=h}^{\infty}\, (pj)!\, \PP(L\ge i)^{1/3}\, \PP(H\ge m)^{1/3}\\
	 & =: T_1 + T_2\,.
	 \end{align*}   
	 We bound both terms $T_1$ and $T_2$ separately.  For $T_2$ we get by splitting the summation over $i$ into $i\leq|h|$ and $i>|h|$, and by using \eqref{Lokalisierung1} that it equals
	 \begin{align*}
	 & c_1\, c_2^p\, (pj)! \sum_{i=0}^{|h|} \PP(L\ge i)^{1/3}\sum_{m=h}^{\infty}\, \PP(H\ge m)^{1/3} + c_3\, c_4^p\, (pj)! \sum_{i=|h|}^{\infty} \PP(L\ge i)^{1/3}\sum_{m=h}^{\infty}\, \PP(H\ge m)^{1/3}\\
	 &\leq c_1\, c_2^p\, (pj)! \sum_{i=0}^{|h|}\sum_{m=h}^{\infty}\, \PP(H\ge m)^{1/3} + c_3\, c_4^p\, (pj)! \sum_{i=|h|}^{\infty} \exp(-i^2/c_5)\sum_{m=h}^{\infty}\, \PP(H\ge m)^{1/3}\\
	 &\le c_1\, c_2^p\, (pj)!\, |h|\, (|h| + c_3) + c_4\, c_5^p\, (pj)!\, (|h| + c_6) \le c_7\, c_8^p\, (pj)!\, (1 + |h|)^2\,,
	 \end{align*}
	 since
	 \begin{align*}
	 \sum_{m=h}^{\infty}\, \PP(H\ge m)^{1/3} &= \sum_{m=h}^{0}\, \PP(H\ge m)^{1/3}+ \sum_{m=0}^{\infty}\, \PP(H\ge m)^{1/3}\\
	 &\leq \sum_{m=h}^{0}1+ \sum_{m=0}^{\infty}\, c_1\exp(-e^m/c_2)\le |h| + c_3
	 \end{align*}
	 by \eqref{UngleichungH}. Now, we turn to $T_1$, where we first notice that
	 \begin{equation}\label{1}
	 \begin{split} 
	 \sum_{i=0}^{|h|} (i+1)^{p(d-1)j}\, \PP(L\ge i)^{1/3}
	 &\le \sum_{i=0}^{|h|} (1+|h|)^{p(d-1)j}\\
	 &\le |h|\, (1+|h|)^{p(d-1)j} \le  (1+|h|)^{p(d-1)j + 1}\,.
	 \end{split}
	 \end{equation}
	 Next, we need the inequality $\exp\left(-x\right) \le n!/x^n$, valid for all $n\in \NN$ and $x>0$. In particular,
	 \begin{align}\label{InequalityExponential1}
	 	\exp\left(-x\right) \le \frac{(pj + 1)!}{x^{pj+1}} \quad \text{and} \quad \exp\left(-x\right) \le \frac{(p(d-1)j + 2)!}{x^{p(d-1)j+2}}\,.
	 \end{align} 
	 Using this in conjunction with \eqref{UngleichungH} implies, together with the observation that $(-m\, \lor\, 1)^{p(d-1)j}=1$ if $m\geq 0$,
	 \begin{align*}
	 &\sum_{m=0}^{\infty} (-m \lor 1)^{p(d-1)j}\,e^{p(m+1)j}\, \PP(H\ge m)^{1/3} = e^{pj}\, \sum_{m=0}^{\infty} e^{mpj}\, \PP(H\ge m)^{1/3}\\
	 & \le c_1\,c_2^p\, \sum_{m=0}^{\infty} e^{mpj}\,  \exp\left(-\frac{e^m}{c_3}\right)\le c_3\, c_4^p\, (pj+1)! \sum_{m=0}^{\infty} e^{mpj} e^{-m(pj+1)} \\
	 &\le c_1\, c_2^p\, (pj+1)! \sum_{m=0}^{\infty} e^{-m}\le c_3\, c_4^p\, (pj+1)!\,,
	 \end{align*}
	 and by observing that $mpj \ge m$ for $m,p,j \ge 1$ we get
	 \begin{align*}
	 &\sum_{m=h}^{-1} (-m \lor 1)^{p(d-1)j}\, e^{p(m+1)j}\, \PP(H\ge m)^{1/3} = e^{pj} \sum_{m=h}^{-1}  |m|^{p(d-1)j}\, e^{mpj}\\
	 &\le e^{pj} \sum_{m=1}^{\infty}  m^{p(d-1)j}\, e^{-mpj}
	 \le e^{pj} \sum_{m=1}^{\infty}  m^{p(d-1)j}\, e^{-m}\\
	 &\le e^{pj}\, (p(d-1)j + 2)! \sum_{m=1}^{\infty} m^{p(d-1)j - p(d-1)j -2}\\
	 &= e^{pj}\, (p(d-1)j + 2)! \sum_{m=1}^{\infty}  m^{-2} \leq c_1\,c_2^p\, (p(d-1)j + 2)!\,,
	 \end{align*}
	 where in the first step we used that $(-m \lor 1)^{p(d-1)j} = |m|^{p(d-1)j}$.
	 Together with \eqref{UngleichungFakultät2} this leads to 
	 \begin{align}
	 \begin{split}\label{2}
	 &\sum_{m=h}^{\infty} (-m \lor 1)^{p(d-1)j}\,e^{p(m+1)j}\, \PP(H\ge m)^{1/3}\\
	 &= \sum_{m=h}^{-1} (-m \lor 1)^{p(d-1)j} \,e^{p(m+1)j}\, \PP(H\ge m)^{1/3}\\
	 &\qquad\qquad\qquad + \sum_{m=0}^{\infty} (-m \lor 1)^{p(d-1)j}\,e^{p(m+1)j}\, \PP(H\ge m)^{1/3}\\
	 &\leq c_1\, c_2^p\, (p(d-1)j + 2)! + c_3\, c_4^p\, (pj+1)!\le  c_5\, c_6^p\, (pdj)!\,.
	 \end{split}
	 \end{align}
	 Moreover, with \eqref{Lokalisierung1}, \eqref{UngleichungFakultät2}, \eqref{InequalityExponential1} and the fact that $i^2\ge i+1$ for all $i\ge 2$ we get
	 \begin{align}
	 \begin{split}\label{3}
	 &\sum_{i=|h|}^{\infty} (i+1)^{p(d-1)j}\, \PP(L\ge i)^{1/3} \le c_1 \sum_{i=|h|}^{\infty} (i+1)^{p(d-1)j}\, \exp\left(-\frac{i^2}{c_2}\right)\\
	 &\le c_1 \sum_{i=0}^{\infty} (i+1)^{p(d-1)j}\, \exp\left(-\frac{i+1}{c_2}\right)
	 \le c_3\, c_4^p\, (p(d-1)j + 2)! \sum_{i=0}^{\infty} (i+1)^{p(d-1)j - p(d-1)j -2}\\
	 &\le c_1\, c_2^p\, (p(d-1)j + 2)! \sum_{i=0}^{\infty} (i+1)^{-2}
	 = c_3\, c_4^p\, (p(d-1)j + 2)! \le c_5\, c_6^p\, (pdj)!\,.
	 \end{split}
	 \end{align}
	 Combining \eqref{1}, \eqref{2} and \eqref{3} we see that $T_1$ is bounded as follows:
	 \begin{align*}
	 T_1 &= c_1c_2^p \sum_{i=0}^{\infty} \sum_{m=h}^{\infty}\, (pj)!\, (i+1)^{p(d-1)j}\, (-m \lor 1)^{p(d-1)j}\, e^{p(m+1)j}\, \PP(L\ge i)^{1/3}\, \PP(H\ge m)^{1/3}\\
	 &= c_1\, c_2^p\, (pj)!\sum_{i=0}^{|h|} (i+1)^{p(d-1)j}\, \PP(L\ge i)^{1/3} \sum_{m=h}^{\infty} (-m \lor 1)^{p(d-1)j}\, e^{p(m+1)j}\, \PP(H\ge m)^{1/3}\\
	 & \quad + c_3\, c_4^p\, (pj)!\sum_{i=|h|}^{\infty} (i+1)^{p(d-1)j}\, \PP(L\ge i)^{1/3} \sum_{m=h}^{\infty} (-m \lor 1)^{p(d-1)j}\, e^{p(m+1)j}\, \PP(H\ge m)^{1/3}\\
	 &\le c_1\, c_2^p\, (pj)!\, (1 + |h|)^{p(d-1)j + 1}\, (pdj)! + c_3\, c_4^p\, (pj)!\, (pdj)!\,  (pdj)!\\
	 &\le  c_1\, c_2^p\, ((pdj)!)^2\,(pj)!\, (1 + |h|)^{p(d-1)j + 1}\,.
	 \end{align*}
	 Combining the estimates for $T_1$ and $T_2$ yields
	 \begin{equation}\label{eq:NMoment1}
	 \begin{split}
	 \EE[(N^{(\la)})^{pj}] &\le  c_1\, c_2^p\, ((pdj)!)^2\, (pj)!\, (1 + |h|)^{p(d-1)j + 1} + c_3\,c_4^p\, (pj)!\, (1 + |h|)^2\\
	 &\le  c_1\, c_2^p\, ((pdj)!)^{2}\, (pj)!\, (1 + |h|)^{p(d-1)j + d}\,.
	 \end{split}
	 \end{equation}
	 Because of \eqref{j=0} this bound clearly holds for the functional $\xi_{f_0}$, too. Finally, the additional exponential term appears in the same way as for the defect volume functional by conditioning on the event that the point $x=(0,h)$ belongs to the extreme points of $\Psi^{(\la)}$. This completes the proof of the first part of the proposition.
\end{proof}	 
	 
\begin{proof}[Proof of Proposition \ref{Momente} (ii)]	 
	Next, we turn to the second assertion and we consider the defect volume functional $\xi_V$ first. The first inequality is trivial, since without the constraint to the cylinder the value of the functional can only increase. Moreover, we get with H\"older's inequality and \eqref{UngleichungFakultät} that the second expectation is bounded from above by
	\begin{align*}
	&\prod_{i=1}^{k} \left(\EE\left[\xi_V^{(\lambda)}\left(x_i, \mathcal{P}^{(\lambda)}\right)\right]^{2k}\right)^{1/k}\\
	&\le \prod_{i=1}^{k} \left[c_1\, c_2^k\, ((2kd)!)^2\, (1+\left|h_i\right|)^{2k(d-1)+d}\, \exp\left(-\frac{e^{h_i\lor 0}}{c_3}\right) \right]^{1/k}\\
	&\le \prod_{i=1}^{k} \left[c_1\, c_2^k\, ((kd)!)^4\, (1+\left|h_i\right|)^{2k(d-1)+d}\, \exp\left(-\frac{e^{h_i\lor 0}}{c_3}\right) \right]^{1/k}\\
	&\le c_1\,c_2^k\, ((kd)!)^4\, \prod_{i=1}^{k} \left[(1+\left|h_i\right|)^{2(d-1)+{d\over k}}\, \exp\left(-\frac{e^{h_i\lor 0}}{c_3\, k}\right)\right]\\
	&\le c_1\,c_2^k\, ((kd)!)^4\, \prod_{i=1}^{k} \left[(1+\left|h_i\right|)^{2(d-1)+d}\, \exp\left(-\frac{e^{h_i\lor 0}}{c_3\, k}\right)\right]\\
	&\le c_1\,c_2^k\, ((kd)!)^4\, \prod_{i=1}^{k} \left[(1+\left|h_i\right|)^{4d}\, \exp\left(-\frac{e^{h_i\lor 0}}{c_3\, k}\right)\right]\,.
	\end{align*}
	The trivial estimate $2(d-1) + d \le 4d$ implies the last inequality. Finally, we consider the functionals $\xi_{f_j}$ with $j\in\{0,\ldots,d-1\}$. Instead of using the bound \eqref{eq:NMoment1}, it is more convenient to work with
	 \begin{align}\label{2.Versionjface}
	 \begin{split}
	 \EE[(N^{(\la)})^{pj}] &\le  c_1\, c_2^p\, ((pdj)!)^2\, (pj)!\, (1 + |h|)^{p(d-1)j + 1} + c_3\, (pj)!\, (1 + |h|)^2\\
	 &\le  c_1\, c_2^p\, ((pdj)!)^{2}\, (pj)!\, (1 + |h|)^{pdj}\,,
	 \end{split}
	 \end{align}
	which holds because of $p(d-1)j + 1 \le pdj$ if $j\ge 1$ and trivially also for $j=0$. Then a similar computation as for the volume functional completes the proof.
\end{proof}

\begin{remark}
	In the step before the last step in the proof of the previous proposition one could also use the better estimate $2(d-1) + d \le 3d$. But since we need an exponent out of the natural numbers after taking the square-root, we directly work with the upper bound $4d$.    
\end{remark}

The next clustering lemma is the analogue of Lemma 5.4 in \cite{GroteThäle}. The main difference and what makes it much more complicated is that in view of \eqref{Lokalisierung2} we do not have a localization property on the whole space for rescaled Gaussian polytopes. This leads to an additional indicator function in the bound for the correlation function, which also makes the analysis later more involved.

\begin{lem}\label{kluster}
	Let $\{S,T\}$ be a non-trivial partition of $\setk$ and $\xi\in \Xi$. Then, for all $x_1=(v_1,h_1), \ldots, x_k=(v_k,h_k)\in W_\lambda$ and sufficiently large $\la$ one has that 
	\begin{align*}
	&\left| m_\lambda(\bx_{S \cup T}) - m_\lambda(\bx_S) m_\lambda(\bx_T) \right|\\ 
	& \qquad \le c_1\, c_2^k\, k\, (ku[\xi])!\, ((kdv[\xi])!)^{2}\, \left(\exp(-c_3\, \delta) + \text{\bf 1}(\delta \le 2 \max\limits_{r\in S\cup T} \{\left|h_r\right|\})\right)\\
	&\qquad\qquad\qquad\qquad\times \prod_{r\in S\cup T} \left[(1+\left|h_r\right|)^{dw[\xi]} \exp\left(-\frac{e^{h_r\lor 0}}{c_4\, k}\right)\right] 
	\end{align*}
	with $	k=|S\cup T|$, $\delta:= d(\bv_S,\bv_T):= \min_{s\in S, t\in T} \|v_s-v_t\| $ and
	\begin{align*}
	m_\lambda(\bx_S) := \EE\left[\prod_{s\in S} \xi^{(\lambda)}\left(x_s, \mathcal{P}^{(\lambda)} \cup\bigcup_{s\in S}\{x_{s}\}\right)\right]\,,
	\end{align*} 
	and with $m_\la(\bx_T)$ and $m_\la(\bx_{S\cup T})$ defined similarly. 
\end{lem}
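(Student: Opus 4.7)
The strategy is to exploit the spatial localization of Lemma~\ref{Eigenschaften}(i). For each $i\in S\cup T$ let $L_i := L(\xi^{(\l)}, x_i)$ denote the localization radius and introduce the ``good'' events
\begin{align*}
E_S := \{L_i\leq \delta/2 \text{ for all } i\in S\},\quad E_T := \{L_i\leq \delta/2 \text{ for all } i\in T\}, \quad E := E_S\cap E_T.
\end{align*}
Writing $X_A := \prod_{i\in A} \xi^{(\l)}(x_i, \cP^{(\l)}\cup\bigcup_{j\in A}\{x_j\})$ for $A\in\{S,T,S\cup T\}$, I would introduce the cylinder-localized counterparts $\tilde X_S := \prod_{s\in S}\xi^{(\l)}(x_s, (\cP^{(\l)}\cup\{x_j:j\in S\})\cap C_{d-1}(v_s, \delta/2))$ and $\tilde X_T$ analogously. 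By the definition of the localization radius one has $X_S = \tilde X_S$ on $E_S$, and on $E$ the additional fact $\|v_s-v_t\|\geq \delta$ keeps every $x_t$ outside each $s$-cylinder so that $X_{S\cup T} = \tilde X_S\,\tilde X_T$.

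The geometric separation ensures that $\bigcup_{s\in S}\BB^{d-1}(v_s,\delta/2)$ is disjoint from $\bigcup_{t\in T}\BB^{d-1}(v_t,\delta/2)$, so $\tilde X_S$ and $\tilde X_T$ depend on disjoint pieces of $\cP^{(\l)}$ and are independent; in particular $\EE[\tilde X_S\tilde X_T]=\EE[\tilde X_S]\EE[\tilde X_T]$. Decomposing
\begin{align*}
m_\l(\bx_{S\cup T}) &= \EE[\tilde X_S\,\tilde X_T] + \EE\bigl[(X_{S\cup T}-\tilde X_S\,\tilde X_T)\mathbf{1}_{E^c}\bigr],\\
m_\l(\bx_S) &= \EE[\tilde X_S] + \EE\bigl[(X_S-\tilde X_S)\mathbf{1}_{E_S^c}\bigr],
\end{align*}
and symmetrically for $m_\l(\bx_T)$, substitution into the difference $m_\l(\bx_{S\cup T})-m_\l(\bx_S)m_\l(\bx_T)$ cancels the leading $\EE[\tilde X_S]\EE[\tilde X_T]$ and leaves a sum of four error terms, each supported on one of $E^c$, $E_S^c$ or $E_T^c$. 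Each such term is estimated by Cauchy--Schwarz; the $L^2$-norm of the integrand is controlled by Proposition~\ref{Momente}(ii) (noting that the first inequality there allows cylinder-restricted factors to be bounded by the unrestricted ones), whose square root exactly delivers the prefactor $(ku[\xi])!\,((kdv[\xi])!)^2$ together with the product $\prod_{r\in S\cup T}(1+|h_r|)^{dw[\xi]}\exp(-e^{h_r\lor 0}/(c_4 k))$ appearing in the claim. For the probabilities I would apply the union bound together with the weaker localization tail~\eqref{Lokalisierung2}: provided $\delta/2\geq |h_i|$ for every $i$ (i.e.\ $\delta>2\max_r|h_r|$), this yields $\PP(E^c)\leq k c_1\exp(-\delta/(2c_2))$, whose square root produces the asserted $\exp(-c_3\delta)$; in the complementary regime the trivial bound $\PP(E^c)\leq 1$ produces the indicator $\mathbf{1}(\delta\leq 2\max_r|h_r|)$. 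The $\sqrt{k}$ factors coming from the union bound and the four error contributions are absorbed into the leading factor of $k$ in the statement.

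The main technical obstacle I anticipate is the careful bookkeeping surrounding the ``inserted'' points $\{x_1,\ldots,x_k\}$ in the Palm-type definition of $m_\l$: one has to verify that the identity $X_{S\cup T}=\tilde X_S\,\tilde X_T$ genuinely holds on $E$ (which requires that no $T$-inserted point enters any $S$-cylinder and vice versa, a consequence of $\|v_s-v_t\|\geq\delta>\delta/2$) and that the restriction of $\xi^{(\l)}$ to a cylinder is determined only by the configuration inside it. Moreover, since the tail~\eqref{Lokalisierung2} is valid only for $t\geq|h_i|$, the case split $\delta>2\max_r|h_r|$ versus $\delta\leq 2\max_r|h_r|$ is unavoidable, and it is precisely this dichotomy that produces the indicator term in the claim---a new feature compared to the unit-ball analysis of~\cite{GroteThäle} that reflects the unboundedness of Gaussian polytopes and the absence of a global localization estimate.
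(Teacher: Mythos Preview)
Your proposal is correct and follows essentially the same route as the paper's proof. The paper defines the cylinder-localized random variables $X_\delta,Y_\delta,W_\delta$ (your $\tilde X_S,\tilde X_T$ and their product) and uses the algebraic identity
\[
m_\lambda(\bx_{S\cup T}) - m_\lambda(\bx_{S}) m_\lambda(\bx_{T})
= \EE[W-W_\delta] - \EE[X_\delta]\EE[Y-Y_\delta] - \EE[Y]\EE[X-X_\delta],
\]
which is a three-term rather than your four-term decomposition (your extra cross term $\EE[(X_S-\tilde X_S)\mathbf 1_{E_S^c}]\,\EE[(X_T-\tilde X_T)\mathbf 1_{E_T^c}]$ is dominated by the others anyway). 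After that the two arguments are identical: independence of $\tilde X_S,\tilde X_T$, Cauchy--Schwarz against the event $\{L_i\ge\delta/2\text{ for some }i\}$, Proposition~\ref{Momente}(ii) for the $L^2$-factor, and the tail \eqref{Lokalisierung2} together with the case split $\delta\gtrless 2\max_r|h_r|$ to produce the $\exp(-c_3\delta)+\mathbf 1(\cdot)$ term. Your remark about the inserted points staying outside the opposite cylinders is exactly the observation that makes $W_\delta=\tilde X_S\,\tilde X_T$ hold deterministically, which the paper uses implicitly.
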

\begin{proof}
	Let us define the random variables 
	\begin{align*}
	X&:= \prod_{s\in S} \xi^{(\lambda)}\Big(x_s, \mathcal{P}^{(\lambda)} \cup\bigcup_{j\in S}\{x_{j}\}\Big)\,,\quad 
	Y:= \prod_{t\in T} \xi^{(\lambda)}\Big(x_t, \mathcal{P}^{(\lambda)} \cup\bigcup_{j\in T}\{x_{j}\}\Big)\,,\\
	W&:= \prod_{r\in S\cup T} \xi^{(\lambda)}\Big(x_r, \mathcal{P}^{(\lambda)} \cup\bigcup_{j\in S\cup T}\{x_{j}\}\Big)
	\end{align*}
	and
	\begin{align*}
	X_\delta&:= \prod_{s\in S} \xi^{(\lambda)}\Big(x_s, \Big(\mathcal{P}^{(\lambda)} \cup\bigcup_{j\in S}\{x_{j}\}\Big) \cap C_{d-1}\Big(x_s, \frac{\delta}{2}\Big)\Big)\,,\\
	Y_\delta&:= \prod_{t\in T} \xi^{(\lambda)}\Big(x_t, \Big(\mathcal{P}^{(\lambda)} \cup\bigcup_{j\in T}\{x_{j}\}\Big) \cap C_{d-1}\Big(x_t, \frac{\delta}{2}\Big)\Big)\,,\\
	\ W_\delta&:= \prod_{r\in S\cup T} \xi^{(\lambda)}\Big(x_r, \Big(\mathcal{P}^{(\lambda)} \cup\bigcup_{j\in S\cup T}\{x_{j}\}\Big) \cap C_{d-1}\Big(x_r, \frac{\delta}{2}\Big)\Big)\,.
	\end{align*}
	Similarly as in the proof of Lemma 5.4 in \cite{GroteThäle} we have that
	\begin{align}\label{eq:ZerlegungXYW}
	&m_\lambda(\bx_{S\cup T}) - m_\lambda(\bx_{S}) m_\lambda(\bx_{T})
	= \EE[W-W_\delta] - \EE[X_\delta]\EE[Y-Y_\delta] - \EE[Y]\EE[X-X_\delta]\,.
	\end{align}
	Now, observe that Proposition \ref{Momente} implies the estimates
	\begin{align*}
	\EE|X_\delta| 
	&\le c_1\, c_2^k\, (|S|u[\xi])!\, ((|S|dv[\xi])!)^{2}\, \prod_{s\in S} \left[(1+|h_s|)^{dw[\xi]}\, \exp\left(-\frac{e^{h_s\lor 0}}{c_3\, k}\right)\right]
	\intertext{and}
	\EE|Y| 
	&\le c_1\, c_2^k\, (|T|u[\xi])!\, ((|T|dv[\xi])!)^{2}\, \prod_{t\in T} \left[(1+|h_t|)^{dw[\xi]}\, \exp\left(-\frac{e^{h_t\lor 0}}{c_3\, k}\right)\right]\,,
	\end{align*}
	where we also used that $|S|, |T| \le k$. 
	Next, let $N_S$ be the event that at least one $x_s$, $s\in S$, has a radius of localization bigger than or equal to $\delta /2$. On the complement $N_S^c$ of $N_S$ we have $X_\delta = X$. 
	Thus, the Cauchy-Schwarz inequality, the fact that $\sqrt{a+b}\leq\sqrt{a}+\sqrt{b}$ for all $a,b\ge0$, Proposition \ref{Momente} and \eqref{Lokalisierung2} show that
	\begin{align*}
	&\EE|X-X_\delta|	= \EE|X_\delta\,  \text{\bf 1}(N_S)| \leq (\EE[X_\delta^2])^\frac{1}{2} (\PP(N_S))^\frac{1}{2}\\
	&\leq \left(  c_1\, c_2^k\, ((|S|u[\xi])!)^2\, ((|S|dv[\xi])!)^{4}\, \prod_{s\in S} \left[(1+|h_s|)^{2dw[\xi]}\, \exp\left(-\frac{e^{h_s\lor 0}}{c_3\, |S|}\right)\right]\right)^{1/2}\\
	& \qquad\qquad \times \left( c_4\ |S|\, \exp\left(-c_5\, \delta \right) + \text{\bf 1}(\delta \le 2 \max\limits_{s\in S} \{\left|h_s\right|\}) \right)^\frac{1}{2}\\
	&\leq c_1\, c_2^k\,  (|S|u[\xi])!\,  ((|S|dv[\xi])!)^{2}\, k\, \left(\exp\left(-c_3\, \delta \right) + \text{\bf 1}(\delta \le 2 \max\limits_{r\in S\cup T} \{\left|h_r\right|\})\right) \\
	&\qquad\qquad\times \prod_{s\in S} \left[(1+|h_s|)^{dw[\xi]}\,  \exp\left(-\frac{e^{h_s\lor 0}}{c_4\, k}\right)\right] \,,
	\end{align*}  
	since
	$$
	\text{\bf 1}(\delta \le 2 \max\limits_{s\in S} \{\left|h_s\right|\}) \le \text{\bf 1}(\delta \le 2 \max\limits_{r\in S\cup T} \{\left|h_r\right|\})
	$$
	and $|S|\le k$. From \eqref{UngleichungFakultät3} we see that
	$$
	(|S|dv[\xi])!\,  (|T|dv[\xi])! \le (kdv[\xi])!\qquad\text{and}\qquad(|T|u[\xi])!\, (|S|u[\xi])! \le (ku[\xi])!\,,
	$$
	which leads to
	\begin{align*}
	\EE|Y|\, \EE|X-X_\delta|
	&\le  c_1\, c_2^k\, (ku[\xi])!\, ((kdv[\xi])!)^{2}\, k\, \left(\exp\left(-c_3\, \delta \right) + \text{\bf 1}(\delta \le 2 \max\limits_{r\in S\cup T} \{\left|h_r\right|\})\right) \\
	&\qquad\times \prod_{r\in S\cup T} \left[(1+|h_r|)^{dw[\xi]}\, \exp\left(-\frac{e^{h_r\lor 0}}{c_4\, k}\right)\right] 
	\end{align*}
	for sufficiently large $\lambda$. Similar estimates hold for $\EE|X_\delta|\, \EE|Y-Y_\delta|$ and $\EE|W-W_\delta|$ and this completes the proof in view of \eqref{eq:ZerlegungXYW}.
\end{proof}

\section{Proof of the cumulant bound}\label{sec:CumulantProof}

This section contains the most technical part of the proof of our main results, that is, the proof of Theorem \ref{cumulantestimate}. Before going into the details, let us briefly describe the main steps. The starting point is the cluster measure representation of the cumulant measures presented in Lemma \ref{lem:CumulantExpressionCluster} and in a first step we will deal with the diagonal term (Lemma \ref{Diagonal}). Using \eqref{cumulantmeasure}, we see that for this we just need to control the moments of $\xi$. We have prepared such a bound in the first part of Proposition \ref{Momente}. 

In a second and considerably more involved step of the proof we deal with the off-diagonal term. The cluster measure representation of the cumulant measures presented in Lemma \ref{lem:CumulantExpressionCluster}, the description of spatial correlations from the clustering Lemma \ref{kluster} as well as the second part of our moment estimates in Proposition \ref{Momente} allow us to derive a first integral representation for the individual terms (Lemma \ref{lem:FirstBoundOffDiag}). These integrals are then estimated next, starting with the inner integral (Lemma \ref{lem:IntDV}). The fact that the geometric functionals $\xi\in\Xi$ are not globally localizing implies that this estimate has two terms that need to be investigated further (Lemma \ref{I} and Lemma \ref{II}). Combining all bounds finally results in a bound for the off-diagonal term (Lemma \ref{off-diagonal}).

\medskip

As described above, we start by dealing with the diagonal term. From now on we fix $k\in\{3,4,\ldots\}$.

\begin{lem}\label{Diagonal}
Let $\xi\in \Xi$ and $f\in \mathcal{B}(\RR^d)$. Then,
\begin{align*}
\Bigg| \int\limits_\Delta f_{R_\la}^k\, \dint c_\lambda^k \Bigg| \le c_1\, c_2^k\, \|f\|_\infty^k\, R_\lambda^{d-1}\,  (ku[\xi])!\,((kdv[\xi])!)^{3}
\end{align*}
for sufficiently large $\lambda$.
\end{lem}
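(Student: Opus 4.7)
The plan is to observe that only the single-block ($p=1$) term in the cumulant expansion \eqref{cumulantmeasure} contributes to the diagonal integral, and then to bound the resulting one-dimensional integral by means of Proposition \ref{Momente}(i).

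Indeed, each moment measure $M_\l^{|L_i|}$ has a singular part on the full diagonal of $(\RR^d)^{|L_i|}$ coming from the single-block term $\bar{\dint}[\l\phi]$ in the expansion \eqref{singulardifferential2} of $\tilde{\dint}[\l\phi]$. The product $M_\l^{|L_1|}\otimes\cdots\otimes M_\l^{|L_p|}$, restricted to these diagonals, is absolutely continuous on $\prod_i\Delta_{|L_i|}\cong(\RR^d)^p$. Since the full diagonal $\Delta$ is a $d$-dimensional submanifold of this $pd$-dimensional space, it carries zero Lebesgue measure as soon as $p\ge 2$. Only $p=1$ (with $L_1=\setk$) therefore contributes, and \eqref{singulardifferential} combined with the definition of $m_\l$ gives
\begin{align*}
\int_\Delta f_{R_\l}^k\,\dint c_\l^k = \int_\Delta f_{R_\l}^k\,\dint M_\l^k = \int_{\RR^d} f_{R_\l}(y)^k\,\EE\big[\xi^{(\l)}(T_\l(y),\cP^{(\l)}\cup\{T_\l(y)\})^k\big]\,\l\phi(y)\,\dint y\,.
\end{align*}

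Changing variables $w=(v,h)=T_\l(y)$, the factor $\l\phi(y)\,\dint y$ becomes the density \eqref{IntensityP^lambda} on $W_\l=R_\l\BB_{d-1}(0,\pi)\times(-\infty,R_\l^2]$, which is bounded above by $c\,e^h(1+|h|)^{d-1}$. Estimating $f_{R_\l}(y)^k$ by $\|f\|_\infty^k$ and applying Proposition \ref{Momente}(i) with $p=k$ to the inner expectation yields an integrand on $W_\l$ bounded by
\begin{align*}
c_1\,c_2^k\,\|f\|_\infty^k\,(ku[\xi])!\,((kdv[\xi])!)^2\,(1+|h|)^{B}\,e^h\,\exp\!\Big(-\frac{e^{h\vee 0}}{c_3}\Big)
\end{align*}
with $B:=k(d-1)v[\xi]+2d-1$, a quantity independent of $v$. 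The $v$-integration over $R_\l\BB_{d-1}(0,\pi)$ contributes the volume factor $\kappa_{d-1}(\pi R_\l)^{d-1}$. The $h$-integration is split at $h=0$: on $(-\infty,0)$ the substitution $u=-h$ and the classical bound $\int_0^\infty(1+u)^B e^{-u}\,\dint u\le e\,B!$ apply, while on $[0,R_\l^2]$ the inequality $\exp(-e^h/c_3)\le c_3^{B+2}(B+2)!\,e^{-(B+2)h}$ combined with $(1+h)^B e^{-(B+1)h}\le e^{-h}$ (which follows from $1+h\le e^h$) yields a bound of order $c^{B+2}(B+2)!$. Both halves reduce via \eqref{UngleichungFakultät2} to $c_1\,c_2^k\,(kdv[\xi])!$, producing the extra factorial that raises the overall exponent of $(kdv[\xi])!$ from two to three, in agreement with the claimed bound.

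The principal (moderate) obstacle is the careful bookkeeping of factorials in the $h$-integration, in particular verifying that $(B+2)!$ with $B$ linear in $k$ is absorbed into a single factor $(kdv[\xi])!$ via \eqref{UngleichungFakultät2}. The negative half-line $h\in(-\infty,0)$ is the more delicate regime, since only the slow $e^h$ decay from the Gaussian density is available there, in the absence of any localization property of $\xi^{(\l)}$ at negative heights.
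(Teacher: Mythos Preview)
Your proof is correct and follows essentially the same route as the paper: reduce the diagonal integral to $\int_{\RR^d}\EE|\xi(x,\cP_\l)|^k\,\l\phi(x)\,\dint x$, invoke the moment bound from Proposition \ref{Momente}(i), and control the resulting one-dimensional height integral by splitting at $h=0$. The only cosmetic difference is that the paper passes to spherical coordinates $(u,h)\in\SS^{d-1}\times(-\infty,R_\l^2]$ via rotation invariance (the $\SS^{d-1}$-integration then produces $d\kappa_d\,R_\l^{d-1}$), whereas you use the full transformation $T_\l$ and integrate $v$ over $R_\l\BB_{d-1}(0,\pi)$; and on the half-line $h\ge 0$ the paper applies the cruder inequality $e^{-x}\le 2/x^2$ rather than your $e^{-x}\le (B+2)!/x^{B+2}$, but both lead to the same factorial $(kdv[\xi])!$ after \eqref{UngleichungFakultät2}.
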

\begin{proof}
As in the proof of Lemma 5.6 in \cite{GroteThäle} we obtain from the definition of the cumulant measures and the fact that we only integrate over the diagonal $\Delta$ that
\begin{align*}
\Bigg|\int\limits_\Delta f_{R_\la}^k\; \dint c_\lambda^k\Bigg| \leq \|f\|_\infty^k \int\limits_{\mathbb{R}^d} \EE \big|\xi( x, \mathcal{P}_\la)\big|^k\,\lambda\, \phi(x)\, \dint x\,.
\end{align*}
By rotation invariance of the point process one has that
\begin{align*}
\EE \big|\xi( x, \mathcal{P}_\la)\big|^k = \EE\big| \xi^{(\lambda)}( (0,h), \mathcal{P}^{(\lambda)})\big|^k\,,
\end{align*}
where $h$ is defined by $\|x\| = R_\lambda (1 - h/R_\lambda^2)$ in view of the scaling transformation $T_\la$. Writing $u=x/\|x\|$ we can rewrite $\dint x$ as
\begin{align*}
\dint x = \left[R_\la\left(1-\frac{h}{R_\la^2}\right)\right]^{d-1} \frac{1}{R_\la}\, \dint h\, \dint \sigma_{d-1}(u)\,.
\end{align*}
So, the above integral is bounded by 
\begin{align*}
\|f\|_\infty^k\, R_\la^{d-1} \int\limits_{\SS^{d-1}} \int\limits_{-\infty}^{R_\la^2}\, \EE\left| \xi^{(\lambda)}\left( (0,h), \mathcal{P}^{(\lambda)} \right)\right|^k \phi_\la(u,h)  \left(1-\frac{h}{R_\la^2}\right)^{d-1} \dint h \dint \sigma_{d-1}(u)
\end{align*}
with 
\begin{align}\label{eq:DefPhiLambda2Variablen}
\phi_\la(u,h) := \frac{\la}{R_\la}\, \phi\left(u\, R_\la\left(1-\frac{h}{R_\la^2}\right)\right) = \frac{\sqrt{2 \log \la}}{R_\la}\, \exp \left(h-\frac{h^2}{2R_\la^2}\right)\,,
\end{align}
see Remark \ref{rem:WalVonRl}.
The expression for $\phi_\la(u,h)$ is obviously bounded from above by a constant times $e^h$ for all $h\in \RR$, $u\in \SS^{d-1}$ and sufficiently large $\la$.  
Furthermore, from Proposition \ref{Momente} we deduce that 
\begin{align*}
&\EE \big| \xi^{(\lambda)}( (0,h), \mathcal{P}^{(\lambda)} ) \big|^k \\
&\qquad\le c_1\, c_2^k\, (ku[\xi])!\, ((kdv[\xi])!)^{2}\, (1+|h|)^{k(d-1)v[\xi] + d}\, \exp\left(-\frac{e^{h\lor 0}}{c_3}\right)\,.
\end{align*}
Thus, the integral we started with is bounded by
\begin{align*}
& c_1\, c_2^k\, \|f\|_\infty^k\, R_\la^{d-1}\,  (ku[\xi])!\, ((kdv[\xi])!)^{2}\,\int\limits_{\SS^{d-1}} \int\limits_{-\infty}^{R_\la^2}  (1+|h|)^{k(d-1)v[\xi] + d} \exp\left(-\frac{e^{h\lor 0}}{c_3}\right)\\
& \qquad\qquad\times e^h \left(1-\frac{h}{R_\la^2}\right)^{d-1} \dint h \dint \sigma_{d-1}(u)\,.
\end{align*}
We decompose the inner integral into the two parts
\begin{align*}
&  \int\limits_{-\infty}^{0}  (1+|h|)^{k(d-1)v[\xi] + d} \exp\left(-\frac{e^{h\lor 0}}{c_1}\right) e^h \left(1-\frac{h}{R_\la^2}\right)^{d-1} \dint h\\ 
&\qquad \qquad +\int\limits_{0}^{R_\la^2}  (1+|h|)^{k(d-1)v[\xi] + d} \exp\left(-\frac{e^{h\lor 0}}{c_2}\right) e^h \left(1-\frac{h}{R_\la^2}\right)^{d-1} \dint h =: T_3 + T_4\,,
\end{align*}
which will be treated separately. For the first one we have that $\exp\left(-\frac{e^{h\lor 0}}{c}\right) \le 1$ since $h<0$. Using that $R_\la^2 \geq  1$ for sufficiently large $\la$, the inequality $k(d-1)v[\xi] +d+(d-1) \le kdv[\xi] + 2d$ and the substitution $h+1=s$ we obtain
\begin{align*}
T_3 &\le \int\limits_{-\infty}^{0}  (1+|h|)^{k(d-1)v[\xi] + d}\, e^h \left(1-\frac{h}{R_\la^2}\right)^{d-1} \dint h = \int\limits_{0}^{\infty}  (1+h)^{k(d-1)v[\xi] + d}\,  e^{-h} \left(1+\frac{h}{R_\la^2}\right)^{d-1} \dint h\\
&\le \int\limits_{0}^{\infty}  (1+h)^{kdv[\xi] + 2d}\,  e^{-h} \dint h
\le  c_1 \int\limits_{0}^{\infty}  s^{kdv[\xi] + 2d}\,  e^{-s} \dint s = c_2\, (kdv[\xi] + 2d)! \le c_3\, c_4^k\, (kdv[\xi])! \,,
\end{align*}
where we also used \eqref{UngleichungFakultät2} in the last step.

To bound the term $T_4$ we need the inequality 
\begin{align}
\label{InequalityExponential}
\exp\left(-x\right) \le \frac{2}{x^2}\,,\qquad x>0\,.
\end{align} 
Furthermore, we observe that $\left(1- h/R_\la^2\right)^{d-1} \le 1$ for all $h\in \left[0,R_\la^2\right]$. With the help of \eqref{InequalityExponential} and \eqref{UngleichungFakultät2} we get the estimate
\begin{align*}
T_4 &= \int\limits_{0}^{R_\la^2}  (1+|h|)^{k(d-1)v[\xi] + d}\, \exp\left(-\frac{e^{h\lor 0}}{c}\right) e^h \left(1-\frac{h}{R_\la^2}\right)^{d-1} \dint h\\
&\le \int\limits_{0}^{R_\la^2}  (1+h)^{kdv[\xi] + d}\, \frac{2}{\left(e^h/c\right)^2}\, e^h\,  \dint h\\
&\le c_1 \int\limits_{0}^{\infty}  (1+h)^{kdv[\xi] + d}\, e^{-h}\, \dint h
\le c_2\, (kdv[\xi] + d)!
\le c_3\, c_4^k\, (kdv[\xi])!\,.   
\end{align*}
Putting together the bounds for $T_3$ and $T_4$ and using that $\int\limits_{\SS^{d-1}} \dint \sigma_{d-1}(u) = d\kappa_d$ implies 
\begin{align*}
\Bigg| \int\limits_\Delta f_{R_\la}^k\, \dint c_\lambda^k \Bigg| &\le c_1\, c_2^k\, \|f\|_\infty^k\, R_\la^{d-1}\,   (ku[\xi])!\, (kdv[\xi])!\,((kdv[\xi])!)^{2}\, \int\limits_{\SS^{d-1}} \dint \sigma_{d-1}(u)\\
&\le c_1\, c_2^k\, \|f\|_\infty^k\, R_\la^{d-1}\,  (ku[\xi])!\,((kdv[\xi])!)^{3}\,,
\end{align*}
which proves the claim.
\end{proof}

\noindent An upper bound for the off-diagonal term in \eqref{ZerlegungKumulanten} is derived along the following four Lemmas.  

\begin{lem}\label{lem:FirstBoundOffDiag}
	Let $\xi\in \Xi$ and $f\in\cB(\RR^d)$. Then, we have that, for sufficiently large $\la$,
	\begin{align*}
	&\Bigg|\sum_{S,T\, \preceq\setk}\,\int\limits_{\sigma(\{S,T\})} f_{R_\la}^k\, \dint c_\la^k\, \Bigg|\\
	 &\qquad \leq c_1\, c_2^k\, \|f\|_\infty^k\,R_\la^{d-1}\,  k\, k!\,(ku[\xi])!\,  ((kdv[\xi])!)^{2}\\
	 &  \qquad \quad   \times \sum_{L_1,\ldots,L_p\, \preceq\setk}\,  \int\limits_{\SSd} \int\limits_{-\infty}^{R_\la^2} \ldots \int\limits_{-\infty}^{R_\la^2} \int\limits_{(\RR^{d-1})^{p-1}} \left(\exp(-c_3\, \delta(0,\text{\bf v})) + \text{\bf 1}(\delta(0,\text{\bf v}) \le 2 \max\limits_{i=1,\ldots,p} \{\left|h_i\right|\})\right)\\
	& \qquad \quad  \times \prod_{i=1}^p \left[(1+\left|h_i\right|)^{dw[\xi]} \exp\left(-\frac{e^{h_i\lor 0}}{c_4\, k}\right) e^{h_i} \left(1-\frac{h_i}{R_\la^2}\right)^{d-1}\right] \dint \text{\bf v} \dint h_1 \ldots \dint h_p \dint \sigma_{d-1}(u)\,,
	\end{align*}
	where ${\bf v}=(v_2,\ldots,v_p)$.
\end{lem}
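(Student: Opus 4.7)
\textbf{Proof plan for Lemma \ref{lem:FirstBoundOffDiag}.}
The starting point is the cluster expansion \eqref{ZerlegungKumulanten} provided by Lemma \ref{lem:CumulantExpressionCluster}: each off-diagonal contribution is an integral against a product $U_\lambda^{S',T'}\otimes M_\lambda^{|K_1|}\otimes\cdots\otimes M_\lambda^{|K_s|}$ weighted by coefficients that, after summing in absolute value, are controlled by $2^{k}k!$ thanks to \eqref{AbschätzungSummanden}. Since every moment measure $M_\lambda^{|L|}$ has density $m_\lambda$ with respect to $\tilde{\dint}[\lambda\phi]$, and the cluster measure $U_\lambda^{S',T'}$ has density $m_\lambda(\bx_{S'\cup T'})-m_\lambda(\bx_{S'})\,m_\lambda(\bx_{T'})$, the integrand decomposes as
$$
\big|m_\lambda(\bx_{S'\cup T'})-m_\lambda(\bx_{S'})\,m_\lambda(\bx_{T'})\big|\;\prod_{j=1}^{s} m_\lambda(\bx_{K_j}),
$$
pulled back by $f_{R_\lambda}^k$ and integrated with respect to the corresponding singular differentials in $\lambda\phi$.

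The next step is to apply the clustering Lemma \ref{kluster} to the difference $m_\lambda(\bx_{S'\cup T'})-m_\lambda(\bx_{S'})m_\lambda(\bx_{T'})$, which extracts the mixing factor $\exp(-c\,\delta)+\mathbf{1}(\delta\le 2\max|h_r|)$ together with a prefactor $c_1c_2^k k\,(ku[\xi])!\,((kdv[\xi])!)^2$ and a product of single-point weights $(1+|h_r|)^{dw[\xi]}\exp(-e^{h_r\lor 0}/(c k))$ indexed by $r\in S'\cup T'$. Each remaining factor $m_\lambda(\bx_{K_j})$ is bounded via Hölder's inequality together with Proposition \ref{Momente}(i), which yields the analogous product of single-point weights now indexed by $r\in K_j$, multiplied by $(|K_j|u[\xi])!\,((|K_j|dv[\xi])!)^2$ (up to an overall $c_1 c_2^{|K_j|}$). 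Using the elementary inequality \eqref{UngleichungFakultät3} to collapse $\prod_j (|K_j|u[\xi])!\le (ku[\xi])!$ and similarly for the $((\cdot dv[\xi])!)^{2}$ terms, all these combinatorial factors consolidate into the single prefactor $c_1c_2^k\, k\,(ku[\xi])!\,((kdv[\xi])!)^2$ appearing in the asserted bound, while the $2^k k!$ from \eqref{AbschätzungSummanden} supplies the missing $k!$.

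The final and most geometric step is to transport the integral from $(\RR^d)^{k}$ to the rescaled domain using the scaling transformation $T_\lambda$. The singular differential $\tilde{\dint}[\lambda\phi]$ contracts the $k$-fold integral to a $p$-fold one, where $p$ is the number of blocks of a partition $L_1,\ldots,L_p\preceq\setk$, and produces the sum $\sum_{L_1,\ldots,L_p\preceq\setk}$ in the final bound. For each of the $p$ resulting spatial variables I would use rotational invariance of $\lambda\gamma_d$ to fix the direction of the first point (with the angular integral producing $\dint\sigma_{d-1}(u)$ and setting $v_1=0$, so that $\delta$ becomes $\delta(0,\bv)$) and the scaling transformation to parametrize the remaining $p-1$ points by $(v_i,h_i)\in\RR^{d-1}\times(-\infty,R_\lambda^2]$. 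The resulting Jacobian is exactly $R_\lambda^{d-1}$ for the first point together with $(1-h_i/R_\lambda^2)^{d-1}$ for each of the others, and the identity \eqref{WahlvonRlambda} of Remark \ref{rem:WalVonRl} turns every factor $\lambda\phi$ into a constant times $e^{h_i}$. The uniform bound $\|f_{R_\lambda}^k\|_\infty\le\|f\|_\infty^k$ finally pulls the $\|f\|_\infty^k$ outside, producing the stated form.

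The main obstacle will be the combinatorial bookkeeping of three superposed partitions: the coarse $\{S,T\}$ organising the clustering region, the refinement $\{S',T',K_1,\ldots,K_s\}$ coming from the cluster expansion, and the partition $\{L_1,\ldots,L_p\}$ arising from $\tilde{\dint}[\lambda\phi]$. Ensuring that the single-point weights $(1+|h_r|)^{dw[\xi]}\exp(-e^{h_r\lor 0}/(c k))$ coming from different blocks consolidate into one factor per surviving spatial variable (i.e.\ per block of the $\tilde{\dint}$-partition), and tracking how the indicator $\mathbf{1}(\delta\le 2\max|h_r|)$ produced by the absence of global localization in Lemma \ref{kluster} descends to $\mathbf{1}(\delta\le 2\max_i|h_i|)$ in the final integrand, are the two points where extra care is required.
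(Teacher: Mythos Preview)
Your proposal is correct and follows essentially the same route as the paper: apply the cluster expansion of Lemma~\ref{lem:CumulantExpressionCluster}, bound the difference $m_\lambda(\bx_{S'\cup T'})-m_\lambda(\bx_{S'})m_\lambda(\bx_{T'})$ by Lemma~\ref{kluster}, bound each $m_\lambda(\bx_{K_j})$ via Proposition~\ref{Momente}, consolidate the factorials using \eqref{UngleichungFakultät3} and the coefficient bound \eqref{AbschätzungSummanden}, then pass to the rescaled coordinates via $T_\lambda$ using rotational invariance to set $v_1=0$ and \eqref{WahlvonRlambda} to convert $\lambda\phi$ into $c\,e^{h_i}$. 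The paper also uses the monotonicity $d(\bv_{S'},\bv_{T'})\ge d(\bv_S,\bv_T)=\delta(\bx)$ on $\delta(\{S,T\})$ to replace the mixing factor coming from $S',T'$ by one depending only on $\delta(\bx)$, which is the concrete mechanism behind your remark that the indicator ``descends'' to $\mathbf{1}(\delta\le 2\max_i|h_i|)$; you have identified exactly the right points of care.
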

\begin{proof}
The definition of the cluster measures and the description of the densities of the moment measures as explained in Section \ref{sec:Preliminaries} imply that for a fixed and non-trivial partition $\{S,T\}$ of $\setk$ and for fixed $S',T',K_1,\ldots,K_s$ as in Lemma \ref{lem:CumulantExpressionCluster},
 \begin{align}
 \begin{split}
 \label{Integrand}
 &\int\limits_{\delta(\{S,T\})} f_{R_\la}^k\ \dint (U_\lambda^{S^{'},T^{'}}\otimes M_\lambda^{|K_1|}\otimes\ldots \otimes M_\lambda^{|K_s|})\\
 &= \int\limits_{\delta(\{S,T\})} f_{R_\la}^k(\text{\bf x})\ \left(m_\lambda(\bx_{S^{'}\cup T^{'}}) - m_\lambda(\bx_{S^{'}})m_\lambda(\bx_{T^{'}})\right) m_\lambda(\bx_{K_1})\ldots m_\lambda(\bx_{K_s})\ \tilde{\dint}[\lambda \phi](\bf x)\,.
 \end{split}
 \end{align} 
 In what follows we will use the parametrization $T_\la(x_i) = (v_i,h_i)$ for all $i\in\setk$. Using this notation, Lemma \ref{kluster} shows that
 \begin{align*}
 &|m_\lambda(\bx_{S^{'}\cup T^{'}}) - m_\lambda(\bx_{S^{'}})m_\lambda(\bx_{T^{'}})|\\ 
 &\qquad \le c_1\, c_2^k\, k\, (|S^{'}\cup T^{'}|u[\xi])!\, ((|S^{'}\cup T^{'}|dv[\xi])!)^{2} \\
 &\qquad\qquad\times \left(\exp(-c_3\, d(\bv_{S^{'}}, \bv_{T^{'}})) + \text{\bf 1}(d(\bv_{S^{'}}, \bv_{T^{'}}) \le 2 \max\limits_{r\in S^{'}\cup T^{'}} \{\left|h_r\right|\})\right)\\
 &\qquad \qquad \times  \prod_{r\in S^{'}\cup T^{'}} \left[(1+\left|h_r\right|)^{dw[\xi]} \exp\left(-\frac{e^{h_r\lor 0}}{c_4\, k}\right)\right]\,.
 \end{align*}
 On the other hand, Proposition \ref{Momente} delivers for all $i\in \{1,\ldots,s\}$ the bound
 \begin{align*}
 |m_\lambda(\bx_{K_i})|
 & \le  c_1\, c_2^k\, (|K_i|u[\xi])!\, ((|K_i|dv[\xi])!)^{2}\,\prod_{i\in K_i} \left[(1+\left|h_i\right|)^{dw[\xi]} \exp\left(-\frac{e^{h_i\lor 0}}{c_3\, k}\right)\right]\,,
 \end{align*}
 since $|K_i|\le k$. Now, we notice that $d(\bv_{S'},\bv_{T'})\geq d(\bv_S,\bv_T)$. Together with the observation that $\max\limits_{r\in S^{'}\cup T^{'}} \{\left|h_r\right|\} \le \max\limits_{i=1,\ldots,k} \{\left|h_i\right|\}$ this yields that
 \begin{align*}
 &\exp(-c\, d(\bv_{S^{'}}, \bv_{T^{'}})) + \text{\bf 1}(d(\bv_{S^{'}}, \bv_{T^{'}})\le 2 \max\limits_{r\in S^{'}\cup T^{'}} \{\left|h_r\right|\})\\
 & \qquad \le \exp(-c\, d(\bv_{S}, \bv_{T})) + \text{\bf 1}(d(\bv_{S}, \bv_{T}) \le 2 \max\limits_{i=1,\ldots,k} \{\left|h_i\right|\}) \,,
 \end{align*}
which in turn implies that
 \begin{align*}
 &|\left(m_\lambda(\bx_{S^{'}\cup T^{'}}) - m_\lambda(\bx_{S^{'}})m_\lambda(\bx_{T^{'}})\right) m_\lambda(\bx_{K_1})\cdots m_\lambda(\bx_{K_s})|\\
 & \le c_1\,c_2^k\, k\, (|S^{'}\cup T^{'}|u[\xi])!\, (|K_1|u[\xi])! \cdots (|K_s|u[\xi])!\, \big[(|S^{'}\cup T^{'}|dv[\xi])!\\
 & \qquad \times (|K_1|dv[\xi])! \cdots (|K_s|dv[\xi])!\big]^{2}\,\left(\exp(-c_3\, d(\bv_{S}, \bv_{T})) + \text{\bf 1}(d(\bv_{S}, \bv_{T}) \le 2 \max\limits_{i=1,\ldots,k} \{\left|h_i\right|\})\right)\\
 &\qquad\times \prod_{i = 1}^{k} \left[(1+\left|h_i\right|)^{dw[\xi]} \exp\left(-\frac{e^{h_i\lor 0}}{c_4\, k}\right)\right]\,.
 \end{align*}
 Now, we use the estimate \eqref{UngleichungFakultät3} to see that
 \begin{align*}
 (|S^{'}\cup T^{'}|dv[\xi])!\, (|K_1|dv[\xi])! \cdots (|K_s|dv[\xi])! &\le (kdv[\xi])! \qquad \text{and}\\
 (|S^{'}\cup T^{'}|u[\xi])!\, (|K_1|u[\xi])! \cdots (|K_s|u[\xi])! &\le (ku[\xi])!\,.
 \end{align*}
Thus,
 \begin{align*}
 &|\left(m_\lambda(\bx_{S^{'}\cup T^{'}}) - m_\lambda(\bx_{S^{'}})m_\lambda(\bx_{T^{'}})\right) m_\lambda(\bx_{K_1})\cdots m_\lambda(\bx_{K_s})|\\
 &\qquad  \le c_1\, c_2^k\, k\, (ku[\xi])!\, ((kdv[\xi])!)^{2}\, \left(\exp(-c_3\, d(\bv_{S}, \bv_{T})) + \text{\bf 1}(d(\bv_{S}, \bv_{T}) \le 2 \max\limits_{i=1,\ldots,k} \{\left|h_i\right|\})\right)\\
 &\qquad \qquad \times \prod_{i = 1}^{k} \left[(1+\left|h_i\right|)^{dw[\xi]} \exp\left(-\frac{e^{h_i\lor 0}}{c_4\, k}\right)\right]\,.
 \end{align*}
 Recalling Lemma \ref{lem:CumulantExpressionCluster} and especially \eqref{AbschätzungSummanden}, summing \eqref{Integrand} over all $S',T',K_1,\ldots,K_s\preceq\setk$ and observing that $d(\bv_{S}, \bv_{T}) = \delta(\bf x)$ whenever we are integrating over $\delta(\{S,T\})$, we get
 \begin{align*}
  \Bigg|\,\int\limits_{\delta(\{S,T\})} f_{R_\la}^k\, \dint c_\lambda^k \Bigg|  &\le c_1\,c_2^k\, \|f\|_\infty^k\, k\, k!\,(ku[\xi])!\,  ((kdv[\xi])!)^{2} \\
 &\qquad\times\int\limits_{\delta(\{S,T\})} \left(\exp(-c_3\, \delta(\text{\bf x})) + \text{\bf 1}(\delta(\text{\bf x}) \le 2 \max\limits_{i=1,\ldots,k} \{\left|h_i\right|\})\right) \\
 &\qquad\times\prod_{i = 1}^{k} \left[(1+\left|h_i\right|)^{dw[\xi]} \exp\left(-\frac{e^{h_i\lor 0}}{c_4\, k}\right)\right] \tilde{\dint}[\lambda \phi]({\bf x})
 \end{align*} 
 and finally 
  \begin{align*}
  \Bigg|\, \sum_{S,T\, \preceq\setk}\, \int\limits_{\delta(\{S,T\})} f_{R_\la}^k\ \dint c_\lambda^k \Bigg|  &\le c_1\,c_2^k\, \|f\|_\infty^k\,  k\, k!\,(ku[\xi])!\,  ((kdv[\xi])!)^{2}\\
  &\qquad\times \int\limits_{(\RR^d)^k} \left(\exp(-c_3\, \delta(\text{\bf x})) + \text{\bf 1}(\delta(\text{\bf x}) \le 2 \max\limits_{i=1,\ldots,k} \{\left|h_i\right|\})\right)\\
  &\qquad  \times \prod_{i = 1}^{k} \left[(1+\left|h_i\right|)^{dw[\xi]} \exp\left(-\frac{e^{h_i\lor 0}}{c_4\, k}\right)\right] \tilde{\dint}[\lambda \phi]({\bf x})\,.
  \end{align*}
  
  In a next step, a bound for the integral over $(\RR^d)^k$ is derived and for this we can assume without loss of generality that, after a suitable rotation of the underlying point process, the point $x_1$ is mapped to $(0,h_1) \in W_\la$ under the scaling transformation $T_\la$, where the height coordinate $h_1$ is determined by $\|x_1\| = R_\la(1 - h_1/R_\la^2)$ as in the previous lemma. Together with the definition of the singular differential $\tilde{\dint}[\lambda \phi]({\bf x})$ we conclude that
  \begin{align*}
  &\int\limits_{(\RR^d)^k} \left(\exp(-c_1\, \delta(\text{\bf x})) + \text{\bf 1}(\delta(\text{\bf x}) \le 2 \max\limits_{i=1,\ldots,k} \{\left|h_i\right|\})\right) \prod_{i = 1}^{k} \left[(1+\left|h_i\right|)^{dw[\xi]} \exp\left(-\frac{e^{h_i\lor 0}}{c_2\, k}\right)\right] \tilde{\dint}[\lambda \phi](\text{\bf x})\\
  &= \sum_{L_1,\ldots,L_p\, \preceq\setk}\, \int\limits_{(\RR^d)^k} \left(\exp(-c_1\, \delta(\text{\bf x})) + \text{\bf 1}(\delta(\text{\bf x}) \le 2 \max\limits_{i=1,\ldots,k} \{\left|h_i\right|\})\right)\\
  &\qquad \qquad \times \prod_{i = 1}^{k} \left[(1+\left|h_i\right|)^{dw[\xi]} \exp\left(-\frac{e^{h_i\lor 0}}{c_2\, k}\right)\right] \bar{\dint}[\lambda\phi](\bx_{L_1})\ldots \bar{\dint}[\lambda\phi](\bx_{L_p})\\
  &= \sum_{L_1,\ldots,L_p\, \preceq\setk}\, \la^p \int\limits_{(\RR^d)^p} \left(\exp(-c_1\, \delta(0,v_2,\ldots,v_p)) + \text{\bf 1}(\delta(0,v_2,\ldots,v_p) \le 2 \max\limits_{i=1,\ldots,p} \{\left|h_i\right|\})\right)\\
  &\qquad \qquad \times \prod_{i = 1}^{p} \left[(1+\left|h_i\right|)^{dw[\xi]} \exp\left(-\frac{e^{h_i\lor 0}}{c_2\, k}\right)\right] \phi(x_1) \ldots \phi(x_p)\, \dint x_1 \ldots \dint x_p\,.
  \end{align*}  
  Now, we reparameterize as in the proof of Lemma \ref{Diagonal} and notice that the differential elements transform into
  \begin{align*}
  \la\, \phi(x_1)\, \dint x_1 = R_\la^{d-1}\, \phi_\la(u,h_1)\, \left(1-\frac{h_1}{R_\la^2}\right)^{d-1} \dint h_1 \dint \sigma_{d-1}(u)
  \end{align*}
 and  
  \begin{align*}
  \la\, \phi(x_i)\, \dint x_i &=  \frac{\sin^{d-2} (R_\la^{-1}\|v_i\|)}{\|R_\la^{-1}v_i\|^{d-2}}\, \frac{\sqrt{2\log \la}}{R_\la}\, \exp\left(h_i-\frac{h_i^2}{2R_\la^2}\right) \left(1-\frac{h_i}{R_\la^2}\right)^{d-1} \dint v_i \dint h_i\\
  &= \frac{\sin^{d-2} (R_\la^{-1}\|v_i\|)}{\|R_\la^{-1}v_i\|^{d-2}}\, \phi_\la(u,h_i)\, \left(1-\frac{h_i}{R_\la^2}\right)^{d-1} \dint v_i \dint h_i
  \end{align*}
  for $i=2,\ldots,p$ with $\phi_\la(u,h)$ defined at \eqref{eq:DefPhiLambda2Variablen}, see also \cite{CalkaYukich}. The fractions involving the sinus term are bounded from above by $1$. Moreover, one has that $\phi_\la(u,h_i) \le c\, e^{h_i}$ for all $i\in\{1,\ldots,p\}$ and sufficiently large $\la$ as in the proof of Lemma \ref{Diagonal}. This implies that
  \begin{align*}
  \la^p\, \phi(x_1) \ldots \phi(x_p)\, \dint x_1 \ldots \dint x_p \le c^p\, R_\la^{d-1}\, \prod_{i=1}^{p} \left[e^{h_i} \left(1-\frac{h_i}{R_\la^2}\right)^{d-1}\right] \dint h_1 \ldots \dint h_p \dint v_2 \ldots \dint v_p \dint \sigma_{d-1}(u)
  \end{align*}
  and finally 
  { \allowdisplaybreaks
  \begin{align*}
 & \la^p \int\limits_{(\RR^d)^p} \left(\exp(-c_1\, \delta(0,v_2,\ldots,v_p)) + \text{\bf 1}(\delta(0,v_2,\ldots,v_p) \le 2 \max\limits_{i=1,\ldots,p} \{\left|h_i\right|\})\right)\\
  &\qquad \qquad \times \prod_{i = 1}^{p} \left[(1+\left|h_i\right|)^{dw[\xi]} \exp\left(-\frac{e^{h_i\lor 0}}{c_2\, k}\right)\right] \phi(x_1) \ldots \phi(x_p)\, \dint x_1 \ldots \dint x_p\\
  &\le c_1^p\,R_\la^{d-1} \int\limits_{\SSd} \int\limits_{-\infty}^{R_\la^2} \ldots \int\limits_{-\infty}^{R_\la^2}\, \int\limits_{T_\la(\SSd)} \ldots \int\limits_{T_\la(\SSd)}\\
  &\qquad \qquad \times \left(\exp(-c_2\, \delta(0,v_2,\ldots,v_p)) + \text{\bf 1}(\delta(0,v_2,\ldots,v_p) \le 2 \max\limits_{i=1,\ldots,p} \{\left|h_i\right|\})\right)\\
  &\qquad \qquad \times \prod_{i = 1}^{p} \left[(1+\left|h_i\right|)^{dw[\xi]} \exp\left(-\frac{e^{h_i\lor 0}}{c_3\, k}\right) e^{h_i} \left(1-\frac{h_i}{R_\la^2}\right)^{d-1}\right] \dint v_2 \ldots \dint v_p \dint h_1 \ldots \dint h_p \dint \sigma_{d-1}(u)\\
  &\le c_1^p\,R_\la^{d-1} \int\limits_{\SSd} \int\limits_{-\infty}^{R_\la^2} \ldots \int\limits_{-\infty}^{R_\la^2}\, \int\limits_{(\RR^d)^{p-1}} \left(\exp(-c_2\, \delta(0,\text{\bf v})) + \text{\bf 1}(\delta(0,\text{\bf v}) \le 2 \max\limits_{i=1,\ldots,p} \{\left|h_i\right|\})\right)\\
  &\qquad \qquad \times \prod_{i = 1}^{p} \left[(1+\left|h_i\right|)^{dw[\xi]} \exp\left(-\frac{e^{h_i\lor 0}}{c_3\, k}\right) e^{h_i} \left(1-\frac{h_i}{R_\la^2}\right)^{d-1}\right] \dint \text{\bf v} \dint h_1 \ldots \dint h_p \dint \sigma_{d-1}(u)
  \end{align*}}
  with $\text{\bf v}:= (v_2,\ldots,v_p)$. This yields the desired result. 
\end{proof}

The previous lemma shows that we already have separated the crucial prefactor $R_\la^{d-1}$. In the next steps we will appropriately bound the remaining integrals and we start with the inner integral concerning the integration with respect to the vector $\text{\bf v}$. 

\begin{lem}\label{lem:IntDV}
In the situation of Lemma \ref{lem:FirstBoundOffDiag} it holds that 
\begin{align*}
&\int\limits_{(\RR^d)^{p-1}} \left(\exp(-c\, \delta(0,\text{\bf v})) + \text{\bf 1}(\delta(0,\text{\bf v}) \le 2 \max\limits_{i=1,\ldots,p} \{\left|h_i\right|\})\right) \dint \text{\bf v}\\
&\qquad \qquad  \le c_1\,c_2^p\, p^{p-2} \left((dp)! + \Big(\max\limits_{i=1,\ldots,p} \{|h_i|\}\Big)^{(d-1)(p-1)}\right).
\end{align*}
\end{lem}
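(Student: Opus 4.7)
The plan is to exploit the following classical bottleneck characterisation: if we write $v_1 := 0$, then $\delta(0,\bv)$ equals the length of the longest edge in the (Euclidean) minimum spanning tree $\MP(v_1,\ldots,v_p)$ on the points $v_1,\ldots,v_p$. Indeed, for any bipartition $\{S,T\}\preceq\setp$, every spanning tree must contain at least one edge crossing the cut $(S,T)$, which forces $d(\bv_S,\bv_T)\leq \max_{e\in\MP}\|e\|$; conversely, the cut obtained by removing the longest MST edge realises equality by the MST cut property. Thus
\begin{align*}
\int\limits_{(\RR^{d-1})^{p-1}}\bigl(e^{-c\delta(0,\bv)}+{\bf 1}(\delta(0,\bv)\le 2H)\bigr)\dint\bv
\end{align*}
with $H:=\max_{i\le p}|h_i|$ can be bounded by summing over all possible spanning trees.

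Concretely, I would partition the configuration space according to the isomorphism class of the MST: for every labeled tree $T$ on the vertex set $\{1,\ldots,p\}$, let $E_T$ denote the set of $\bv$ for which $\MP(0,v_2,\ldots,v_p)=T$. Then the $E_T$'s cover $(\RR^{d-1})^{p-1}$ up to a Lebesgue-null set, and on $E_T$ one has $\delta(0,\bv)=\max_{(i,j)\in T}\|v_i-v_j\|$. Root each $T$ at the vertex $1$ and perform the unimodular change of variables $w_i:=v_i-v_{\mathrm{par}(i)}$ for $i=2,\ldots,p$. In these coordinates $\delta(0,\bv)$ is dominated by $\max_{2\le i\le p}\|w_i\|$, and the $w_i$ range freely over $\RR^{d-1}$. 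By Cayley's formula the number of such labeled rooted trees is $p^{p-2}$, hence
\begin{align*}
\int\limits_{(\RR^{d-1})^{p-1}}\!\!\bigl(e^{-c\delta(0,\bv)}+{\bf 1}(\delta(0,\bv)\le 2H)\bigr)\dint\bv
&\le p^{p-2}\!\int\limits_{(\RR^{d-1})^{p-1}}\!\!\bigl(e^{-c\max_i\|w_i\|}+{\bf 1}(\max_i\|w_i\|\le 2H)\bigr)\dint\bw.
\end{align*}

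The two remaining integrals are elementary. For the indicator term, $\{\max_i\|w_i\|\le 2H\}$ is a product of $(p-1)$ Euclidean balls of radius $2H$ in $\RR^{d-1}$, so its volume is $(\kappa_{d-1}(2H)^{d-1})^{p-1}\le c_1^p H^{(d-1)(p-1)}$. For the exponential term, the layer-cake formula together with $\mathrm{vol}\{\max_i\|w_i\|\le M\}=(\kappa_{d-1}M^{d-1})^{p-1}$ yields
\begin{align*}
\int\limits_{(\RR^{d-1})^{p-1}}e^{-c\max_i\|w_i\|}\dint\bw
=\kappa_{d-1}^{p-1}\int_0^\infty c\,e^{-cM}M^{(d-1)(p-1)}\dint M\,
\frac{1}{c^{(d-1)(p-1)}}\cdot c^{(d-1)(p-1)}
\end{align*}
which, after the substitution $s=cM$ and recognising the gamma integral, gives $\kappa_{d-1}^{p-1}c^{-(d-1)(p-1)}((d-1)(p-1))!\le c_2^p\,(dp)!$ since $(d-1)(p-1)\le dp$. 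Summing the two bounds and combining with the Cayley factor $p^{p-2}$ produces the claimed estimate with explicit constants $c_1,c_2$ depending only on $d$ and $c$.

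The main obstacle is cleanly justifying the bottleneck identity $\delta(0,\bv)=\max_{e\in\MP}\|e\|$ and the decomposition into the events $E_T$; once this structural observation is in place, everything reduces to the standard Cayley counting argument combined with two one-line volume computations. A minor technical point is that the MST may fail to be unique on a Lebesgue-null set, but this is harmless for the integral bound.
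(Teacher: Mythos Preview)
Your argument is correct and essentially coincides with the paper's: both rest on the tree-counting volume bound $\mathrm{vol}\{\bv:\delta(0,\bv)<t\}\le p^{p-2}\kappa_{d-1}^{p-1}t^{(d-1)(p-1)}$ followed by a gamma integral, the paper quoting this bound from \cite{GroteThäle} (Lemma~5.9 there) while you re-derive it explicitly via the MST bottleneck identity and Cayley's formula. One wording fix: on $E_T$ you actually have $\delta(0,\bv)=\max_i\|w_i\|$ (equality, not merely ``dominated by''), and equality is exactly what you need for the exponential part, since $\delta\le\max_i\|w_i\|$ alone would make $e^{-c\delta}\ge e^{-c\max_i\|w_i\|}$, i.e.\ the inequality would go the wrong way.
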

\begin{proof}
We divide the proof into two parts. At first we consider the integral regarding the exponential function. Using that
\begin{align*}
\int\limits_{\delta(0,\text{\bf v})}^{\infty} \exp(-c\, t) \,\dint t = \frac{1}{c}\, \exp(- c\, \delta(0,\text{\bf v}))
\end{align*}
together with Fubini`s theorem we obtain
\begin{align*}
\int\limits_{(\RR^d)^{p-1}} \exp(-c\, \delta(0,\text{\bf v})) \, \dint \text{\bf v} &= c \int\limits_{(\RR^d)^{p-1}}\, \int\limits_{\delta(0,\text{\bf v})}^{\infty} \exp(-c\, t) \, \dint t \dint \text{\bf v} \\
&= c  \int\limits_{0}^{\infty} \int\limits_{\{\delta(0,\text{\bf v}) < t\}} \dint \text{\bf v} \exp(-c\, t) \, \dint t\,.
\end{align*}
Similar computations as in the proof of Lemma 5.9 in \cite{GroteThäle} give
\begin{align}\label{Baum}
\int\limits_{\{\delta(0,\text{\bf v}) < t\}} \dint \text{\bf v} \le p^{p-2}\, \kappa_{d-1}^{p-1}\, t^{(d-1)(p-1)}\,.
\end{align}
Furthermore, integration by parts shows that
\begin{align*}
\int\limits_0^\infty t^{(d-1)(p-1)} \exp(-c\, t) \,\dint t = \frac{((d-1)(p-1))!}{c^{(d-1)(p-1) + 1}} \le c_1\, c_2^p\, (dp)!
\end{align*}
and this leads to
\begin{align*}
\int\limits_{(\RR^d)^{p-1}} \exp(-c_1\, \delta(0,\text{\bf v})) \, \dint \text{\bf v} &\le c_2\, p^{p-2}\, \kappa_{d-1}^{p-1}\, \int\limits_0^\infty t^{(d-1)(p-1)} \exp(-c_3\, t) \, \dint t\\
&\le c_1\, c_2^p\, \kappa_{d-1}^{p-1}\, p^{p-2}\, (dp)!\,.
\end{align*}
For the second part of the integral we analyze one has, again by \eqref{Baum}, that
\begin{align*}
\int\limits_{(\RR^d)^{p-1}} \text{\bf 1}(\delta(0,\text{\bf v}) \le 2 \max\limits_{i=1,\ldots,p} \{\left|h_i\right|\})\, \dint \text{\bf v}
&=\int\limits_{\{\delta(0,{\bf v})\leq 2\max_{i=1,\ldots,p}\{|h_i|\}\}}\dint{\bf v}\\
&\le c_1\,c_2^p\, p^{p-2}\, \kappa_{d-1}^{p-1}\, \left(\max\limits_{i=1,\ldots,p} \{\left|h_i\right|\}\right)^{(d-1)(p-1)}\,.
\end{align*} 
Combining both estimates gives the result. 
\end{proof}

The two last lemmas show that we are left with the bound
\begin{align}\label{eq:UpperBoundT4T6}
 \bigg|\, \sum_{S,T\, \preceq\setk}\, \int\limits_{\delta(\{S,T\})} f_{R_\la}^k\ \dint c_\lambda^k \bigg| \le T_5 + T_6\,,
 \end{align}
 where the terms $T_5$ and $T_6$ are given by
 \begin{align}
 \begin{split}\label{Leftover1}
 T_5 &:= c_1\, c_2^k\  \|f\|_\infty^k\, R_\la^{d-1}\, k\, k!\,(dk)!\, (ku[\xi])!\,  ((kdv[\xi])!)^{2} k^{k-2}  \sum_{L_1,\ldots,L_p\, \preceq\setk}\, \int\limits_{\SSd} \int\limits_{-\infty}^{R_\la^2} \ldots \int\limits_{-\infty}^{R_\la^2}\,\\
 &\qquad  \times \prod_{i = 1}^{p} \left[(1+\left|h_i\right|)^{dw[\xi]} \exp\left(-\frac{e^{h_i\lor 0}}{c_3\ k}\right) e^{h_i} \left(1-\frac{h_i}{R_\la^2}\right)^{d-1}\right] \dint h_1 \ldots \dint h_p \dint \sigma_{d-1}(u)\,,
 \end{split}
 \end{align}
 and
 \begin{align}
 \begin{split}\label{Leftover2}
 T_6 &:= c_1\, c_2^k\ \|f\|_\infty^k\, R_\la^{d-1}\, k\, k!\,(ku[\xi])!\,  ((kdv[\xi])!)^{2}\, k^{k-2} \\
 &\qquad\times\sum_{L_1,\ldots,L_p\, \preceq\setk}\, \int\limits_{\SSd} \int\limits_{-\infty}^{R_\la^2} \ldots \int\limits_{-\infty}^{R_\la^2}\,\left(\max\limits_{i=1,\ldots,p} \{\left|h_i\right|\}\right)^{(d-1)(p-1)} \\
 &\qquad \times \prod_{i = 1}^{p} \left[(1+\left|h_i\right|)^{dw[\xi]} \exp\left(-\frac{e^{h_i\lor 0}}{c_3\, k}\right) e^{h_i} \left(1-\frac{h_i}{R_\la^2}\right)^{d-1}\right] \dint h_1 \ldots \dint h_p \dint \sigma_{d-1}(u)\,.
\end{split}
\end{align}
Here, we used in several places that $p\leq k$.

\begin{lem}\label{I}
	For $T_5$ as defined by \eqref{Leftover1} we have that
	\begin{align*}
	|T_5| \le c_1\, c_2^k\, \|f\|_\infty^k\, R_\la^{d-1}\, (dk)!\, (ku[\xi])!\, (k!)^2\, ((kdv[\xi])!)^{2}\,  k^{3k}\,.
	\end{align*}
\end{lem}
\begin{proof}
We start with the integral concerning the coordinate $h_1$. Similarly to the computations performed in the proof of Lemma \ref{Diagonal} and by using \eqref{InequalityExponential} we see that
{ \allowdisplaybreaks
\begin{align*}
 &\int\limits_{-\infty}^{R_\la^2}  (1+|h_1|)^{dw[\xi]}\, \exp\left(-\frac{e^{h_1\lor 0}}{c_1\, k}\right) e^{h_1} \left(1-\frac{h_1}{R_\la^2}\right)^{d-1} \dint h_1\\
 &\qquad =  \int\limits_{-\infty}^{0}  (1+|h_1|)^{dw[\xi]}\, \exp\left(-\frac{e^{h_1\lor 0}}{c_1\, k}\right) e^{h_1} \left(1-\frac{h_1}{R_\la^2}\right)^{d-1} \dint h_1\\
 &\qquad \qquad + \int\limits_{0}^{R_\la^2}\,  (1+|h_1|)^{dw[\xi]} \exp\left(-\frac{e^{h_1\lor 0}}{c_2\, k}\right) e^{h_1} \left(1-\frac{h_1}{R_\la^2}\right)^{d-1} \dint h_1\\
 &\qquad \le  \int\limits_{-\infty}^{0}  (1+|h_1|)^{dw[\xi]}\,  e^{h_1} \left(1-\frac{h_1}{R_\la^2}\right)^{d-1} \dint h_1 + \int\limits_{0}^{R_\la^2}  (1+h_1)^{dw[\xi]}\, \exp\left(-\frac{e^{h_1}}{c_1\, k}\right) e^{h_1}  \dint h_1\\
&\qquad \le  \int\limits_{0}^{\infty}  (1+h_1)^{dw[\xi] + d - 1}\,  e^{-h_1} \dint h_1 + \int\limits_{0}^{\infty}  (1+h_1)^{dw[\xi]}\, \frac{2\, c_1^2\, k^2}{e^{2h_1}}\, e^{h_1}  \dint h_1\\ 
&\qquad = \int\limits_{0}^{\infty}  (1 + h_1)^{dw[\xi] + d - 1}\,  e^{-h_1} \dint h_1 + c_1\, k^2  \int\limits_{0}^{\infty}  (1+h_1)^{dw[\xi]}\,  e^{-h_1}  \dint h_1\\
&\qquad \le c_1\, (dw[\xi] + d)! + c_2\, k^2\, (dw[\xi]+1)!  \\
&\qquad= c_1 + c_2\, k^2 \le c_3\, k^2\,.
\end{align*}}
Now, we have $p-1$ further height coordinates $h_2,\ldots,h_p$. The last computation shows that, up to constants, we get an additional factor $k^2$ for each of these integrals so that the integration with respect to all height coordinates is bounded by a constant times $k^{2p}$. In view of the definition in \eqref{Leftover1} and by \eqref{eq:BoundStirlingPartitions} this leads to 
\begin{align*}
|T_5| &\le c_1\, c_2^k\, \|f\|_\infty^k\, R_\la^{d-1}\, k\, k!\,(dk)!\, (ku[\xi])!\,  ((kdv[\xi])!)^{2}\, k^{k-2}\,  k^{2p} \int\limits_{\SSd} \dint \sigma_{d-1}(u)\, \sum_{L_1,\ldots,L_p\, \preceq\setk}\, 1\\
&\le c_1\, c_2^k\, \|f\|_\infty^k\, R_\la^{d-1}\, (dk)!\, (ku[\xi])!\, (k!)^2\, ((kdv[\xi])!)^{2}\,  k^{3k}\,.
\end{align*}
This completes the proof.
\end{proof}

Next, we turn to the second summand $T_6$ in \eqref{eq:UpperBoundT4T6} involving the max-term. 

\begin{lem}\label{II}
For $T_6$ defined by \eqref{Leftover2} we have that
\begin{align*}
|T_6| \le c_1\, c_2^k\, \|f\|_\infty^k\, R_\la^{d-1}\, (dk)!\, (ku[\xi])!\, (k!)^2\, ((kdv[\xi])!)^{2}\,  k^{3k}\,.
\end{align*}
\end{lem}
\begin{proof}
As in the proof of Lemma \ref{I} we start with the integral with respect to the variable $h_1$. Putting $a:= \max\{|h_2|,\ldots,|h_p|\}$ we can rewrite this integral as
{ \allowdisplaybreaks
\begin{align*}
&\int\limits_{-\infty}^{R_\la^2}  (1+|h_1|)^{dw[\xi]}\, \exp\left(-\frac{e^{h_1\lor 0}}{c_1\, k}\right) e^{h_1} \left(1-\frac{h_1}{R_\la^2}\right)^{d-1} \left(\max \{\left|h_1\right|,\ldots,|h_p|\}\right)^{(d-1)(p-1)}  \dint h_1\\
& = \int\limits_{-\infty}^{0}  (1+|h_1|)^{dw[\xi]}\, \exp\left(-\frac{e^{h_1\lor 0}}{c_1\, k}\right) e^{h_1} \left(1-\frac{h_1}{R_\la^2}\right)^{d-1} \left(\max \{\left|h_1\right|,a\}\right)^{(d-1)(p-1)}  \dint h_1\\
&\qquad  + \int\limits_{0}^{R_\la^2}  (1+|h_1|)^{dw[\xi]}\, \exp\left(-\frac{e^{h_1\lor 0}}{c_2\, k}\right) e^{h_1} \left(1-\frac{h_1}{R_\la^2}\right)^{d-1} \left(\max \{\left|h_1\right|,a\}\right)^{(d-1)(p-1)}  \dint h_1\\
& \le \int\limits_{-\infty}^{0}  (1+|h_1|)^{dw[\xi]}\,  e^{h_1} \left(1-\frac{h_1}{R_\la^2}\right)^{d-1} \left(\max \{\left|h_1\right|,a\}\right)^{(d-1)(p-1)}  \dint h_1\\
&\qquad  + \int\limits_{0}^{R_\la^2}  (1+h_1)^{dw[\xi]}\, \exp\left(-\frac{e^{h_1}}{c_1\, k}\right) e^{h_1} \left(\max \{\left|h_1\right|,a\}\right)^{(d-1)(p-1)}  \dint h_1\\
& = \int\limits_{0}^{\infty}  (1+h_1)^{dw[\xi] + d - 1}\,  e^{-h_1}  \left(\max \{\left|h_1\right|,a\}\right)^{(d-1)(p-1)}  \dint h_1\\
&\qquad + \int\limits_{0}^{\infty}  (1+h_1)^{dw[\xi]}\, \exp\left(-\frac{e^{h_1}}{c_1\, k}\right) e^{h_1} \left(\max \{\left|h_1\right|,a\}\right)^{(d-1)(p-1)}  \dint h_1\\
&  =: T_7 + T_8\,.
\end{align*}}
Both terms, $T_7$ and $T_8$, will be treated separately. Since
\begin{align*}
dw[\xi] + (d-1) + (d-1)(p-1) = dw[\xi] + (d-1)p \le dw[\xi] + dk\,,
\end{align*}
we obtain together with \eqref{UngleichungFakultät2} that
\begin{align*}
T_7 &= \int\limits_{0}^{\infty}  (1+h_1)^{dw[\xi] + d -1}\,  e^{-h_1}  \left(\max \{\left|h_1\right|,a\}\right)^{(d-1)(p-1)}  \dint h_1\\
&= \int\limits_{0}^{a}  (1+h_1)^{dw[\xi] + d -1}\,  e^{-h_1}  a^{(d-1)(p-1)}\,  \dint h_1 + \int\limits_{a}^{\infty}  (1+h_1)^{dw[\xi] + d -1}\,  e^{-h_1}  h_1^{(d-1)(p-1)}\,  \dint h_1\\
&\leq a^{(d-1)(p-1)}  \int\limits_{0}^{a}  (1+h_1)^{dw[\xi] + d -1}\,  e^{-h_1}   \dint h_1 + \int\limits_{a}^{\infty}  (1+h_1)^{dw[\xi] + d -1}\,  e^{-h_1}  (1 + h_1)^{(d-1)(p-1)}\,  \dint h_1\\
&\le a^{(d-1)(p-1)}  \int\limits_{0}^{\infty}  (1+h_1)^{dw[\xi] + d -1}\,  e^{-h_1}   \dint h_1 + \int\limits_{0}^{\infty}  (1+h_1)^{dw[\xi] + dk}\,  e^{-h_1}  \dint h_1\\
&\le c_1\, (dw[\xi] + d)!\, a^{(d-1)(p-1)} + c_2\, (dw[\xi] + dk)! \\
&\le c_3\, a^{(d-1)(p-1)} + c_4\, c_5^k\, (dk)!\,.
\end{align*}
Moreover, applying \eqref{InequalityExponential} in the second, the estimate $dw[\xi] + (d-1)(p-1) \le dw[\xi] + dk$ in the fourth and \eqref{UngleichungFakultät2} in the last step, we conclude that
\begin{align*}
T_8 &= \int\limits_{0}^{\infty}  (1+h_1)^{dw[\xi]}\, \exp\left(-\frac{e^{h_1}}{c_1\, k}\right) e^{h_1} \left(\max \{\left|h_1\right|,a\}\right)^{(d-1)(p-1)}  \dint h_1\\
&\le c_1\, k^2 \int\limits_{0}^{\infty}  (1+h_1)^{dw[\xi]}\,  e^{-h_1} \left(\max \{\left|h_1\right|,a\}\right)^{(d-1)(p-1)}  \dint h_1\\
&= c_1\, k^2 \int\limits_{0}^{a}  (1+h_1)^{dw[\xi]}\, e^{-h_1} a^{(d-1)(p-1)}\,  \dint h_1 + c_2\, k^2 \int\limits_{a}^{\infty}  (1+h_1)^{dw[\xi]}\, e^{-h_1} h_1^{(d-1)(p-1)}\,  \dint h_1\\
&\le c_1\, k^2\, a^{(d-1)(p-1)} \int\limits_{0}^{\infty}  (1+h_1)^{dw[\xi]}\,  e^{-h_1}  \dint h_1 + c_2\, k^2 \int\limits_{0}^{\infty}  (1+h_1)^{dw[\xi] + dk}\,  e^{-h_1}  \dint h_1\\
&\le c_1\, k^2\, (dw[\xi])!\, a^{(d-1)(p-1)} + c_2\, k^2\, (dw[\xi] + dk)! \\
&\le c_3\, k^2\, a^{(d-1)(p-1)} + c_4\, c_5^k\, k^2\, (dk)!\,.
\end{align*}
Combining the two bounds for $T_7$ and $T_8$ we arrive at 
\begin{align*}
&\int\limits_{-\infty}^{R_\la^2}  (1+|h_1|)^{dw[\xi]} \exp\left(-\frac{e^{h_1\lor 0}}{c_1\, k}\right) e^{h_1} \left(1-\frac{h_1}{R_\la^2}\right)^{d-1} \left(\max \{\left|h_1\right|,\ldots,|h_p|\}\right)^{(d-1)(p-1)}  \dint h_1\\
&\qquad \le c_1\, a^{(d-1)(p-1)} + c_2\, c_3^k\, (dk)! + c_4\, k^2\, a^{(d-1)(p-1)} + c_5\, c_6^k\, k^2\, (dk)!\\
&\qquad \le c_1\, k^2\, \left(\max\{|h_2|,\ldots,|h_p|\}\right)^{(d-1)(p-1)} + c_2\, c_3^k\,  k^2\, (dk)!\,.
\end{align*}
For the term $T_6$ defined at \eqref{Leftover2} this implies that
\begin{align*}
&|T_6| \le c_1\, c_2^k\, \|f\|_\infty^k\, R_\la^{d-1}\, k!\,  (dk)!\, (ku[\xi])!\, ((kdv[\xi])!)^{2}\, k^{k-1}\, k^2 \sum_{L_1,\ldots,L_p\,  \preceq\setk}\, \int\limits_{\SSd} \int\limits_{-\infty}^{R_\la^2} \ldots \int\limits_{-\infty}^{R_\la^2}\,\\
&\qquad \qquad \times \prod_{i = 2}^{p} \left[(1+\left|h_i\right|)^{dw[\xi]} \exp\left(-\frac{e^{h_i\lor 0}}{c_3\, k}\right) e^{h_i} \left(1-\frac{h_i}{R_\la^2}\right)^{d-1}\right] \dint h_2 \ldots \dint h_p \dint \sigma_{d-1}(u)\\
&\qquad + c_4\, c_5^k\, \|f\|_\infty^k\, R_\la^{d-1}\, k!\, (ku[\xi])!\,  ((kdv[\xi])!)^{2}\, k^{k-1}\, k^2\\
&\qquad\qquad\times \sum_{L_1,\ldots,L_p\,  \preceq\setk}\, \int\limits_{\SSd} \int\limits_{-\infty}^{R_\la^2} \ldots \int\limits_{-\infty}^{R_\la^2}\,\left(\max\limits_{i=2,\ldots,p} \{\left|h_i\right|\}\right)^{(d-1)(p-1)} \\
&\qquad \qquad \times  \prod_{i = 2}^{p} \left[(1+\left|h_i\right|)^{dw[\xi]} \exp\left(-\frac{e^{h_i\lor 0}}{c_6\, k}\right) e^{h_i} \left(1-\frac{h_i}{R_\la^2}\right)^{d-1}\right] \dint h_2 \ldots \dint h_p \dint \sigma_{d-1}(u)\,.
\end{align*} 
The first summand is almost the same as in Lemma \ref{I}. The only difference is that there are $p-1$ further integrals concerning the height coordinates $h_2,\ldots,h_p$ left. Carrying out the integrations as above, this yields another factor $k^{2(p-1)}$, up to constants. Furthermore, by computations similarly to those performed above we see that
\begin{align*}
&\int\limits_{-\infty}^{R_\la^2}\, (1+\left|h_2\right|)^{dw[\xi]} \exp\left(-\frac{e^{h_2\lor 0}}{c_1\, k}\right) e^{h_2} \left(1-\frac{h_2}{R_\la^2}\right)^{d-1} \left(\max \{\left|h_2\right|,\ldots,|h_p|\}\right)^{(d-1)(p-1)} \dint h_2\\
&\qquad \le c_1\, k^2\, \left(\max\{|h_3|,\ldots,|h_p|\}\right)^{(d-1)(p-1)} + c_2\, c_3^k\, k^2\, (dk)!\,,
\end{align*}
giving the bound 
\begin{align*}
&|T_6| \le c_1\, c_2^k\, \|f\|_\infty^k\, R_\la^{d-1}\, (dk)!\, (ku[\xi])!\, (k!)^2\, ((kdv[\xi])!)^{2}\,  k^{3k}\\
& + c_3\, c_4^k\, \|f\|_\infty^k\, R_\la^{d-1}\, k!\, (dk)!\, (ku[\xi])!\, ((kdv[\xi])!)^{2}\, k^{k-1}\, k^4 \sum_{L_1,\ldots,L_p\, \preceq\setk}\, \int\limits_{\SSd} \int\limits_{-\infty}^{R_\la^2} \ldots \int\limits_{-\infty}^{R_\la^2}\,\\
&\qquad \qquad \times \prod_{i = 3}^{p} \left[(1+\left|h_i\right|)^{dw[\xi]} \exp\left(-\frac{e^{h_i\lor 0}}{c_5\, k}\right) e^{h_i} \left(1-\frac{h_i}{R_\la^2}\right)^{d-1}\right] \dint h_3 \ldots \dint h_p \dint \sigma_{d-1}(u)\\
& + c_6\, c_7^k\, \|f\|_\infty^k\, R_\la^{d-1}\, k!\, (ku[\xi])!\, ((kdv[\xi])!)^{2}\, k^{k-1}\, k^4 \\
&\qquad\qquad\times \sum_{L_1,\ldots,L_p\,\preceq\setk}\, \int\limits_{\SSd} \int\limits_{-\infty}^{R_\la^2} \ldots \int\limits_{-\infty}^{R_\la^2} \left(\max\limits_{i=3,\ldots,p} \{\left|h_i\right|\}\right)^{(d-1)(p-1)} \\
&\qquad \qquad \times  \prod_{i = 3}^{p} \left[(1+\left|h_i\right|)^{dw[\xi]} \exp\left(-\frac{e^{h_i\lor 0}}{c_8\, k}\right) e^{h_i} \left(1-\frac{h_i}{R_\la^2}\right)^{d-1}\right] \dint h_3 \ldots \dint h_p \dint \sigma_{d-1}(u)\,.
\end{align*} 
Repeating this procedure $p-2$ further times yields
 \begin{align*}
& |T_6| \le c_1\, c_2^k\, \|f\|_\infty^k\, R_\la^{d-1}\, (dk)!\, (ku[\xi])!\, (k!)^2\, ((kdv[\xi])!)^{2}\,  k^{3k}\\
& + c_3\, c_4^k\,  \|f\|_\infty^k\, R_\la^{d-1}\, k!\, (dk)!\, (ku[\xi])!\, ((kdv[\xi])!)^{2}\, k^{k-1}\, k^{2(p-1)} \sum_{L_1,\ldots,L_p\, \preceq\setk}\, \int\limits_{\SSd} \int\limits_{-\infty}^{R_\la^2}\\
 & \qquad \qquad \times (1+\left|h_p\right|)^{dw[\xi]} \exp\left(-\frac{e^{h_p\lor 0}}{c_5\, k}\right) e^{h_p} \left(1-\frac{h_p}{R_\la^2}\right)^{d-1} \dint h_p \dint \sigma_{d-1}(u)\\
 & + c_6\, c_7^k\, \|f\|_\infty^k\, R_\la^{d-1}\, k!\, (ku[\xi])!\, ((kdv[\xi])!)^{2}\, k^{k-1}\, k^{2(p-1)} \sum_{L_1,\ldots,L_p\, \preceq\setk}\, \int\limits_{\SSd} \int\limits_{-\infty}^{R_\la^2}\, \left|h_p\right|^{(d-1)(p-1)} \\
 &\qquad \qquad \times (1+\left|h_p\right|)^{dw[\xi]} \exp\left(-\frac{e^{h_p\lor 0}}{c_8\, k}\right) e^{h_p} \left(1-\frac{h_p}{R_\la^2}\right)^{d-1} \dint h_p \dint \sigma_{d-1}(u)\,.
 \end{align*} 
The integral concerning $h_p$ in the second summand is bounded by a constant times $k^2$ as we have already seen in Lemma \ref{I}.
For the third summand it follows by a similar reasoning as above that
\begin{align*}
&\int\limits_{-\infty}^{R_\la^2}\, (1+\left|h_p\right|)^{dw[\xi]} \exp\left(-\frac{e^{h_p\lor 0}}{c_1\, k}\right) e^{h_p} \left(1-\frac{h_p}{R_\la^2}\right)^{d-1} \left|h_p\right|^{(d-1)(p-1)} \dint h_p\\
& \le  \int\limits_{0}^{\infty}\, (1+ h_p)^{dw[\xi] + dk}\,  e^{- h_p} \dint h_p  + c_1\, k^2 \int\limits_{0}^{\infty}\, (1+ h_p)^{dw[\xi] + dk}\,  e^{-h_p} \,\dint h_p\\
& \le c_1\, (dw[\xi] + dk)! + c_2\, k^2\, (dw[\xi] + dk)! \le c_3\, c_4^k\, k^2\, (dk)!\,.
\end{align*}
This completes the proof.
\end{proof}

We can now combine the two previous lemmas into a bound for the off-diagonal term that complements the diagonal bound in Lemma \ref{Diagonal}.

\begin{lem}\label{off-diagonal}
Let $\xi\in\Xi$ and $f\in\cB(\RR^d)$. Then,
\begin{align*}
\begin{split}
&\Bigg|\sum_{S,T\, \preceq\setk}\, \int\limits_{\sigma(\{S,T\})} f_{R_\la}^k\, \dint c_\la^k\, \Bigg| 
\le c_1\, c_2^k\, \|f\|_\infty^k\, R_\la^{d-1}\, (dk)!\, (ku[\xi])!\, (k!)^2\, ((kdv[\xi])!)^{2}\,  k^{3k}\,.
\end{split}
\end{align*}
\end{lem}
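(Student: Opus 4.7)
The plan is to simply combine the bounds already established in Lemma \ref{I} and Lemma \ref{II} via the decomposition \eqref{eq:UpperBoundT4T6}. Indeed, Lemma \ref{lem:FirstBoundOffDiag} together with the volume estimate from Lemma \ref{lem:IntDV} yields
\begin{align*}
\Bigg|\sum_{S,T\, \preceq\setk}\, \int\limits_{\sigma(\{S,T\})} f_{R_\l}^k\, \dint c_\l^k\, \Bigg| \le T_5 + T_6\,,
\end{align*}
with $T_5$ and $T_6$ as defined in \eqref{Leftover1} and \eqref{Leftover2}, respectively. The first term corresponds to integrating the exponentially decaying separation factor $\exp(-c\,\delta(0,\mathbf{v}))$, while the second arises from the indicator $\mathbf{1}(\delta\le 2\max_i|h_i|)$ introduced by the absence of a global spatial localization property (cf.\ the discussion after Lemma \ref{Eigenschaften}).

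By Lemma \ref{I}, we have
\begin{align*}
|T_5| \le c_1\, c_2^k\, \|f\|_\infty^k\, R_\l^{d-1}\, (dk)!\, (ku[\xi])!\, (k!)^2\, ((kdv[\xi])!)^{2}\,  k^{3k}\,,
\end{align*}
and by Lemma \ref{II} we have the identical bound
\begin{align*}
|T_6| \le c_3\, c_4^k\, \|f\|_\infty^k\, R_\l^{d-1}\, (dk)!\, (ku[\xi])!\, (k!)^2\, ((kdv[\xi])!)^{2}\,  k^{3k}\,.
\end{align*}
Adding these two inequalities and absorbing the sum of the two constants into a single pair of constants $c_1$ and $c_2$ (depending only on $d$ and $\xi$) gives the assertion. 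Since Lemmas \ref{I} and \ref{II} have already carried out the delicate height-integration arguments, no further estimation is required here; the lemma is essentially a bookkeeping step that consolidates the off-diagonal contribution into a single, uniform bound that can then be combined with the diagonal estimate from Lemma \ref{Diagonal}. There is no substantive obstacle at this stage, since the heavy lifting has been done in the preceding lemmas.
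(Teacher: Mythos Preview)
Your proof is correct and follows exactly the same approach as the paper: invoke the decomposition \eqref{eq:UpperBoundT4T6} into $T_5+T_6$, then apply Lemma \ref{I} and Lemma \ref{II} to each piece and absorb the constants. The paper's proof is essentially the same two-line bookkeeping argument you give.
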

\begin{proof}
Recalling \eqref{eq:UpperBoundT4T6}, we have that
\begin{align*}
\Bigg|\sum_{S,T\, \preceq\setk}\, \int\limits_{\sigma(\{S,T\})} f_{R_\la}^k\, \dint c_\la^k\, \Bigg| \le |T_5| + |T_6|
\end{align*}
with the terms $T_5$ and $T_6$ defined by \eqref{Leftover1} and \eqref{Leftover2}, respectively. Applying now Lemma \ref{I} to $T_5$ and Lemma \ref{II} to $T_6$ yields the desired upper bound.
\end{proof}

What is left is to combine the two estimates for the on- and the off-diagonal term to complete the proof of the cumulant bound.

\begin{proof}[Proof of Theorem \ref{cumulantestimate}.]
From Lemma \ref{Diagonal} and Lemma \ref{off-diagonal} we deduce the bound
\begin{equation}\label{eq:FinalProof1}
\begin{split}
\big|\big\langle f_{R_\la}^k,c_\lambda^k\big\rangle\big| &\leq \Bigg|\int\limits_\Delta f_{R_\la}^k\, \dint c_\lambda^k\Bigg| + \Bigg|\sum_{S,T\, \preceq\setk}\, \int\limits_{\delta(\{S,T\})} f_{R_\la}^k\, \dint c_\lambda^k\Bigg|\\
&\le c_1\, c_2^k\, \|f\|_\infty^k\, R_\la^{d-1}\,  (ku[\xi])!\, ((kdv[\xi])!)^{3} \\
&\qquad\qquad+ c_3\, c_4^k\, \|f\|_\infty^k\, R_\la^{d-1}\, (dk)!\, (ku[\xi])!\, (k!)^2\, ((kdv[\xi])!)^{2}\,  k^{3k}\,,
\end{split}
\end{equation}
which holds for sufficiently large $\lambda$. Now, we notice that the elementary inequality $\ell^\ell \leq \ell!\, e^{3\ell}$, valid for all $\ell\in \{3,4,\ldots\}$, leads to
\begin{align*}
k^{3k} \leq (k!)^3\, e^{9k}
\end{align*} 
and we also observe that Stirling's formula gives
\begin{align*}
(kdv[\xi])! &\leq e(kdv[\xi])^{kdv[\xi]+\frac{1}{2}} e^{-kdv[\xi]} = e\sqrt{kdv[\xi]}e^{-kdv[\xi]}(dv[\xi])^{kdv[\xi]}(k^k)^{dv[\xi]}\\
&\leq e\sqrt{kdv[\xi]}e^{-kdv[\xi]}(dv[\xi])^{kdv[\xi]}e^{3kdv[\xi]}(k!)^{dv[\xi]}
\le ((dv[\xi])^{dv[\xi]}e^{4dv[\xi]})^k (k!)^{dv[\xi]}\,.
\end{align*} 
Combined with \eqref{eq:FinalProof1} this implies that
\begin{align*}
\big|\big\langle f_{R_\la}^k,c_\lambda^k\big\rangle\big| &\le c_1\, c_2^k\, \|f\|_\infty^k\, R_\la^{d-1}\, (k!)^{u[\xi] + 3dv[\xi]} + c_3\, c_4^k\, \|f\|_\infty^k\, R_\la^{d-1}\, (k!)^{d + u[\xi] + 5 + 2dv[\xi]}\\
&\le c_1\, c_2^k\, \|f\|_\infty^k\, R_\la^{d-1}\, (k!)^{3dv[\xi] + u[\xi] + 5 + z[\xi]}
\end{align*}
for all sufficiently large $\la$, where we recall that $z[\xi]$ is $d$ if $\xi = \xi_{f_0}$ and zero otherwise. This completes the proof of the cumulant bound.
\end{proof}

\subsubsection*{Acknowledgement}
We would like to thank Pierre Calka (Rouen) and Matthias Reitzner (Osnabr\"uck) for stimulating and enlightening discussions. We also thank two anonymous referees for their comments and remarks that helped us to improve the presentation of our results.


\end{document}